\newcolumntype{L}{>{$}l<{$}}
\newcolumntype{D}{>{\centering\arraybackslash}p{3.5cm}}
\newcolumntype{C}{>{$}c<{$}}
\begin{document}

\newcommand{\bea}{\begin{eqnarray}}
\newcommand{\ena}{\end{eqnarray}}
\newcommand{\beas}{\begin{eqnarray*}}
\newcommand{\enas}{\end{eqnarray*}}
\newcommand{\beq}{\begin{equation}}
\newcommand{\enq}{\end{equation}}
\def\qed{\hfill \mbox{\rule{0.5em}{0.5em}}}
\newcommand{\bbox}{\hfill $\Box$}
\newcommand{\ignore}[1]{}
\newcommand{\ignorex}[1]{#1}
\newcommand{\wtilde}[1]{\widetilde{#1}}
\newcommand{\mq}[1]{\mbox{#1}\quad}
\newcommand{\bs}[1]{\boldsymbol{#1}}
\newcommand{\qmq}[1]{\quad\mbox{#1}\quad}
\newcommand{\qm}[1]{\quad\mbox{#1}}
\newcommand{\nn}{\nonumber}
\newcommand{\Bvert}{\left\vert\vphantom{\frac{1}{1}}\right.}
\newcommand{\To}{\rightarrow}
\newcommand{\supp}{\mbox{supp}}
\newcommand{\law}{{\cal L}}
\newcommand{\Z}{\mathbb{Z}}
\newcommand{\mc}{\mathcal}
\newcommand{\mbf}{\mathbf}
\newcommand{\tbf}{\textbf}
\newcommand{\lp}{\left(}
\newcommand{\limm}{\lim\limits}
\newcommand{\limminf}{\liminf\limits}
\newcommand{\limmsup}{\limsup\limits}
\newcommand{\rp}{\right)}
\newcommand{\mbb}{\mathbb}
\newcommand{\rainf}{\rightarrow \infty}
\newtheorem{problem}{Problem}[section]
\newtheorem{exercise}{Exercise}[section]
\newtheorem{theorem}{Theorem}[section]
\newtheorem{corollary}{Corollary}[section]
\newtheorem{conjecture}{Conjecture}[section]
\newtheorem{proposition}{Proposition}[section]
\newtheorem{lemma}{Lemma}[section]
\newtheorem{definition}{Definition}[section]
\newtheorem{example}{Example}[section]
\newtheorem{remark}{Remark}[section]
\newtheorem{solution}{Solution}[section]
\newtheorem{case}{Case}[section]
\newtheorem{condition}{Condition}[section]
\newtheorem{assumption}{Assumption}
\newtheorem{defn}{Definition}[section]
\newtheorem{eg}{Example}[section]
\newtheorem{thm}{Theorem}[section]
\newtheorem{lem}{Lemma}[section]
\newtheorem{soln}{Solution}[section]
\newtheorem{propn}{Proposition}[section]
\newtheorem{ex}{Exercise}[section]
\newtheorem{conj}{Conjecture}[section]
\newtheorem{pb}{Problem}[section]
\newtheorem{cor}{Corollary}[section]
\newtheorem{rmk}{Remark}[section]
\newtheorem{note}{Note}[section]
\newtheorem{notes}{Notes}[section]
\newtheorem{readingex}{Reading exercise}[section]
\newcommand{\pf}{\noindent {\bf Proof:} }
\newcommand{\proof}{\noindent {\it Proof:} }
\frenchspacing

\tikzstyle{level 1}=[level distance=2.75cm, sibling distance=5.65cm]
\tikzstyle{level 2}=[level distance=3cm, sibling distance=2.75cm]
\tikzstyle{level 3}=[level distance=3.9cm, sibling distance=1.5cm]

\tikzstyle{bag} = [text width=10em, text centered] 
\tikzstyle{end} = [circle, minimum width=3pt,fill, inner sep=0pt]

\title{Defective Ramsey Numbers in Graph Classes \thanks{Supported by T{\"U}B\.{I}TAK Grant no:118F397.}}
\author{Yunus Emre Demirci\footnote{Department of Mathematics, Bo\u{g}azi\c{c}i University, 34342, Bebek, Istanbul, Turkey} \hspace{0.2in} T{\i}naz Ekim \thanks{This author acknowledges the support of the Turkish Academy of Science T{\"U}BA GEBIP award.} \footnote{Department of Industrial Engineering, Bo\u{g}azi\c{c}i University, 34342, Bebek, Istanbul, Turkey} \hspace{0.2in} John Gimbel\thanks{This work was conducted when John Gimbel visited Istanbul Center for Mathematical Sciences (IMBM) whose support is greatly acknowledged.} \footnote{Mathematics and Statistics, University of Alaska, 34342, Fairbanks, AK, 99775-6660, USA} \hspace{0.2in} Mehmet Akif Y{\i}ld{\i}z\footnote{Department of Mathematics, Bo\u{g}azi\c{c}i University, 34342, Bebek, Istanbul, Turkey} } \vspace{0.25in}

\maketitle
 
\begin{abstract}
\noindent Given a graph $G$, a \textit{$k$-sparse $j$-set} is a set of $j$ vertices inducing a subgraph with maximum degree at most $k$. A \textit{$k$-dense $i$-set} is a set of $i$ vertices that is $k$-sparse in the complement of $G$. As a generalization of Ramsey numbers, the $k$-defective Ramsey number $R_k^{\mathcal{G}}(i,j)$ for the graph class $\mathcal{G}$ is defined as the smallest natural number $n$ such that all graphs on $n$ vertices in the class $\mathcal{G}$ have either a $k$-dense $i$-set or a $k$-sparse $j$-set. In this paper, we examine $R_k^{\mathcal{G}}(i,j)$ where $\mathcal{G}$ represents various graph classes. In forests and cographs, we give formulas for all defective Ramsey numbers. In cacti, bipartite graphs and split graphs, we provide defective Ramsey numbers in most of the cases and point out open questions, formulated as conjectures if possible. 

\bigskip

\textbf{Keywords:} $k$-dense; $k$-sparse; $k$-defective; forest; cograph; bipartite; cacti; split graph

\end{abstract}

\section{Introduction}

Ramsey numbers have been the focus of several research papers since decades. For any two positive integers $i$ and $j$, the Ramsey number $R(i, j)$ is the smallest positive integer such that every graph on at least $R(i, j)$ vertices has a clique of size $i$ or an independent set of size $j$. It is well-known that computing Ramsey numbers is an extremely difficult task starting from quite small integers $i$ and $j$. Among several variations of the classical Ramsey numbers, some research has been focused on defective Ramsey numbers recently. This version is obtained by relaxing the notion of cliques and independent sets as follows. A \textit{$k$-sparse $j$-set} is a set $S$ of $j$ vertices of a graph $G$ such that the subgraph induced by $S$ has maximum degree at most $k$. A \textit{$k$-dense $i$-set} is a set $D$ of $i$ vertices of a graph $G$ that is $k$-sparse in the complement of $G$. In this case, we also say that each vertex in $D$ \textit{misses} at most $k$ other vertices in $D$. A $k$-sparse or $k$-dense set is called \textit{$k$-defective} (or \textit{$k$-uniform}).  The \textit{$k$-defective Ramsey number} $R_k^{\mathcal{G}}(i,j)$ for the graph class $\mathcal{G}$ is defined as the smallest natural number $n$ such that all graphs on $n$ vertices in the class $\mathcal{G}$ have either a $k$-dense $i$-set or a $k$-sparse $j$-set.

Exacts values of some 1-defective and 2-defective Ramsey numbers are reported by Cockayne and Mynhardt (under the name of dependent Ramsey numbers) in \cite{dependentRamsey} and by Ekim and Gimbel in \cite{defectiveparameterTinazJohn}. More recently, in addition to direct proofs, some computer based search algorithms are also used by Akdemir and Ekim \cite{defectiveparameterTinazAhu}, and by Chappell and Gimbel \cite{defectiveRamseyJohnChappell} to obtain further 1-defective and 2-defective Ramsey numbers.  To deal with hard problems, a natural approach in graph theory consists in considering the same problem when restricted to special graph classes. This method was adopted in a systematic way by Belmonte at al. in \cite{ramseygraphclassesHeggernes} for computing (classical) Ramsey numbers in various graph classes. In the same paper, related results in the literature are also surveyed. The approach of considering Ramsey numbers in graph classes has been recently applied to the defective version by Ekim et al.  in \cite{1defectiveperfectTinazOylumJohn}. The authors compute some exact values of 1-defective Ramsey numbers in the class of perfect graphs. Namely, they show $R^{\mathcal{PG}}_1 (3, j) = j$ for any $j \geq 2$, $R^{\mathcal{PG}}_1 (4, 4) = 6$, $R^{\mathcal{PG}}_1 (4, 5) = 8$, $R^{\mathcal{PG}}_1 (4, 6) = 10$, $R^{\mathcal{PG}}_1 (4, 7) = 13$, $R^{\mathcal{PG}}_1 (4, 8) = 15$ and $R^{\mathcal{PG}}_1 (5, 5) = 13$  where $\mathcal{PG}$ denotes the class of perfect graphs. Among further research directions, the computation of defective Ramsey numbers in more restricted graph classes where a formula describing all defective Ramsey numbers can be derived is pointed out as a promising direction. The classes of cographs, interval graphs and cacti are explicitly mentioned as possible candidates.

\textbf{Our contribution:} In this paper, we consider defective Ramsey numbers in various graph classes, namely, forests, cacti, bipartite graphs, split graphs and cographs. In forests and cographs we compute all defective Ramsey numbers, formulated as $j+\Big\lfloor \dfrac{j-1}{k+1}\Big\rfloor$ and $1+\dfrac{(i-1)(j-1)-\{i-1\}\{j-1\}}{k+1}$ where $\{x\}$ denotes the value of the integer $x$ modulo $k+1$, in Theorems \ref{thm:forest} and \ref{thm:cographkdefective}, respectively. In cacti, the formula $j-1+\Big\lceil\dfrac{j-1}{k}\Big\rceil$ gives all defective Ramsey numbers except a few cases that we point out as open question. In bipartite graphs, we show that all 1-defective Ramsey numbers are $2j-1$ (Theorems \ref{thm:bipartiteexcept4} and \ref{thm:bipartite1defective}) with a few exceptions for small values of $j$ (Theorem \ref{thm:bipartitesmall}) and five open cases for which we conjecture also $2j-1$ (Conjecture \ref{con:bipartite1defectivtherest}). In addition, we settle all defective Ramsey numbers in bipartite graphs for $k\geq 2$ and $i\geq 2k+3$ in Theorem \ref{thm:bipartitelargevalues} as follows: if $j\geq 2k+1$ then it is $2j-1$, otherwise it is $2j-1-k$.  As for split graphs, we provide all 1-defective and 2-defective Ramsey numbers (Theorems \ref{thm:splitsufficientlylarge} and  \ref{thm:split1and2defective} respectively). We also show in Theorem \ref{thm:splitsufficientlylarge} that defective Ramsey numbers in split graphs are equal to $i+j-1$ for all  ``relatively large" $i$ and $j$. We conclude with a conjecture for a general formula in split graphs covering all known cases (Conjecture \ref{conj:splitmostgeneral}).

Last but not least, in Section \ref{sec:conj}, we consider a conjecture formulated by Chappell and Gimbel in \cite{defectiveRamseyJohnChappell} which states that $R_k(k+i,k+j)-k\leq R(i,j)$ holds for all $i,j,k\geq 0$ (for general graphs). In light of our results, we show that this conjecture holds when restricted to forests, cacti or cographs, whereas it fails when restricted to bipartite graphs or split graphs. In Section \ref{sec:conclusion}, we summarize our results, conjectures and open questions in Table \ref{table:summary}, and point out some research directions.

\section{Definitions and Preliminaries}

Given a graph $G=(V,E)$, for a vertex $u\in V$, $N(u)$ denotes the set of neighbors of $u$. The \textit{degree} of a vertex $u$ is the number of its neighbors, denoted by $d(u)$. We also have $N[u]=N(u) \cup \{u\}$. For a subset of vertices $S\subset V$, the number of neighbors of a vertex $u$ in $S$ is denoted by $d_S(u)$. Whenever it is clear from the context (or we mean the whole vertex set), we omit the subscript $S$. Given two graphs $G_1=(V_1,E_1)$ and $G_2=(V_2,E_2)$, the \textit{disjoint union} of $G_1$ and $G_2$ is the graph with vertex set $V_1\cup V_2$ and edge set $E_1\cup E_2$. We also denote by $kG$ the disjoint union of $k$ copies of a graph $G$. The \textit{join} of $G_1$ and $G_2$ is the graph $G=(V_1\cup V_2, E_1\cup E_2\cup \{xy, x\in V_1, y\in V_2\})$. For two subsets of vertices $U$ and $V$, we say that $U$ is \textit{complete} to $V$ (or vice versa) if there is an edge between every pair of vertices $u\in U$ and $v\in V$. A graph class $\mathcal G$ is said to be \textit{self-complementary} if for any graph in $\mathcal G$, its complement is also in $\mathcal G$.

A \textit{clique} is a set of vertices which are pairwise adjacent. An \textit{independent set} is a set of vertices which are pairwise non-adjacent. Given a graph $G$, the size of a largest $k$-sparse set in $G$ is denoted by $\alpha_k(G)$. A graph $G$ is \textit{bipartite} if its vertex set can be partitioned into two independent sets $A$ and $B$; then $(A,B)$ is called a bipartition of $G$. We use the notation $K_{i,j}$ for a complete bipartite graph with bipartition $(A,B)$ such that $|A|=i$ and $|B|=j$. A connected component $C$ of a graph is a maximally connected subset of vertices of $G$. We sometimes abuse the language and use $C$ and $G-C$  to denote the subgraph of $G$ induced by $C$ and by $V(G)\setminus C$ respectively.  

Let us first state some remarks that will be frequently used. The followings are either trivial or a direct consequence of some results from \cite{1defectiveperfectTinazOylumJohn} and they will be sometimes used without reference throughout the paper.

\begin{remark}\label{rem:generalsmallvaluesofiandj}
For all $i,j$ such that  $\min\{i,j\}\leq k+1$, we have $R_{k}^{\mathcal{G}}(i,j)=\min\{i,j\}$.
\end{remark}

\begin{remark}\label{rem:graphclasshereditary}
Let $\mathcal{G}$ and $\mathcal{H}$ be two graph classes with $\mathcal{G}\subseteq\mathcal{H}$. Then, $R_{k}^{\mathcal{G}}(i,j)\leq R_{k}^{\mathcal{H}}(i,j)$ for all $i,j,k$.
\end{remark}

\begin{remark}\label{rem:defectiveinequalityfork}
For any graph class $\mathcal{G}$, we have $R_{k+1}^{\mathcal{G}}(i,j)\leq R_{k}^{\mathcal{G}}(i,j)$ for all $i,j,k$.
\end{remark}

\begin{remark}\label{rem:defectiveinequalityforiandj}
For any graph class $\mathcal{G}$, we have $R_{k}^{\mathcal{G}}(i,j)\leq R_{k}^{\mathcal{G}}(i+a,j+b)$ for all $i,j,k,a,b$.
\end{remark}

\begin{remark}\label{rem:defectivecomplement}
Let $\mathcal{G}$ be a self-complementary graph class. Then, $R_{k}^{\mathcal{G}}(i,j)=R_{k}^{\mathcal{G}}(j,i)$ for all $i,j,k$.
\end{remark}

\begin{remark}\label{rem:sparseunion}
The disjoint union of $k$-sparse sets is also a $k$-sparse set.
\end{remark}

\begin{remark}\label{rem:ksparseunion}
Let $C_1$, $C_2$, ..., $C_t$ be connected components of a graph $G$. Then, $$\alpha_k(G)=\sum_{l=1}^{t}\alpha_k(C_l)$$ 
\end{remark}

\noindent In \cite{1defectiveperfectTinazOylumJohn}, the authors noted $R_{1}^{\mathcal{PG}}(3,j)=j$ for all $j\geq 3$. This observation can be generalized as follows.

\begin{lemma}\label{lem:generalkplustwo}
Let $\mathcal{G}$ be a graph class containing all empty graphs. Then, $$R_{k}^{\mathcal{G}}(k+2,j)=j\text{ for all }j\geq k+2$$
\end{lemma}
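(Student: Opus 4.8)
The plan is to prove the lemma by establishing the two inequalities $R_k^{\mathcal{G}}(k+2,j)\leq j$ and $R_k^{\mathcal{G}}(k+2,j)\geq j$. The upper bound is the substantive direction. Let $G$ be any graph on $n=j$ vertices in $\mathcal{G}$; I must show $G$ has a $k$-dense $(k+2)$-set or a $k$-sparse $j$-set. The key observation is that a set of $k+2$ vertices is $k$-dense precisely when every vertex in the set misses at most $k$ of the other $k+1$ vertices — which is automatic, since there are only $k+1$ other vertices in the set. Hence \emph{any} $k+2$ vertices of $G$ form a $k$-dense $(k+2)$-set. Since $n=j\geq k+2$, such a set exists, so $G$ trivially has a $k$-dense $(k+2)$-set, giving $R_k^{\mathcal{G}}(k+2,j)\leq j$.

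For the lower bound, I would exhibit a graph in $\mathcal{G}$ on $j-1$ vertices with neither a $k$-dense $(k+2)$-set nor a $k$-sparse $j$-set. Take $G$ to be the empty graph on $j-1$ vertices, which lies in $\mathcal{G}$ by hypothesis. It has no $k$-sparse $j$-set simply because it has only $j-1$ vertices. It has no $k$-dense $(k+2)$-set because any $k+2$ vertices in the empty graph induce an empty subgraph, so in the complement each such vertex is adjacent to all $k+1$ others — missing $k+1>k$ vertices — hence the set is not $k$-dense (here I use $k+2\leq j-1$, i.e. $j\geq k+3$; the boundary case $j=k+2$ is covered by Remark \ref{rem:generalsmallvaluesofiandj}, which gives $R_k^{\mathcal{G}}(k+2,k+2)=k+2$ directly). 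Combining the two bounds yields $R_k^{\mathcal{G}}(k+2,j)=j$ for all $j\geq k+2$.

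The argument is essentially a bookkeeping exercise on the definition of $k$-dense sets, so there is no real obstacle; the only point requiring care is the case analysis at the boundary $j=k+2$, where the empty graph on $j-1=k+1$ vertices is too small to even discuss a $(k+2)$-set, so one must invoke Remark \ref{rem:generalsmallvaluesofiandj} rather than the explicit construction. Alternatively, one can avoid the split entirely by noting that $R_k^{\mathcal{G}}(k+2,j)\geq R_k^{\mathcal{G}}(k+2,k+2)=k+2$ for $j=k+2$ and, for $j\geq k+3$, that no graph on $j-1$ vertices can have a $k$-sparse $j$-set while the empty graph on $j-1\geq k+2$ vertices has no $k$-dense $(k+2)$-set.
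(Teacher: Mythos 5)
Your lower bound is fine (and in fact the worry about the boundary case $j=k+2$ is unnecessary: the empty graph on $j-1=k+1$ vertices has no $(k+2)$-subset at all, hence vacuously no $k$-dense $(k+2)$-set, and no $j$-subset, hence no $k$-sparse $j$-set, so the same construction covers every $j\geq k+2$). The upper bound, however, contains a genuine error. You claim that a vertex in a $(k+2)$-set automatically misses at most $k$ of the other $k+1$ vertices ``since there are only $k+1$ other vertices in the set.'' That is false: a vertex can miss all $k+1$ of them. What is automatic is only that it misses at most $k+1$ others, which is one too many. Your own lower-bound argument refutes the claim --- you correctly observe there that in the empty graph any $k+2$ vertices each miss $k+1>k$ others, so the set is \emph{not} $k$-dense. (The trivially-dense threshold is $k+1$ vertices, not $k+2$; that is the content of Remark \ref{rem:generalsmallvaluesofiandj}.)

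The correct argument for the upper bound, as in the paper, needs one case split. Let $G$ have $j$ vertices. If every vertex of $G$ has degree at most $k$, then $V(G)$ itself is a $k$-sparse $j$-set. Otherwise some vertex $u$ has degree at least $k+1$; pick $k+1$ neighbours $v_1,\dots,v_{k+1}$ of $u$. In the set $\{u,v_1,\dots,v_{k+1}\}$ the vertex $u$ misses nobody, and each $v_i$ is adjacent to $u$ and so misses at most the $k$ remaining vertices $v_\ell$, $\ell\neq i$. Hence this is a $k$-dense $(k+2)$-set. This repairs the gap; the rest of your write-up stands.
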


\begin{proof}
Firstly, if we take an empty graph on $j-1$ vertices, clearly it has no $k$-dense $(k+2)$-set and no $k$-sparse $j$-set. Secondly, let $G$ be a graph on $j$ vertices and assume it is not $k$-sparse. Hence there exists a vertex $u$ with $d(u)\geq k+1$. Choose exactly $k+1$ neighbors of $u$, say $v_1$, $v_2$, ..., $v_{k+1}$. Then, $\{u,v_1,v_2,...,v_{k+1}\}$ forms a $k$-dense $(k+2)$-set. \qed \\
\end{proof}

\noindent Since all graph classes under consideration in this paper contain all empty graphs, as a natural consequence of Remark \ref{rem:generalsmallvaluesofiandj} and Lemma \ref{lem:generalkplustwo}, our main focus will be the computation of defective Ramsey numbers where $i\geq k+3$ and $j\geq k+2$.


\section{Forests}
A \textit{forest} is a graph whose connected components are trees. Let $\mathcal{FO}$ be the class of all forests. In this section, we give a lower bound on the maximum size of a $k$-sparse set in a forest, namely Lemma \ref{lem:forestksparse}, which will be useful in the computation of defective Ramsey numbers in cacti in Section \ref{sec:cacti}. The same lower bound is also used to derive the formula for all defective Ramsey numbers in forests in Theorem \ref{thm:forest}.\\

\noindent Let us first state the classical Ramsey numbers in forests:

\begin{theorem}\label{thm:forestgeneralRamsey}
\cite{ramseygraphclassesHeggernes} For all $i\geq 3$ and $j\geq1$, we have $R_0^{\mathcal{FO}}(i,j)=2j-1$.
\end{theorem}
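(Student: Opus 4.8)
The plan is to establish the two inequalities $R_0^{\mathcal{FO}}(i,j)\le 2j-1$ and $R_0^{\mathcal{FO}}(i,j)\ge 2j-1$ separately. The key structural observation is that a forest is triangle-free, so for $i\ge 3$ a $0$-dense $i$-set (an ordinary clique of size $i$) can never occur in a forest. Consequently the problem reduces entirely to determining when a forest is forced to contain a $0$-sparse $j$-set, i.e.\ an independent set of size $j$, which is why the answer ends up being independent of $i$.

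For the upper bound, I would first recall that every tree is bipartite, so a tree on $m$ vertices, having a bipartition whose larger side is independent, satisfies $\alpha\ge\lceil m/2\rceil$. Now let $F$ be any forest on $2j-1$ vertices with connected components $C_1,\dots,C_t$ of orders $m_1,\dots,m_t$. Applying Remark \ref{rem:ksparseunion} with $k=0$ gives $\alpha(F)=\sum_{l=1}^{t}\alpha(C_l)\ge\sum_{l=1}^{t}\lceil m_l/2\rceil\ge\big\lceil(\sum_{l=1}^{t}m_l)/2\big\rceil=\lceil(2j-1)/2\rceil=j$, where the middle inequality is the easily checked fact that $\lceil a/2\rceil+\lceil b/2\rceil\ge\lceil(a+b)/2\rceil$, extended by induction on $t$. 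Since $F$ contains no $K_i$ for $i\ge 3$, it must contain an independent set of size $j$; hence $R_0^{\mathcal{FO}}(i,j)\le 2j-1$.

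For the lower bound, I would exhibit a single forest on $2j-2$ vertices that has neither a clique of size $i\ge 3$ nor an independent set of size $j$, namely the disjoint union $(j-1)K_2$ of $j-1$ independent edges. Its clique number is $2<i$, and choosing at most one endpoint per edge shows $\alpha((j-1)K_2)=j-1<j$; for the degenerate case $j=1$ the empty graph on $0$ vertices serves the same role. Thus $R_0^{\mathcal{FO}}(i,j)>2j-2$, and combining this with the upper bound yields the claimed equality $R_0^{\mathcal{FO}}(i,j)=2j-1$.

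I do not expect a serious obstacle in this argument; it is essentially routine. The only points that need care are the small-$j$ boundary cases and the verification of the superadditivity of $m\mapsto\lceil m/2\rceil$ across components, together with the (crucial but simple) remark that the hypothesis $i\ge 3$ is exactly what removes the clique side of the Ramsey condition in the triangle-free setting of forests.
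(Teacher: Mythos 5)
Your proposal is correct. Note, however, that the paper does not prove this statement at all: Theorem \ref{thm:forestgeneralRamsey} is quoted from Belmonte et al.\ \cite{ramseygraphclassesHeggernes} without proof, so there is no in-paper argument to compare against. Your argument is a valid self-contained derivation: the observation that a forest is triangle-free correctly kills the clique side for $i\ge 3$; the upper bound via bipartiteness of each component, the superadditivity $\lceil a/2\rceil+\lceil b/2\rceil\ge\lceil (a+b)/2\rceil$, and Remark \ref{rem:ksparseunion} is sound; and $(j-1)K_2$ is a correct extremal forest on $2j-2$ vertices, with the $j=1$ degenerate case handled. It is worth noting that your bound $\alpha(F)\ge\lceil n/2\rceil$ is exactly the $k=0$ specialization of the paper's Lemma \ref{lem:forestksparse}, and your extremal graph $(j-1)K_2$ is the $k=0$ analogue of the construction $sK_{1,k+1}$ plus isolated vertices used in Theorem \ref{thm:forest}; so your proof fits naturally as the base case of the paper's general $k$-defective treatment of forests, obtained here by the more elementary route of invoking bipartiteness directly rather than the inductive argument needed for general $k$.
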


\noindent Since an empty graph is a forest, the following remark follows from Lemma \ref{lem:generalkplustwo}.

\begin{remark}\label{rem:forestkplustwo}	For all $k\geq1$ and $j\geq k+2$, we have $R_k^{\mathcal{FO}}(k+2,j)=j$.
\end{remark}

\noindent The following emphasizes that a forest does not have large $k$-dense sets.

\begin{lemma}\label{lem:forestkdense}
	Let $k\geq1$, $i\geq k+3$ and $G$ be a forest. Then, $G$ has no $k$-dense $i$-set.
\end{lemma}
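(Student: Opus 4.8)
The plan is to show that a $k$-dense $i$-set with $i \geq k+3$ cannot exist in a forest by counting edges in the induced subgraph. Suppose $D$ is a $k$-dense $i$-set in a forest $G$, and let $H = G[D]$ be the subgraph induced by $D$. Since $D$ is $k$-dense, each vertex of $D$ misses at most $k$ other vertices of $D$; equivalently, in $H$ every vertex has degree at least $|D| - 1 - k = i - 1 - k$. Summing over all $i$ vertices of $D$, the handshake lemma gives $|E(H)| \geq \frac{i(i-1-k)}{2}$.

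Next I would use the fact that $H$, being an induced subgraph of a forest, is itself a forest, and hence $|E(H)| \leq i - 1$ (a forest on $i$ vertices has at most $i-1$ edges). Combining the two bounds yields $\frac{i(i-1-k)}{2} \leq i - 1$, i.e. $i(i-1-k) \leq 2(i-1) < 2i$, so $i - 1 - k < 2$, which gives $i < k + 3$, i.e. $i \leq k+2$. This contradicts the hypothesis $i \geq k+3$, so no such $D$ exists.

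There is essentially no hard part here — the argument is a short double-counting estimate. The only point requiring a moment's care is the edge case where $i - 1 - k$ could be zero or where $H$ is small; but since $i \geq k+3$ forces $i - 1 - k \geq 2$, the minimum degree in $H$ is at least $2$, and a forest (indeed any graph with $i-1$ edges on $i$ vertices) cannot have minimum degree $2$ because the sum of degrees would be at least $2i > 2(i-1)$. So the inequality chain is strict where it needs to be, and the conclusion follows cleanly.
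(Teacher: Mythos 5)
Your proof is correct and rests on the same underlying fact as the paper's, namely that an induced subgraph of a forest is acyclic and hence sparse; the paper simply picks a vertex of degree at most one (which then misses at least $i-2\geq k+1$ vertices of $D$), whereas you reach the same contradiction by double-counting edges against the bound $|E(H)|\leq i-1$. Both arguments are sound and essentially equivalent, and your closing remark about minimum degree in fact recovers the paper's version.
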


\begin{proof}
Let $n$ be the order of $G$. If $n\leq i-1$, then the claim is trivial. Assume $n\geq i$ and consider a subset of vertices $D$ of order $i$. Since $G$ is a forest, $D$ does not induce any cycle. Hence it has a vertex of degree at most one, which misses at least $i-2$ other vertices in $D$. Since $i-2\geq k+1$, $D$ is not a $k$-dense $i$-set. \qed\\
\end{proof}

\noindent In contrast with $k$-dense sets, forests admit  ``relatively large" $k$-sparse sets as shown in the following:
\begin{lemma}\label{lem:forestksparse}
	If $F$ is a forest of order $n$ then $\alpha_k(F)\geq \Big\lceil\dfrac{k+1}{k+2}n\Big\rceil$.
\end{lemma}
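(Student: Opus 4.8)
The plan is to prove the bound $\alpha_k(F)\geq \lceil\frac{k+1}{k+2}n\rceil$ by induction on $n$, building a large $k$-sparse set by repeatedly deleting a small cluster of vertices around a leaf. First I would dispose of the base cases $n\leq k+2$ (indeed when $n \leq k+1$ the whole vertex set is $k$-sparse, and when $n = k+2$ one checks that $\lceil\frac{k+1}{k+2}n\rceil = k+1$, which is easily attained by deleting any one vertex, or even the whole set if $F$ has a vertex of degree $\le k$). For the inductive step with $n\geq k+3$, I would take a forest $F$ on $n$ vertices; if $F$ is already $k$-sparse we are done since $n \ge \lceil\frac{k+1}{k+2}n\rceil$, so assume some vertex has degree $\ge k+1$ in $F$.

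The key step is to locate a vertex whose removal, together with a controlled number of its neighbors, leaves a forest on roughly $\frac{k+2}{k+2}$ fewer vertices but only costs us one vertex of the sparse set — i.e. a deletion "rate" of $1$ good vertex per $k+2$ deleted. Concretely, I would root each tree component and pick a deepest vertex $u$ that has at least $k+1$ descendants-neighbors... more carefully: choose a leaf $\ell$ of $F$, walk up to its neighbor, and among vertices on the way find a vertex $v$ all of whose children are leaves (such a $v$ exists by taking a longest path / deepest non-leaf). If $v$ has at most $k$ children, then $v$ together with all its children forms a $k$-sparse set inside $F$ of size at most $k+1$; delete $v$ and its children (at most $k+1 \le k+2$ vertices), keep all of them, and apply induction to the remaining forest $F'$ on $n' \ge n-(k+1)$ vertices. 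If instead $v$ has $\ge k+1$ leaf-children, pick exactly $k+1$ of them: these $k+1$ leaves form a $k$-sparse set (in fact independent, since leaves attached to the same vertex are pairwise non-adjacent), delete $v$ together with these $k+1$ children — that is exactly $k+2$ vertices removed — keep the $k+1$ leaves, and apply induction to $F'$ on $n-(k+2)$ vertices.

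In both cases I get $\alpha_k(F) \ge (k+1) + \alpha_k(F') \ge (k+1) + \lceil\frac{k+1}{k+2}(n-(k+2))\rceil = (k+1) + \lceil\frac{k+1}{k+2}n\rceil - (k+1) = \lceil\frac{k+1}{k+2}n\rceil$, using that the second case removed exactly $k+2$ vertices (the first case removes fewer, which only helps). One must double-check that the vertices kept from the deleted cluster, taken disjointly across the recursion, still form a $k$-sparse set in $F$: this holds because each kept cluster induces max degree $\le k$ internally and, crucially, $v$ (the one vertex of the cluster adjacent to the rest of $F'$) is \emph{not} kept, so no kept vertex has any neighbor outside its own cluster — we can then invoke Remark \ref{rem:sparseunion}. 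The main obstacle I anticipate is precisely this bookkeeping: ensuring the deletion always removes at least one vertex incident to the "outside," so that the retained pieces glue together into a genuinely $k$-sparse set, and simultaneously controlling the ceiling arithmetic so the per-step loss never exceeds the $\frac{1}{k+2}$ budget; choosing the deepest all-leaf-children vertex $v$ and always throwing $v$ away is what makes both constraints work at once.
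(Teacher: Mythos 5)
Your Case 2 (the deepest non-leaf $v$ has at least $k+1$ leaf-children) is sound: you discard $v$, keep $k+1$ of its leaves, these have no remaining neighbours in $F'$, and the arithmetic $(k+1)+\big\lceil\tfrac{k+1}{k+2}(n-k-2)\big\rceil=\big\lceil\tfrac{k+1}{k+2}n\big\rceil$ checks out. The gap is in Case 1. There you keep $v$ itself, but $v$ is adjacent to its parent $p$, which lives in $F'$; the induction hypothesis hands you \emph{some} $k$-sparse set of $F'$, and nothing prevents it from containing $p$. If it does, the union need not be $k$-sparse: $v$ can reach degree $k+1$ (its $k$ children plus $p$), and $p$'s degree also increases by one. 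Your own gluing justification asserts that ``$v$ \dots is not kept,'' which is true in Case 2 but false in Case 1, so the two halves of the writeup contradict each other. For a concrete failure take $k=1$ and the path $\ell_1 v_1 p\, v_2 \ell_2$: Case 1 deletes $\{v_1,\ell_1\}$ and keeps both, the induction on the remaining path $p\,v_2\ell_2$ may legitimately return $\{p,v_2\}$, and the glued set $\{\ell_1,v_1,p,v_2\}$ has $d(p)=2>k$.

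The fix is not cosmetic. If instead you discard $v$ and keep only its $j\le k$ leaf-children, you retain $j$ of the $j+1$ deleted vertices, and $\tfrac{j}{j+1}\ge\tfrac{k+1}{k+2}$ is equivalent to $j\ge k+1$ --- exactly the hypothesis of Case 2. So the configuration in which the pendant star at the deepest non-leaf is small is precisely where the real work of the lemma lies, and a one-cluster-at-a-time greedy cut cannot pay for it. The paper's proof sidesteps this with a coarser decomposition: a component with exactly one vertex of degree at least $k+1$ is removed \emph{whole}, keeping everything except that vertex (such a component has at least $k+2$ vertices, so the rate is at least $\tfrac{k+1}{k+2}$ and there is no gluing problem because the entire component is consumed); a component with two or more such vertices is handled by choosing two of them furthest apart and carving off a large $k$-sparse piece together with a single separating vertex. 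You would need an analogous idea --- accumulating more of the tree before cutting, or excising whole components --- to close Case 1.
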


\begin{proof}
	Fix $k$. We will proceed by induction on $n$. If $n\leq k+2$ and $F$ has no vertex of degree $k+1$ then $F$ is $k$-sparse. If $F$ has a vertex of degree $k+1$ then $F$ is a star and all vertices except the center are independent and thus $k$-sparse, completing the base case. So suppose $n\geq k+3$ and the lemma holds for forests of smaller order.\\
	
	\noindent Let us say a vertex is {\em large} if it has degree at least $k+1$. If $F$ contains no large vertices then it is $k$-sparse and we are done. So suppose that $F$ has a component $C$ with exactly one large vertex $w$.
	Note, $C-w$ is $k$-sparse. In which case say $C$ has $m$ vertices. Note $m\geq k+2$. Perform induction and note $F-C$ has a $k$-sparse set of order at least $\Big\lceil\dfrac{k+1}{k+2}(n-m)\Big\rceil$. Adding to this set the vertices of $C-w$ produces the desired result.\\
	
	\noindent So suppose $F$ has a component $C$ with more than one large vertex. Find two, say $u$ and $v$, that are furthest apart. Note, $C-v$ has a component containing all large vertices of $C$ other than $v$, call
	the component $H$. Say $H$ has order $h$. Note, $v$ is adjacent with at least $k$ vertices not in $H$. Suppose $v$ has degree at least $k+2$. Perform induction to produce a $k$-sparse subset of $H$ having at least $\Big\lceil\dfrac{k+1}{k+2}h\Big\rceil$ vertices. Add to this all vertices of $C-(V(H)\cup \{v\})$ and reach the desired conclusion. Finally, suppose $v$ has degree exactly $k+1$. Let $x$ be the vertex in $H$ adjacent with $v$. In $C-x$ we find a component containing $v$. Let us call this $D$. This component contains at least $k+1$ vertices and is $k$-sparse. Removing this component from $F$, along with $x$, we perform
	induction on what remains and then add all vertices of $D$ and reach our desired conclusion. \qed \\
\end{proof}

\noindent Now, we will present the exact formula for defective Ramsey numbers on forests.
\begin{theorem}\label{thm:forest}
	For all $k\geq1$, $i\geq k+3$ and $j\geq k+2$, we have $$R_k^{\mathcal{FO}}(i,j)=j+\Big\lfloor\dfrac{j-1}{k+1}\Big\rfloor.$$
\end{theorem}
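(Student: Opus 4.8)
The plan is to establish the equality by proving the two inequalities separately, using Lemmas~\ref{lem:forestkdense} and~\ref{lem:forestksparse} as the main tools. Set $n^* = j + \lfloor (j-1)/(k+1) \rfloor$; we want to show $R_k^{\mathcal{FO}}(i,j) = n^*$.

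For the upper bound $R_k^{\mathcal{FO}}(i,j) \le n^*$, I would take an arbitrary forest $F$ on $n^*$ vertices and show it must contain a $k$-sparse $j$-set (a $k$-dense $i$-set is impossible here for free by Lemma~\ref{lem:forestkdense}, since $i \ge k+3$). By Lemma~\ref{lem:forestksparse}, $F$ has a $k$-sparse set of size at least $\lceil \frac{k+1}{k+2} n^* \rceil$, so it suffices to check the arithmetic inequality $\lceil \frac{k+1}{k+2}\bigl(j + \lfloor \frac{j-1}{k+1}\rfloor\bigr)\rceil \ge j$. Writing $j - 1 = q(k+1) + r$ with $0 \le r \le k$, one has $\lfloor (j-1)/(k+1)\rfloor = q$, so $n^* = j + q$ and $n^* = (k+2)q + r + 1$; then $\frac{k+1}{k+2} n^* = (k+1)q + \frac{(k+1)(r+1)}{k+2}$, and since $r \le k$ gives $\frac{(k+1)(r+1)}{k+2} > r$ (equivalently $k+1 > r$), we get $\lceil \frac{k+1}{k+2} n^* \rceil \ge (k+1)q + r + 1 = j$. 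This handles the upper bound.

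For the lower bound $R_k^{\mathcal{FO}}(i,j) \ge n^*$, I would exhibit a forest on $n^* - 1$ vertices with no $k$-dense $i$-set and no $k$-sparse $j$-set. No $k$-dense $i$-set again comes for free from Lemma~\ref{lem:forestkdense}. For the construction, the natural candidate is a disjoint union of stars: take $q$ copies of the star $K_{1,k+1}$ together with one star $K_{1,r}$ (where $j-1 = q(k+1)+r$ as above), which uses $q(k+2) + (r+1) = n^*$ vertices---so actually we need $n^* - 1$ vertices, meaning we should drop one leaf, e.g.\ take $q$ copies of $K_{1,k+1}$ and one star $K_{1,r-1}$ (or $q-1$ copies and adjust when $r = 0$; the edge case $r=0$ needs a separate small tweak). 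In a star $K_{1,m}$ with $m \ge k+1$, the largest $k$-sparse set has exactly $m$ vertices (all leaves), and by Remark~\ref{rem:ksparseunion} $\alpha_k$ of the disjoint union is the sum, which I would compute to equal exactly $j-1$, so there is no $k$-sparse $j$-set.

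The main obstacle I anticipate is bookkeeping around the floor function and the residue $r$, particularly the boundary case $r = 0$ (i.e.\ $k+1 \mid j-1$), where the ``leftover'' star degenerates and the extremal construction must be described slightly differently (one fewer large star plus an appropriately sized independent set or path). The arithmetic in both the ceiling estimate for the upper bound and the vertex count for the lower-bound construction must be made to match $n^*$ exactly on the nose in every residue class, and checking that the disjoint-union-of-stars example truly has no $k$-sparse $j$-set requires verifying that a star on $m \ge k+2$ vertices contributes exactly $m-1$ to $\alpha_k$ and cannot do better. Everything else is routine once the parametrization $j - 1 = q(k+1) + r$ is fixed.
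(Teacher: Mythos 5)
Your proposal is correct and follows essentially the same route as the paper: the same parametrization $j-1=q(k+1)+r$, the same use of Lemma~\ref{lem:forestksparse} with the same ceiling computation for the upper bound, and the same disjoint-union-of-stars extremal graph for the lower bound (the paper uses $q$ copies of $K_{1,k+1}$ plus $r$ isolated vertices, which is just your $K_{1,r-1}$ component replaced by an independent set of the same size and also handles $r=0$ uniformly). The edge case you flag is thus resolved exactly as you suggest, and the remaining bookkeeping matches the paper's.
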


\begin{proof}
Let $j-1=(k+1)s+t$ for some $s\geq 1$ and $0\leq t\leq k$. We need to show that $R_k^{\mathcal{FO}}(i,j)=(k+1)s+t+1+\Big\lfloor\dfrac{(k+1)s+t}{k+1}\Big\rfloor=(k+2)s+(t+1)$.\\
	
\noindent Let $F$ be a forest of order $(k+2)s+(t+1)$. By Lemma \ref{lem:forestksparse}, we have $$\alpha_k(F)\geq \Big\lceil\dfrac{((k+2)s+(t+1))(k+1)}{k+2}\Big\rceil=(k+1)s+\Big\lceil t+1-\dfrac{t+1}{k+2}\Big\rceil=(k+1)s+(t+1)=j$$ since $1\leq t+1\leq k+1$. Then, any forest on $j+\Big\lfloor\dfrac{j-1}{k+1}\Big\rfloor$ vertices has a $k$-sparse $j$-set.\\

\noindent Now, consider the graph $H$ which consists of the disjoint union of $sK_{1,k+1}$ and $t$ isolated vertices. Clearly $H$ has no cycles, thus it is a forest on $(k+2)s+(t+1)-1$ vertices. Also we have $\alpha_k(H)\leq(k+1)s+t=j-1$ since any $k$-sparse set misses at least one vertex from each star. Moreover, since $i\geq k+3$, $H$ has no $k$-dense $i$-set by Lemma \ref{lem:forestkdense}.  This completes the proof. \qed
\end{proof}


\section{Cacti}\label{sec:cacti}

In this section we discuss \textit{cacti}, those graphs where each block is a cycle, a single edge or a single vertex. Some texts define cacti in this way but further restrict them to being connected. If the reader insists on this definition, there will be no disappointment as each of the following results for cacti still hold given the requirement of connectivity. At first glance, it might appear the defective Ramsey numbers for cacti are trivial as relatively large dense sets aren't found in cacti.
In fact, no cactus contains $k+4$ vertices that are $k$-dense. This is because every subset of $k+4$ vertices in a cactus induces again a cactus and each cactus have minimum degree at most two. We also note that in any cactus no cycle contains a chord. Hence, every open neighborhood of every vertex in a cactus is $1$-sparse.\\

\noindent Let $\mathcal{CA}$ denote the set of all cacti. Let us say an \textit{end} block is a block which has exactly one cut vertex. An \textit{isolated} cycle is a cycle which has no cut vertex. A \textit{pendant} vertex is a vertex of degree exactly one. A \textit{penultimate} vertex is a vertex adjacent with a pendant vertex and at most one nonpendant vertex. We can now state the following.

\begin{theorem}\label{thm:cactusgeneralRamsey}
\cite{ramseygraphclassesHeggernes} For all $i\geq 3$ and $j\geq 1$ we have $R_{0}^{\mathcal{CA}}(i,j)=\begin{cases}
\Big\lfloor\dfrac{5(j-1)}{2}\Big\rfloor + 1, & \text{ if } i=3 \\
3(j-1)+1, & \text{ if } i\geq4
\end{cases}$
\end{theorem}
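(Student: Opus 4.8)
The plan is to handle the ranges $i\geq 4$ and $i=3$ separately, reducing each to an extremal problem about independence numbers. The starting point is that the class of cacti is hereditary and $K_4$-free: a $K_4$ is a single $3$-connected block, hence neither a cycle nor an edge nor a vertex, and deleting vertices never puts an edge into two cycles. So no cactus has a clique of size $\geq 4$; thus for $i\geq 4$ the clique alternative is vacuous and $R_0^{\mathcal{CA}}(i,j)$ is exactly the least $n$ for which every cactus of order $n$ has an independent set of size $j$, while for $i=3$ it is the least $n$ for which every triangle-free cactus of order $n$ has an independent set of size $j$.

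For the lower bounds I would take disjoint unions of the densest admissible blocks. For $i\geq 4$, the graph $(j-1)K_3$ is a $K_4$-free cactus of order $3(j-1)$ with independence number $j-1$, so $R_0^{\mathcal{CA}}(i,j)\geq 3(j-1)+1$. For $i=3$, the disjoint union of $\lfloor (j-1)/2\rfloor$ copies of $C_5$, together with one extra edge $K_2$ when $j-1$ is odd, is a triangle-free cactus of order $\lfloor 5(j-1)/2\rfloor$ with independence number $j-1$, so $R_0^{\mathcal{CA}}(3,j)\geq \lfloor 5(j-1)/2\rfloor+1$ (the case $j=1$ being immediate).

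The matching upper bounds reduce to two sharp independence lower bounds: every cactus of order $n$ has $\alpha\geq\lceil n/3\rceil$, and every triangle-free cactus of order $n$ has $\alpha\geq\lceil 2n/5\rceil$. The first is for free, since every cactus has minimum degree at most two (a non-cut vertex of a leaf block) and is therefore $2$-degenerate, hence $3$-colourable. For both I would run one induction on $n$, peeling off a leaf block $B$ with cut vertex $c$ of a connected cactus $G$ (the disconnected case being handled by summing the bound over components): if $G$ is $2$-connected it is a single cycle $C_\ell$ and $\alpha=\lfloor \ell/2\rfloor$ visibly clears the target; if $B$ is a bridge, delete both endpoints of that pendant edge, which lowers $\alpha$ by exactly one against two deleted vertices; and if $B=C_\ell$, delete all $\ell$ vertices of $B$ and adjoin, to a maximum independent set of $G-V(B)$, a maximum independent set of the path $C_\ell-c$, whose $\lceil (\ell-1)/2\rceil$ private vertices have no neighbour outside $B$. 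Each move leaves a smaller cactus (triangle-free if $G$ was), and the step survives the ceilings because $\lceil (\ell-1)/2\rceil\geq \ell/3$ for $\ell\geq 3$ and $\lceil (\ell-1)/2\rceil+\lceil 2(n-\ell)/5\rceil\geq\lceil 2n/5\rceil$ for $\ell\geq 4$. Feeding $n=3(j-1)+1$ and $n=\lfloor 5(j-1)/2\rfloor+1$ into these two bounds then forces an independent set of size $j$, completing the proof.

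I expect the triangle-free estimate $\alpha\geq\lceil 2n/5\rceil$ to be the crux. One has to recognise that the extremal block is $C_5$ rather than $C_4$ (the governing constant is the fractional chromatic number $5/2$ of $C_5$, not an integer), so no ordinary colouring bound gives the constant and the block induction is unavoidable; and the rounding has to be arranged so that the one borderline length $\ell=4$ --- where the real-variable inequality $(\ell-1)/2\geq 2\ell/5$ fails and only the ceiling on the left restores it --- still goes through. Once this bookkeeping with $\lceil\cdot\rceil$ is handled, the rest (the constructions and the $\lceil n/3\rceil$ bound) is routine.
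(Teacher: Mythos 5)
The paper does not prove this statement at all: it is imported verbatim from Belmonte et al.\ \cite{ramseygraphclassesHeggernes}, so there is no internal proof to measure yours against. Judged on its own, your argument is correct and complete. The reduction is right: cacti form a hereditary, $K_4$-free class (a $K_4$ would force an edge onto two cycles), so for $i\geq 4$ only the independent-set alternative is live, and for $i=3$ only triangle-free cacti matter. Your extremal graphs $(j-1)K_3$ and $\big\lfloor\tfrac{j-1}{2}\big\rfloor C_5$ (plus a $K_2$ when $j-1$ is odd) have orders $3(j-1)$ and $\big\lfloor\tfrac{5(j-1)}{2}\big\rfloor$ and independence number exactly $j-1$, giving the lower bounds. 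The two independence bounds, $\alpha\geq\lceil n/3\rceil$ from $2$-degeneracy and $\alpha\geq\lceil 2n/5\rceil$ for triangle-free cacti, do force $\alpha\geq j$ at $n=3(j-1)+1$ and $n=\big\lfloor\tfrac{5(j-1)}{2}\big\rfloor+1$ respectively (both remainders check out). The block induction behind the $2n/5$ bound is sound: in a leaf block $B$ with cut vertex $c$, every vertex of $B-c$ lies in no other block and hence has all its neighbours inside $B$, which legitimizes adjoining the $\lceil(\ell-1)/2\rceil$ independent vertices of the path $C_\ell-c$ to an independent set of $G-V(B)$; the disconnected case rests on $\lceil a\rceil+\lceil b\rceil\geq\lceil a+b\rceil$; and you correctly isolate $\ell=4$ as the one length where the real-variable inequality $(\ell-1)/2\geq 2\ell/5$ fails and the ceiling must be invoked. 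The only cosmetic remark is that the phrase ``lowers $\alpha$ by exactly one'' in the pendant-edge step should be read as the one-sided estimate $\alpha(G)\geq\alpha(G-\{c,v\})+1$, which is all the computation uses.
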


\noindent We will extend this momentarily. But first, we make a few observations.\\

\noindent In any graph every $j$-set is $k$-sparse and $k$-dense provided $j\leq k+1$ due to Remark \ref{rem:generalsmallvaluesofiandj}. Hence, in the cases where $i$ or $j$ are at most $k+1$ we have $R_{k}^{\mathcal{CA}}(i,j)=\min\{i,j\}$. So in the following when noting Ramsey numbers of the sort $R_{k}^{\mathcal{CA}}(i,j)$ let us assume $i$ and $j$ are both at least $k+2$.

\begin{remark}\label{rem:cactuskplustwo}
For all $k\geq 1$ and $j\geq k+2$, we have $R_{k}^{\mathcal{CA}}(k+2,j)=j$.
\end{remark}

\begin{proof}
Since the empty graph is a cactus graph, the result follows from Lemma \ref{lem:generalkplustwo}. \qed
\end{proof}

\begin{lemma}\label{lem:cactuskdense}
Let $k\geq1$, $i\geq k+4$. Then, a cactus $G$ has no $k$-dense $i$-set.
\end{lemma}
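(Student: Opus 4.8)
The plan is to show that any candidate $k$-dense set is forced to violate the degree condition, mirroring the argument used for forests in Lemma~\ref{lem:forestkdense}. Suppose, for contradiction, that the cactus $G$ has a $k$-dense set $D$ with $|D|=i\geq k+4$ (if $|V(G)|<i$ there is nothing to prove). By the definition of a $k$-dense set, $D$ is $k$-sparse in the complement of $G$, so every vertex of $D$ misses at most $k$ other vertices of $D$; equivalently, in the induced subgraph $G[D]$ every vertex has degree at least $|D|-1-k = i-1-k \geq 3$. Thus $G[D]$ has minimum degree at least $3$.

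Next I would observe that $G[D]$ is itself a cactus. Being a cactus — i.e.\ every block is a single vertex, a single edge, or a cycle, equivalently no two distinct cycles share an edge — is a hereditary property under taking induced subgraphs, since deleting vertices cannot create a new cycle nor merge two edge-disjoint cycles into a configuration sharing an edge. Hence $G[D]$ is a cactus on $i$ vertices.

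Finally I would invoke the standard fact (already used informally in the discussion preceding the statement) that every cactus contains a vertex of degree at most $2$: if the cactus has no edges this is immediate, and otherwise its block–cut tree has a leaf block $B$, and any vertex of $B$ other than its unique cut vertex has all its neighbors inside $B$, so it lies in a single edge or a single cycle and has degree at most $2$. Applying this to $G[D]$ contradicts the minimum-degree-at-least-$3$ conclusion from the first paragraph, so no such $D$ can exist.

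I do not anticipate a real obstacle here; the proof is essentially a formalization of the heuristic already stated in the text. The only two points needing a sentence of justification are the heredity of the cactus property under induced subgraphs and the existence of a vertex of degree at most $2$ in every cactus, and both are routine structural observations.
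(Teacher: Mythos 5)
Your proposal is correct and follows essentially the same route as the paper: the paper also notes that $G[D]$ is a cactus, hence contains a vertex $v$ with $d_D(v)\leq 2$, which therefore misses at least $i-3\geq k+1$ vertices of $D$. You merely phrase it as a contradiction and spell out the two structural facts (heredity of the cactus property and the existence of a vertex of degree at most $2$) that the paper asserts without proof in the paragraph preceding the lemma.
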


\begin{proof}
Let $G$ be a cactus of order $n$. If $n\leq i-1$, then the claim is trivial. Assume $n\geq i$ and consider a subset of vertices $D$ of order $i$. Since $G[D]$ is a cactus, there is a vertex $v\in D$ such that $d_D(v)\leq 2$. Thus, $v$ misses at least $i-3$ other vertices in $D$. Since $i-3\geq k+1$, $D$ is not a $k$-dense $i$-set. \qed\\
\end{proof}

\noindent As no cactus contains a $k$-dense set of order $k+4$, we note that determination of $R_{k}^{\mathcal{CA}}(i,j)$ in the case where $i\geq k+4$ amounts to determining the smallest $n$ with the property that every cactus of order $n$ contains a $k$-sparse set of order $j$. First, we will start with the examination of $1$-defective case.

\begin{lemma}\label{lem:cactus1sparse}
If $G$ is a cactus of order $n$ then $\alpha_1(G)\geq \Big\lfloor\dfrac{n}{2}\Big\rfloor+r(n)$ where $$r(n)=\begin{cases}
0, & \text{ if } n\equiv 0\pmod{4}\\
1, & \text{ otherwise } 
\end{cases}$$
\end{lemma}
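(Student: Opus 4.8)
The plan is to prove $\alpha_1(G)\geq \lfloor n/2\rfloor + r(n)$ by induction on $n$, peeling off a small convenient piece of the cactus whose $1$-sparse set we control exactly, and invoking the inductive hypothesis on the remainder. Since by Remark \ref{rem:ksparseunion} the parameter $\alpha_1$ adds over connected components, I would first reduce to $G$ connected (handling the disconnected case by summing; a short arithmetic check shows the per-component bounds $\lfloor n_\ell/2\rfloor+r(n_\ell)$ sum to at least $\lfloor n/2\rfloor+r(n)$, using that $r$ is subadditive in the relevant sense once one component is nonempty). For the base cases $n\leq 4$, one checks directly: a cactus on at most $4$ vertices that is not itself $1$-sparse must contain a vertex of degree $\geq 2$ with the "wrong" structure, but in fact for $n\le 3$ every graph is trivially handled and for $n=4$ one verifies $\alpha_1\ge 3=\lfloor4/2\rfloor+1$ by noting the only obstruction would be $K_4$, which is not a cactus.

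For the inductive step with $G$ connected and $n\ge 5$, I would root the argument at an end block $B$ (a block with exactly one cut vertex $c$), which exists in any connected graph with at least one cut vertex; if $G$ is $2$-connected it is a single cycle or edge and is handled separately. The cases are: (i) $B$ is a pendant edge $cv$, so $v$ is a pendant vertex — then I would look at a penultimate vertex and remove a short path; (ii) $B$ is a cycle $C$ of length $\ell$ attached at $c$ — then $C-c$ is a path on $\ell-1$ vertices, which is $1$-sparse, giving a gain of $\ell-1$ vertices while deleting $\ell$, i.e. "efficiency" $(\ell-1)/\ell$. The delicate point is to choose how many vertices to strip so that the deleted count $m$ lands in a residue class mod $4$ that makes $\lfloor m/2\rfloor$ (the vertices we keep from the stripped part) plus $r(n-m)$ add up to at least $\lfloor n/2\rfloor+r(n)$. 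Concretely I would strip a piece of size $m\in\{2,3,4,5\}$ — a pendant vertex together with its penultimate neighbour, or a $4$-cycle, or a $5$-cycle minus... — always keeping $\lceil (m-1)/2\rceil$ or more independent vertices from it, and then case-split on $n \bmod 4$.

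The main obstacle I expect is the bookkeeping of the additive constant $r(n)$: deleting $m$ vertices changes the residue of $n$ mod $4$, and the bound $\lfloor (n-m)/2\rfloor + r(n-m) + (\text{kept from the piece}) \ge \lfloor n/2\rfloor + r(n)$ is tight in several residue classes, so one cannot afford to "waste" a vertex. This forces a careful choice of which substructure to remove depending on $n\bmod 4$ and on the length of the end cycle $\ell \bmod 4$ (or $\ell\bmod 2$); for instance when $n\equiv 0$ one has no slack and must extract a piece on which $\alpha_1$ is exactly $m/2$ with $r(n-m)=r(n)=0$, i.e. $m\equiv 0\pmod 4$, which is why $4$-cycles and pairs-of-pendant-structures are the natural units to peel. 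Once the peeling units and the residue analysis are fixed, each subcase is a one-line inequality, so the proof is essentially a finite case check organized around end blocks and $n\bmod 4$.
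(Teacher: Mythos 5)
Your overall strategy (induction on $n$, peel off a small piece of the cactus, add its contribution to the inductive $1$-sparse set) is the same as the paper's, but the proposal contains concrete errors and leaves the genuinely hard part unresolved. First, the claim in your case (ii) that for an end cycle $C$ attached at $c$ the set $C-c$ is $1$-sparse is false whenever $C$ has length at least $4$: then $C-c$ is a path on at least three vertices, whose internal vertices have degree $2$, so the advertised ``efficiency $(\ell-1)/\ell$'' is not available. Second, the base case $n=4$ is miscomputed: since $4\equiv 0\pmod 4$ we have $r(4)=0$, so the lemma only requires $\alpha_1(G)\geq 2$ there; the bound $\alpha_1(G)\geq 3$ that you propose to verify is actually false for the cactus $C_4$ (any three of its vertices induce a $P_3$), so as written your induction would be grounded on a false statement.

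Third, and most importantly, the residue bookkeeping that you flag as ``the main obstacle I expect'' is exactly where the proof lives, and you have not exhibited peeling units that make it work; you only observe that units with $m\equiv 0\pmod 4$ would be convenient. The paper's resolution is a single uniform device: in every case it deletes exactly $4$ vertices and keeps exactly $2$ of them, so that $n-4\equiv n\pmod 4$, hence $r(n-4)=r(n)$ and $\big\lfloor\frac{n-4}{2}\big\rfloor+r(n)+2=\big\lfloor\frac{n}{2}\big\rfloor+r(n)$, with no case split on $n\bmod 4$ at all. Making this uniform requires two nonobvious moves that are absent from your sketch: when the natural unit is too small (an end triangle, or a maximal path on three vertices) one deletes it together with an \emph{arbitrarily chosen extra vertex} to pad the deleted count to $4$; when it is too large (an end cycle on five or more vertices, or a longest path on five or more vertices) one deletes only a $4$-vertex subpath anchored at the cut vertex (respectively, the two end-pairs of the path) and keeps two suitable vertices of it, rather than removing the whole block. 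Without these moves, or an explicit argument that a piece with $m\equiv 0\pmod 4$ and $1$-sparse yield exactly $m/2$ always exists, the inductive step does not close.
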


\begin{proof}
We will proceed by induction on $n$. The statement is clearly true if $n\leq4$. So suppose the lemma holds for all cacti on less than $n\geq5$ vertices. Let $G$ be a cactus with exactly $n$ vertices. Suppose $G$ contains a cycle $C$ which is either an end block or an isolated cycle. If $C$ contains a cut vertex call it $u$. Otherwise, designate any vertex of $C$ as $u$. If $C$ is a triangle choose some vertex $v$ arbitrarily from $G-C$. Remove $v$ as well as $C$ from $G$ and perform induction on the $n-4$
vertices that remain. As $n$ and $n-4$ have the same parity modulo $4$, this produces a $1$-sparse set of order $\Big\lfloor\dfrac{n-4}{2}\Big\rfloor+r(n)$. Add to this $1$-sparse set the two vertices of $C-u$ and obtain the desired result. Similarly, if $C$ is a $4$-cycle, apply induction to find a $1$-sparse set of order $\Big\lfloor\dfrac{n-4}{2}\Big\rfloor+r(n)$ in $G-C$ and add two vertices from $C-u$ to produce the desired result. So suppose $C$ has five or more
vertices. Let $P$ be a path on four vertices in $C$ that begin at $u$. As before, perform induction on $G-P$. Take the result and combine them with the two internal vertices of $P$ and produce the desired result.\\

\noindent So let us suppose that $G$ contains no end cycles. Let $P$ be a longest path in $G$. Suppose $P$ has length four or more. Let $u$ $(v)$ be the first (second) vertex of $P$ and $w$ $(x)$ be the penultimate (last). Remove these four vertices from $G$ and perform induction on what remains. This produces a $1$-sparse set and when we add to it $u$ and $x$, the desired result is obtained. Thus, every path in $G$ contains at most three vertices. Suppose $P$ is a path on exactly three vertices. Call them $u$, $v$ and $w$ where $v$ is the central vertex. Remove from $G$ these three vertices with one other vertex chosen arbitrarily from $G$. As above, perform induction then add to the $1$-sparse set $u$ and $w$ and to produce the desired result. If $G$ has no path of order three, then the entire graph is $1$-sparse and the desired result holds. \qed
\end{proof}

\begin{theorem}\label{thm:cactus1sparse}
Let $i\geq 5$ and $j\geq 1$. Then, $R_{1}^{\mathcal{CA}}(i,j)=\begin{cases}
2j-1, & \text{ if }j \text{ is odd} \\
2j-2, & \text{ if }j \text{ is even} 
\end{cases}$
\end{theorem}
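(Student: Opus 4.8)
The plan is to prove matching upper and lower bounds, leaning on Lemma~\ref{lem:cactus1sparse} for the upper bound and exhibiting explicit extremal cacti for the lower bound. Since $i \geq 5 \geq k+4$ with $k=1$, Lemma~\ref{lem:cactuskdense} tells us no cactus has a $1$-dense $i$-set, so the whole problem reduces to finding the smallest $n$ such that every cactus on $n$ vertices has a $1$-sparse $j$-set; equivalently, $R_1^{\mathcal{CA}}(i,j)$ is one more than the largest order of a cactus $G$ with $\alpha_1(G) \leq j-1$.

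For the upper bound I would plug $n = R_1^{\mathcal{CA}}(i,j)$ (the claimed value) into Lemma~\ref{lem:cactus1sparse} and check that $\lfloor n/2 \rfloor + r(n) \geq j$ in both parity cases. If $j$ is odd, $n = 2j-1$ is odd, so $n \not\equiv 0 \pmod 4$ forces $r(n)=1$ and $\lfloor (2j-1)/2 \rfloor + 1 = (j-1)+1 = j$. If $j$ is even, $n = 2j-2 \equiv 2 \pmod 4$, so again $r(n)=1$ and $\lfloor (2j-2)/2 \rfloor + 1 = (j-1)+1 = j$. Hence every cactus on $n$ vertices has a $1$-sparse $j$-set, giving $R_1^{\mathcal{CA}}(i,j) \leq n$ in each case. (One should also double-check the small-$j$ boundary, e.g. $j=1,2$, against Remark~\ref{rem:generalsmallvaluesofiandj}, though the statement allows $j \geq 1$ so this needs a quick sanity pass.)

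For the lower bound I need, for each parity of $j$, a cactus $G$ on $n-1$ vertices with $\alpha_1(G) \leq j-1$ and no $1$-dense $i$-set. The natural candidate is a disjoint union of triangles: a triangle contributes $2$ to a $1$-sparse set (any two of its three vertices), and $3$ vertices per triangle. If $j$ is odd, write $j - 1 = 2m$ and take $G = m K_3$ plus possibly a controlled leftover; $G$ has $3m$ vertices... but I want $2j-2 = 4m$ vertices with $\alpha_1 = 2m = j-1$, so pure triangles undershoot. Instead I would use a union of $4$-cycles: a $C_4$ has $\alpha_1 = 2$ (two opposite vertices; three vertices of a $C_4$ induce a path $P_3$ whose center has degree $2$) and $4$ vertices, giving exactly the ratio $2:4$ I want. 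So for $j$ even, take $G = \frac{j-2}{2} C_4 \cup$ (one extra $C_4$ adjusted) — more precisely I expect $G = (\text{copies of } C_4) $ plus a small gadget totalling $2j-3$ vertices with $\alpha_1 = j-1$; for $j$ odd I'd mix in one triangle or a path to fix the parity and hit $2j-2$ vertices with $\alpha_1 = j-1$. In all cases $G$ is a cactus with maximum degree $\leq 2$, so by Lemma~\ref{lem:cactuskdense} it has no $1$-dense $i$-set, and by construction no $1$-sparse $j$-set, proving $R_1^{\mathcal{CA}}(i,j) > |V(G)|$.

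The main obstacle is getting the extremal constructions exactly right at both parities and at the small-$j$ boundary: I need the number of vertices to be precisely $n-1$ and $\alpha_1$ to be precisely $j-1$, which means carefully choosing how many $C_4$'s versus $K_3$'s (or isolated edges/vertices) to use, and verifying $\alpha_1$ of the union via Remark~\ref{rem:ksparseunion} (additivity of $\alpha_k$ over components). The inductive upper bound is essentially free once Lemma~\ref{lem:cactus1sparse} is in hand; all the real care goes into the tightness examples and checking that the parity arithmetic in $r(n)$ lines up with the two cases of the formula.
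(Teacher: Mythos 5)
Your proposal is correct and follows essentially the same route as the paper: the upper bound is exactly the paper's application of Lemma~\ref{lem:cactus1sparse} (checking that $r(n)=1$ for $n=2j-1$ and $n=2j-2$), and the lower bound is the paper's construction via disjoint unions of $4$-cycles using $\alpha_1(C_4)=2$ and additivity over components. The only loose end is pinning down the extremal graphs, which the paper does without any triangle or path gadget: for odd $j$ take $\tfrac{j-1}{2}$ disjoint copies of $C_4$ (exactly $2j-2$ vertices, $\alpha_1=j-1$), and for even $j$ take $\tfrac{j-2}{2}$ copies of $C_4$ together with one isolated vertex (exactly $2j-3$ vertices, $\alpha_1=j-1$).
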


\begin{proof}
Suppose $j$ is odd and $G$ is a cactus of order $2j-1$. As $2j-1$ isn't divisible by four, from Lemma \ref{lem:cactus1sparse}, we know it has a $1$-sparse set on at least $j$ vertices. Now, if we take the disjoint union of $\dfrac{j-1}{2}$ copies of $4$-cycles, clearly we obtain a cactus on $2j-2$ vertices which has no $1$-sparse $j$-set. Also, this graph has no $1$-dense $i$-set from Lemma \ref{lem:cactuskdense} and we are done since $i\geq 5$.\\

\noindent Suppose $j$ is even and $G$ is a cactus of order $2j-2$. Note, $2j-2$ isn't divisible by four. Again, proceed with Lemma \ref{lem:cactus1sparse} and note $G$ has a $1$-sparse set of order $j$. Now, if we take the disjoint union of $\dfrac{j-2}{2}$ copies of $4$-cycles and an isolated vertex, clearly we obtain a cactus graph on $2j-3$ vertices which has no $1$-sparse $j$-set. Also, this graph has no $1$-dense $i$-set from Lemma \ref{lem:cactuskdense} and we are done since $i\geq 5$. \qed \\
\end{proof}

\noindent Now, we will examine $k$-defective case for $k\geq 2$.

\begin{lemma}\label{lem:foresttocactus}
Every cactus contains a set of independent edges whose removal produces a forest.
\end{lemma}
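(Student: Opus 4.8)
The plan is to prove the statement by induction on $|V(G)|$, peeling off an end-block at each step. First I would reduce to the connected case: if $G$ has connected components $G_1,\dots,G_t$ and $M_\ell$ is a matching with $G_\ell-M_\ell$ a forest for each $\ell$, then $M=\bigcup_\ell M_\ell$ is a matching of $G$ with $G-M=\bigcup_\ell(G_\ell-M_\ell)$ a forest. So assume $G$ is a connected cactus. If $G$ has no cut vertex then it is a single block, so (being a cactus) it is $K_1$, $K_2$, or a cycle; in the first two cases $G$ is already a forest and $M=\emptyset$ works, and in the last case deleting any single edge turns the cycle into a path, so $M$ consisting of that one edge works. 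Otherwise $G$ has a cut vertex, hence (by standard block--cut-tree considerations) an end-block $B$ with a unique cut vertex $v$, and every vertex of $B$ other than $v$ is a non-cut vertex all of whose edges lie in $B$.

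\textbf{Case 1: $B$ is a bridge}, say $B=\{v,x\}$ with $x$ pendant. Apply induction to the connected cactus $G-x$ to get a matching $M'$ with $(G-x)-M'$ a forest. Then $M'$ is also a matching of $G$, and $G-M'$ is obtained from the forest $(G-x)-M'$ by attaching the pendant edge $vx$, hence is still a forest. \textbf{Case 2: $B$ is a cycle}. Delete from $G$ all vertices of $B$ except $v$; since $B$ is an end-block this removes exactly the edge set $E(B)$, leaving a connected cactus $G'$ with strictly fewer vertices. By induction there is a matching $M'$ of $G'$ with $G'-M'$ a forest. As $|E(B)|\ge3$, choose an edge $e\in E(B)$ not incident to $v$; both endpoints of $e$ lie in $V(B)\setminus\{v\}$, which is disjoint from $V(G')$, so $M:=M'\cup\{e\}$ is a matching of $G$. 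Deleting $e$ from the cycle $B$ leaves a path on $V(B)$ that meets $G'-M'$ only at $v$, so $G-M=(G'-M')\cup(B-e)$ is a forest with a path attached at a single vertex, hence a forest. This closes the induction.

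The only genuine obstacle is ensuring that the deleted edges form a \emph{matching}: a cut vertex of the cactus may lie on two distinct cycles, and deleting an edge incident to it from each of them would create a conflict. This is exactly why it is essential to peel off an \emph{end-block} (so its cut vertex $v$ is unique) and to delete an edge of it avoiding $v$ --- then both endpoints of the deleted edge are non-cut vertices that vanish from the remaining graph, so they cannot be re-used by any edge chosen later. The remaining facts needed are routine: an induced subgraph of a cactus is a cactus, a connected graph with a cut vertex has an end-block, and removing one edge from a cycle yields a path.
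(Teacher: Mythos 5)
Your proof is correct and follows essentially the same strategy as the paper's: both arguments inductively peel off a peripheral cycle and delete one of its edges chosen to avoid the vertex attaching that cycle to the rest of the graph, which is exactly what guarantees the deleted edges are independent. The only cosmetic difference is that you locate the peripheral piece as an end-block and induct on $|V(G)|$, whereas the paper inducts on the number of cycles and finds its peripheral cycle as one of two cycles that are furthest apart.
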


\begin{proof}
We will proceed by induction on the number of cycles. If $G$ is a cactus with no cycles the statement is obviously true. So suppose $G$ has at least one cycle and suppose the statement is true for all cacti with fewer cycles. Let $A$ be a component of $G$ containing a cycle. If $A$ has exactly one cycle, remove one edge from it and proceed by induction on what remains and the desired result follows. So suppose $A$ has more than one cycle. Choose two that are furthest apart and call them $C$ and $C'$. Let $v$ be the vertex in $C$ that is closest to $C'$. Choose some edge, say $e$, of $C$ that isn't incident with $v$. Let $B$ be the component of $G-v$ that contains $e$. Perform induction on $G-B$ and find an independent set of edges whose removal from $G-B$ produces a forest. Insert $e$ into this set and the proof is complete. \qed
\end{proof}

\begin{corollary}\label{cor:cactusksparse}
Let $k\geq 2$. If $G$ is a cactus of order $n$, then $\alpha_k(G)\geq \Big\lceil\dfrac{k}{k+1}n\Big\rceil$.
\end{corollary}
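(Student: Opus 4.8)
The plan is to derive this from the two lemmas already established in this section: Lemma~\ref{lem:foresttocactus}, which strips a cactus down to a forest by deleting a matching, and Lemma~\ref{lem:forestksparse}, which guarantees large sparse sets in forests. The one trick is to apply the forest bound with parameter $k-1$ instead of $k$, so as to leave "room" at each vertex for the single matching edge that will be put back.

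Concretely, first I would invoke Lemma~\ref{lem:foresttocactus} to fix a set $M$ of pairwise non-adjacent edges of $G$ such that $F := G - M$ is a forest; note $F$ has the same vertex set as $G$, so $F$ has order $n$. Since $k\geq 2$, we have $k-1\geq 1$, so Lemma~\ref{lem:forestksparse} applies with $k-1$ in place of $k$ and produces a $(k-1)$-sparse set $S\subseteq V(F)$ with
\[
|S|\;\geq\;\Big\lceil \dfrac{(k-1)+1}{(k-1)+2}\,n\Big\rceil \;=\;\Big\lceil \dfrac{k}{k+1}\,n\Big\rceil .
\]
It then remains to check that this same set $S$ is $k$-sparse in $G$. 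For any $v\in S$, the edges of $G$ incident with $v$ are exactly the edges of $F$ incident with $v$ together with the edges of $M$ incident with $v$ (since deleting $M$ from $G$ leaves $F$, the edge sets of $F$ and $M$ are disjoint and their union is $E(G)$). Restricting to $S$, the first group contributes $d_{F[S]}(v)\leq k-1$ because $S$ is $(k-1)$-sparse in $F$, and the second group contributes at most $1$ because $M$ is a matching. Hence $d_{G[S]}(v)\leq k$, so $S$ is a $k$-sparse set of $G$, giving $\alpha_k(G)\geq |S|\geq \big\lceil \tfrac{k}{k+1}n\big\rceil$.

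I do not foresee a genuine obstacle; the proof is short. The only points to state with care are that "removal of $M$ produces the forest $F$" literally means $E(G)=E(F)\,\dot\cup\,M$ (which is what makes the degree count above exact), and that the use of $k-1$ rather than $k$ in Lemma~\ref{lem:forestksparse} is exactly what absorbs the extra incidence coming from $M$ — this is also where the hypothesis $k\geq 2$ is used, since Lemma~\ref{lem:forestksparse} needs a nonnegative (here, positive) defect parameter.
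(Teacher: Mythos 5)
Your proof is correct and is essentially identical to the paper's own argument: both invoke Lemma~\ref{lem:foresttocactus} to delete a matching and obtain a spanning forest, apply Lemma~\ref{lem:forestksparse} with defect parameter $k-1$, and observe that reinserting the matching raises each degree by at most one. Your version is in fact slightly more careful than the paper's in spelling out that the relevant bound is on $d_{G[S]}(v)$ rather than on the degree in $G$.
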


\begin{proof}
Given $G$, a cactus of order $n$, let us invoke Lemma \ref{lem:foresttocactus} and find an
independent set of edges $E'$ whose removal leaves a spanning forest, say $F$. From Lemma \ref{lem:forestksparse}, we note $\alpha_{k-1}(F)\geq \Big\lceil\dfrac{k}{k+1}n\Big\rceil$. So, let $S$ be a $(k-1)$-sparse set of $ \Big\lceil\dfrac{k}{k+1}n\Big\rceil$ vertices of $F$. Note that each vertex of $S$ has degree at most $k$ in $G$ (since the edges in $E'$ contribute at most one to the degree of every vertex in $G$) and is thus $k$-sparse. \qed 
\end{proof}

\noindent We will slightly improve Corollary \ref{cor:cactusksparse} to obtain sharpness and consequently determine $R_k^{\mathcal{CA}}(i,j)$ for all $j,k\geq 2$ and $i\geq k+4$.

\begin{lemma}\label{lem:cactusksparse}
Let $k\geq 2$. If $G$ is a cactus of order $n$, then $\alpha_k(G)\geq \Big\lfloor\dfrac{k}{k+1}n\Big\rfloor+1$.
\end{lemma}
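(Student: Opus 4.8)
The plan is to refine the argument of Corollary~\ref{cor:cactusksparse} by being more careful about the small leftover piece. Recall that Corollary~\ref{cor:cactusksparse} gives $\alpha_k(G)\geq\big\lceil\tfrac{k}{k+1}n\big\rceil$, and $\big\lceil\tfrac{k}{k+1}n\big\rceil=\big\lfloor\tfrac{k}{k+1}n\big\rfloor+1$ unless $(k+1)\mid n$. So the only values of $n$ where the claimed bound $\big\lfloor\tfrac{k}{k+1}n\big\rfloor+1$ is stronger than what we already have are $n\equiv 0\pmod{k+1}$; for all other $n$ there is nothing to prove. I would therefore reduce immediately to the case $(k+1)\mid n$ and aim to find a $k$-sparse set of size $\tfrac{k}{k+1}n+1$.

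The natural approach is induction on $n$ (with $k$ fixed), peeling off a small piece that is ``efficient'' — i.e.\ that contributes more than a $\tfrac{k}{k+1}$ fraction of its vertices to a $k$-sparse set — and then applying Corollary~\ref{cor:cactusksparse} (or the inductive hypothesis) to the rest. Concretely: if $G$ itself is $k$-sparse we are done (as $\tfrac{k}{k+1}n+1\le n$ here since $n\ge k+1$; the base case $n=k+1$ needs a separate line). Otherwise $G$ has a vertex of degree $\ge k+1$, so it has a component with a cycle, or at least a component that is not $k$-sparse. Following the block-structure arguments used in Lemmas~\ref{lem:cactus1sparse} and~\ref{lem:forestksparse}, I would locate an end block or an extremal pair of large vertices and split off a chunk $C$ of size $m$ such that $C$ contains a $k$-sparse subset of size $\ge m-1$ with $m\le 2k+1$ or so — a star $K_{1,k+1}$ gives $m=k+2$ and a $k$-sparse subset of size $k+1=m-1$, a cycle of length up to $2k+2$ behaves similarly — so that $C$ alone accounts for the ``$+1$''. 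Then on $G-C$, of order $n-m$, invoke Corollary~\ref{cor:cactusksparse} to get a $k$-sparse set of size $\ge\big\lceil\tfrac{k}{k+1}(n-m)\big\rceil\ge\tfrac{k}{k+1}(n-m)$, and take the disjoint union with the $k$-sparse subset of $C$ (legitimate by Remark~\ref{rem:sparseunion}). The total is $\ge\tfrac{k}{k+1}(n-m)+(m-1)=\tfrac{k}{k+1}n+\tfrac{m}{k+1}-1\ge\tfrac{k}{k+1}n$, and since the left side is an integer $\ge\tfrac{k}{k+1}n=\big\lfloor\tfrac{k}{k+1}n\big\rfloor$ (using $(k+1)\mid n$), we must verify it actually reaches $\big\lfloor\tfrac{k}{k+1}n\big\rfloor+1$ — which holds as soon as $m\ge k+2$, i.e.\ the split-off piece genuinely contains a large vertex.

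The place where I expect to spend the most care is the case analysis for choosing $C$, exactly parallel to Lemma~\ref{lem:forestksparse}: one must handle (i) a component whose only large vertex is a star center, (ii) an end block that is a cycle, (iii) a component with two or more large vertices, where one picks the two furthest-apart large vertices $u,v$, deletes $v$, and argues about the degree of $v$ (degree $\ge k+2$ versus degree exactly $k+1$) to decide whether to remove $v$ together with its component-side or to remove a slightly larger set including the neighbor of $v$ toward the large-vertex side. In each subcase the deleted set $C$ must have size a multiple-of-$(k+1)$-friendly amount so that the inner $G-C$ still has the residue needed to run the induction cleanly, or else one falls back on Corollary~\ref{cor:cactusksparse} for $G-C$ directly. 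The bookkeeping of $\lceil\cdot\rceil$ versus $\lfloor\cdot\rfloor$ across the split is the only real subtlety; the graph-theoretic extraction of $C$ is routine given the two prior lemmas, so I would state it briskly and refer back to the proof of Lemma~\ref{lem:forestksparse} for the structural details rather than reproduce them in full.
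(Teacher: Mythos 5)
Your arithmetic framework is sound: the reduction to the case $(k+1)\mid n$ is correct (for all other $n$, Corollary \ref{cor:cactusksparse} already gives $\lceil\frac{k}{k+1}n\rceil=\lfloor\frac{k}{k+1}n\rfloor+1$), and the integrality trick showing that a peeled chunk $C$ of size $m\geq k+2$ with $\alpha_k(C)\geq m-1$ would supply the missing $+1$ is also correct. The genuine gap is precisely the step you declare routine. Extracting $C$ and, above all, verifying that its $k$-sparse part combines \emph{losslessly} with a $k$-sparse set of $G-C$ is the entire content of the paper's proof, which is the longest argument in the paper and is not obtained by transplanting Lemma \ref{lem:forestksparse}. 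The union is not a disjoint union in the sense of Remark \ref{rem:sparseunion}: for the natural choice $C=N[u]$ with $u$ a large vertex, a vertex of $N(u)$ may have up to $k$ further neighbours in $G-C$, all of which can appear in whatever $k$-sparse set is returned for $G-C$, destroying $k$-sparseness. The paper controls this by deleting additional sacrificial vertices before recursing, guided by a case analysis on furthest-apart \emph{cut} vertices (Assumption \ref{assum:cutvertex}) that exploits cactus-specific facts (open neighbourhoods are $1$-sparse; a vertex has at most two neighbours inside another vertex's neighbourhood) and has no analogue in the forest argument, because cycles create exactly the extra adjacencies that break the forest-style splits.

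Two further concrete failures. First, your accounting applies Corollary \ref{cor:cactusksparse} (with no $+1$) to $G-C$ and therefore needs the chunk to over-contribute; but in configurations that genuinely arise the best available chunk contributes only its fair share --- e.g.\ in the paper's case where $L_u\cup L_y\cup\{u,y,x,w\}$ (of size $2k+2$) is removed and only $2k$ vertices are kept, $\lceil\frac{k(n-2k-2)}{k+1}\rceil+2k$ equals $\frac{kn}{k+1}$ when $(k+1)\mid n$, one short of the target. The $+1$ must then be inherited from a full-strength inductive call on the remainder, so the induction has to carry the lemma itself rather than fall back on the corollary, and your size threshold $m\geq k+2$ with $\alpha_k(C)\geq m-1$ is not the right invariant. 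Second, $n=2k+2$ is a multiple of $k+1$ and hence is not dispatched by your reduction; there your framework forces the conclusion that $G-w$ is $k$-sparse for some single vertex $w$, a standalone claim that the paper proves with a dedicated half-page degree argument and that no choice of ``end block or extremal pair of large vertices'' delivers automatically.
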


\begin{proof}
Fix $k\geq2$. We will prove by strong induction on $n$. Firstly, if $n\leq k+1$, then there is nothing to prove. Also, if $k+2\leq n\leq 2k+1$, then the result follows from Corollary \ref{cor:cactusksparse}.\\

\noindent Assume $n=2k+2$. We need to show that $G$ has a vertex $u$ such that $G-u$ is $k$-sparse. Let $\Delta(G)$ be the degree of a maximum degree vertex in $G$. If $\Delta(G)\leq k$, then it is trivial. So assume $\Delta(G) \geq  k+1$ and let $u$ be a vertex of maximum degree. If $d(u)\geq k+2$, then we get $|G-N[u]|\leq k-1$. Since $G$ is a cactus, $N(u)$ is $1$-sparse, thus  any vertex in $N(u)$ has at most $1+(k-1)=k$ neighbors in $G-u$. Moreover, since $G$ is a cactus, any vertex in $G-N[u]$ is adjacent to at most two vertices in $N(u)$, so its degree is at most $2+(k-2)=k$ in $G-u$. Thus, $G-u$ is $k$-sparse.\\ 

\noindent So, assume $d(u)=k+1$. Then we get $|G-N[u]|=k$. Take a vertex $w\in G-N[u]$. Note that if $d(w)=k+1$, then since $G$ is a cactus, it has exactly two neighbors from $N(u)$ and it is adjacent to all vertices in $G-N[u]$. Let $v_1,v_2\in N(u)$ be the neighbors of $w$. We note that $N(u)-\{v_1\}$ and $(G-N[u])-\{w\}$ are $1$-sparse sets (since $G$ is a cactus). Moreover, since $\{u,v_1,w,v_2\}$ forms a cycle and $w$ is adjacent to all vertices in $G-N[u]$, there is only one edge between $N(u)-\{v_1\}$ and $G-N[u]$, namely $v_2w$. Thus, $G-v_1$ forms a $k$-sparse set and we are done. So, assume $d(w)\leq k$ for all $w\in G-N[u]$. We note that any vertex in $N(u)$ has at most $k$ neighbors in $G-u$ since $\Delta(G)=k+1$. Then, $G-u$ is $k$-sparse and we are done.\\

\noindent If $2k+3\leq n\leq 3k+2$, then again the result follows from Corollary \ref{cor:cactusksparse}. Hence let us assume the lemma holds for all cacti on less than $n\geq 3k+3$ vertices. Let $G$ be a cactus graph on exactly $n\geq 3k+3$ vertices. Firstly, we observe that if a vertex of $G$ has degree at least three, then it is a cut vertex. Since $k\geq 2$, a vertex $u$ with $d(u)\geq k+1$ is necessarily a cut vertex. Thus, if $G$ has at most two cut vertices, all vertices but the two cut vertices form a $k$-sparse $(n-2)$-set and the result follows. Besides, if $G$ is disconnected, take a component $D$ on $m$ vertices and apply induction on $G-D$ and $D$.  We obtain a $k$-sparse set of $G$ of order at least $\Bigg(\Big\lfloor\dfrac{(n-m)k}{k+1}\Big\rfloor+1\Bigg)+\Bigg(\Big\lfloor\dfrac{mk}{k+1}\Big\rfloor+1\Bigg)\geq \Big\lfloor\dfrac{nk}{k+1}\Big\rfloor+1$ and we are done. Therefore, we can assume $G$ is connected and has at least three cut vertices. Throughout the remainder of this proof, we will often use the fact that non-cut vertices have degree at most 2 (thus at most $k$) in $G$, implying that any subset of vertices with no cut vertex in $G$ is a $k$-sparse set. \\

\noindent Let $u$ and $v$ be two cut vertices of $G$ which are furthest apart. If $u$ and $v$ are adjacent, then all cut vertices form a clique. Since $G$ has no clique of size 4, it follows that $G$ has exactly three cut vertices, say $w$ for the third. Now, $G-\{u,v,w\}$ is $k$-sparse and we are done if $n\geq 3k+4$. Hence assume $n=3k+3$. Let $A=N(u)-\{v,w\}$, $B=N(v)-\{u,w\}$, $C=N(w)-\{u,v\}$. Note that $A$, $B$ and $C$ are pairwise disjoint and $|A\cup B\cup C|\leq 3k$. Hence, at least one of $A$, $B$ and $C$ has at most $k$ vertices, without loss of generality say $A$. Thus, $G-\{v,w\}$ is $k$-sparse and once again we are done.\\

\noindent In what follows, we can assume the two further apart cut vertices $u$ and $v$ are not adjacent. Note that since $u$ and $v$ are farthest apart, exactly one connected component of $G-u$ have cut vertices. Let $L_u$ be the union of connected components of $G-u$ which have no cut vertex, and $|L_u|=l$. \\

\noindent If $l\geq k$, perform induction on $(G-u)\backslash L_u$, and take a $k$-sparse set of size at least $\Big\lfloor\dfrac{(n-l-1)k}{k+1}\Big\rfloor+1$, say $J$. Observe that $J\cup L_u$ is $k$-sparse and we get $$|J\cup L_u|\geq \Big\lfloor\dfrac{(n-l-1)k}{k+1}\Big\rfloor+1+l=\Big\lfloor\dfrac{nk+l-k}{k+1}\Big\rfloor+1\geq\Big\lfloor\dfrac{nk}{k+1}\Big\rfloor+1.$$
\noindent If $d(u)\leq k$, perform induction on $G\backslash L_u$, and take a $k$-sparse set of size at least $\Big\lfloor\dfrac{(n-l)k}{k+1}\Big\rfloor+1$, say $J$. Since $u$ has at most $k$ neighbors, $J\cup L_u$ is $k$-sparse (even if $u\in J$). Hence we get $$|J\cup L_u|\geq \Big\lfloor\dfrac{(n-l)k}{k+1}\Big\rfloor+1+l=\Big\lfloor\dfrac{nk+l}{k+1}\Big\rfloor+1\geq\Big\lfloor\dfrac{nk}{k+1}\Big\rfloor+1.$$

\noindent As we are done in both of the above cases, we can assume $d(u)\geq k+1$ and $l\leq k-1$. Since $G-(L_u \cup \{u\})$ is connected and $G$ is a cactus, $u$ has at most two neighbors in $G-L_u$, thus $d(u)\leq (k-1)+2$. It follows that $l=k-1$ and $d(u)=k+1$ where $u$ is adjacent to all vertices in $L_u$ and has exactly two neighbors in $G-L_u$. Let $x$ and $y$ be the neighbors of $u$ in $G-L_u$. Since $x$ and $y$ belong to the same connected component of $G-(L_u \cup \{u\})$ and $u$ is adjacent to both $x$ and $y$, the fact that $G$ is a cactus implies that no vertex in $L_u$ has a neighbor in  $G-(L_u \cup \{u\})$. \\

\noindent If $x$ has at most $k-1$ neighbors in $G-\{u,y\}$ then we are done from what follows. Apply induction on $G-(L_u\cup\{u,y\})$ to obtain a $k$-sparse set of size at least $\Big\lfloor\dfrac{(n-k-1)k}{k+1}\Big\rfloor+1$, say $J$. Observe that $J\cup L_u\cup \{u\}$ is $k$-sparse since the only edge between $J$ and $L_u\cup\{u\}$ is $ux$, and $x$ has at most $k$ neighbors in $J\cup L_u \cup\{u\}$. Hence we get $$|J\cup L_u \cup\{u\}|\geq \Big\lfloor\dfrac{(n-k-1)k}{k+1}\Big\rfloor+1+k=\Big\lfloor\dfrac{nk}{k+1}\Big\rfloor+1.$$       

\noindent So, we can assume $x$ has at least $k$ neighbors in $G-\{u,y\}$, thus $d(x)\geq k+1$ and $x$ is a cut vertex. By symmetry, we also assume $y$ has at least $k$ neighbors in $G-\{u,x\}$, thus $d(y)\geq k+1$ and $y$ is a cut vertex.

The following summarizes the assumptions which are valid till the end of the proof. Let $d$ be the distance between the furthest apart two cut vertices $u$ and $v$. Then we have:
\begin{assumption}\label{assum:cutvertex}
Every cut vertex $z$ of distance $d$ from $v$ has the following properties:
\begin{enumerate}
\item[i)] $d(z)=k+1$,
\item[ii)] Let $L_z$ be the union of the connected components with no cut vertex in $G-z$. Then $|L_z|=k-1$ and no vertex in $L_z$ has a neighbor in $G-(L_z\cup \{z\})$,
\item[iii)] $z$ is adjacent to all vertices in $L_z$ and has exactly two neighbors in $G-L_z$, say $x_z$ and $y_z$, 
\item[iv)] $x_z$ (resp. $y_z$) is a cut vertex with at least $k$ neighbors in $G-\{z,y_z\}$ (resp. in $G-\{z,x_z\}$).
\end{enumerate}
\end{assumption}

Now, if $x$ and $y$ are adjacent, take a shortest path between $u$ and $v$.  Without loss of generality, this path passes through $x$. Hence any path between $v$ and $y$ has to pass through $x$ because otherwise we would get two cycles that intersect on the edge $xy$. Note that the distance between $x$ and $v$ is $d-1$, hence $y$ is a cut vertex of distance $d$ from $v$. Then we obtain a contradiction with Assumption \ref{assum:cutvertex} i) as $d(y)\geq k+2$. 

So, we assume in what follows that $x$ and $y$ are not adjacent. By definition of distance, both $x$ and $y$ are of distance at least $d-1$ to $v$. Moreover, both $x$ and $y$ are of distance at most $d$ from $v$ since $d$ is the maximum distance between two cut vertices of $G$. Now, if one of $x$ or $y$, say without loss of generality $y$, is of distance $d$ from $v$, then $y$ is a cut vertex satisfying Assumption \ref{assum:cutvertex}. Let $w$ be the neighbor of $y$ in $G-L_y$ other than $u$. Apply induction on $G-(L_u\cup L_y \cup \{u,y,x,w\})$ to obtain a $k$-sparse set of size at least $\Big\lfloor\dfrac{(n-2k-2)k}{k+1}\Big\rfloor+1$, say $J$. Observe that $J\cup L_u\cup L_y \cup \{u,y\}$ is $k$-sparse since there is no edge between $J$ and $L_u\cup L_y \cup \{u,y\}$ by Assumption \ref{assum:cutvertex} ii),  and each one of $u$ and $y$ has degree $k$ in  $J\cup L_u\cup L_y \cup \{u,y\}$. Hence we get $$|J\cup L_u\cup L_y \cup \{u,y\}|\geq \Big\lfloor\dfrac{(n-2k-2)k}{k+1}\Big\rfloor+1+2k=\Big\lfloor\dfrac{nk}{k+1}\Big\rfloor+1.$$       

So assume $v$ has distance $d-1$ with both of $x$ and $y$. Take a shortest path between $x$ (resp. $y)$ and $v$, say $x_1$ (resp. $y_1)$ is the neighbor of $x$ (resp. $(y)$ in this path. Clearly this path does not pass through $u$ since $u$ is on distance $d$ from $v$. Note that shortest paths from $v$ to $x$ and $v$ to $y$ can intersect and we can possibly have $x_1=y_1$, $x_1=v$ or $y_1=v$.\\

\noindent Since $xy \notin E$ and by Assumption \ref{assum:cutvertex} iv), we have $|N(x)-\{u,x_1\}|\geq k-1$. Consider a vertex $w\in N(x)-\{u,x_1\}$ and observe that any path between $v$ and $w$ contains $x$, thus the distance from $w$ to $v$ is $d$. If $w$ is a cut vertex, then by Assumption \ref{assum:cutvertex} ii) the set $L_w$ (which has no cut vertices) has size $k-1$. Assumption \ref{assum:cutvertex} iii) implies that $w$ has exactly one neighbor 
in $G - (L_w\cup \{x\})$, say $t$. If $t$ is a cut vertex then $t$ is adjacent to $x$ since its distance to $v$ is at most $d$, and by Assumption \ref{assum:cutvertex} i), we have $d(t)=k+1$. If $t$ is not a cut vertex, then we know $d(t)\leq 2$. In both cases, $t$ has at most $k$ neighbors other than $x$. Hence, apply induction on $G-(L_w\cup N[u]\cup\{w\})$, and take a $k$-sparse set of size at least $\Big\lfloor\dfrac{(n-2k-2)k}{k+1}\Big\rfloor+1$, say $J$. Observe that $t$ has at most $k$ neighbors in  $G-x$, and by Assumption \ref{assum:cutvertex} ii), the only edge between $J$ and $L_w\cup L_u\cup \{u,w\}$ is $wt$. Therefore, $J\cup L_w\cup L_u\cup \{u,w\}$ is $k$-sparse. Hence we get $$|J\cup L_w\cup L_u\cup \{u,w\}|\geq \Big\lfloor\dfrac{(n-2k-2)k}{k+1}\Big\rfloor+1+(k-1)+(k-1)+2=\Big\lfloor\dfrac{nk}{k+1}\Big\rfloor+1.$$

\noindent It remains to consider the case where none of the vertices in $N(x)-\{u,x_1\}$ and $N(y)-\{u,y_1\}$ are cut vertices.  Let $A_x=N(x)-\{u,x_1\}$ and $A_y=N(y)-\{u,y_1\}$ with $|A_x|=a_x$ and $|A_y|=a_y$. Note that any vertex $x_2\in A_x$ (respectively, $y_2\in A_y$) has degree at most two. Moreover, if there is a neighbor of $x_2$ (respectively, $y_2)$, say $x_3$ (respectively, $y_3)$, which is not in $N[x]$ (respectively, $N[y])$, then we get $d(x_3)\leq k$ (respectively, $d(y_3)\leq k)$ because otherwise $x_3$ (respectively, $y_3)$ would be a cut vertex with distance $d+1$ to $v$. Now, apply induction on $G-(A_x\cup A_y \cup L_u\cup\{u,x,y\})$, take a $k$-sparse set of size at least $\Big\lfloor\dfrac{(n-k-2-a_x-a_y)k}{k+1}\Big\rfloor+1$, say $J$. Note that any edge between $J$ and $A_x\cup A_y \cup L_u\cup\{u\}$ is incident to a vertex in $A_x\cup A_y$. Since any neighbor of a vertex in $A_x\cup A_y$ has degree at most $k$, the set $J\cup A_x\cup A_y \cup L_u\cup\{u\}$ is $k$-sparse. Hence we get
\begin{eqnarray*}
|J\cup A_x\cup A_y \cup L_u\cup\{u\}|&\geq& \Big\lfloor\dfrac{(n-k-2-a_x-a_y)k}{k+1}\Big\rfloor+1+a_x+a_y+(k-1)+1\\
&=& \Big\lfloor\dfrac{nk+a_x+a_y-k}{k+1}\Big\rfloor+1 \\
&\geq& \Big\lfloor\dfrac{nk}{k+1}\Big\rfloor+1
\end{eqnarray*}
\noindent since $a_x,a_y\geq k-1$ and so we have $a_x+a_y\geq 2k-2\geq k$. As a result, in all cases, we have $\alpha_k(G)\geq \Big\lfloor\dfrac{nk}{k+1}\Big\rfloor+1$ and we are done. \qed
\\
\end{proof}

\noindent The following result will be useful when constructing an extremal graph in Theorem \ref{thm:cactus}.

\begin{lemma}\label{lem:extremalcactus}
	Let $k\geq 2$. Define $G_{k,0}$ as $K_{1,k+1}$, and $G_{k,l}$ as in Figure \ref{fig:cactusextremal} for $l\geq1$. Then, $G_{k,l}$ is a cactus graph on $(k+1)(l+1)+1$ vertices with $\alpha_k(G_{k,l})=k(l+1)+1$ for all $l\geq 0$.
\end{lemma}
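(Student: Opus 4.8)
The statement packages three claims: that $G_{k,l}$ is a cactus, that it has $(k+1)(l+1)+1$ vertices, and that $\alpha_k(G_{k,l})=k(l+1)+1$. The first two are immediate from the description of $G_{k,l}$: for $l=0$, $K_{1,k+1}$ is a tree (hence a cactus) on $k+2=(k+1)\cdot 1+1$ vertices, and for $l\geq 1$ one reads the vertex count and the block structure off Figure~\ref{fig:cactusextremal}. So the real content is the evaluation of $\alpha_k$, which I would split into the two inequalities.

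For $\alpha_k(G_{k,l})\geq k(l+1)+1$ there is nothing to do beyond quoting Lemma~\ref{lem:cactusksparse}: with $n=(k+1)(l+1)+1$ we get $\big\lfloor\tfrac{nk}{k+1}\big\rfloor+1=k(l+1)+\big\lfloor\tfrac{k}{k+1}\big\rfloor+1=k(l+1)+1$ (and for $l=0$ this coincides with the obvious fact that the $k+1$ leaves of $K_{1,k+1}$ form a $k$-sparse set, while the centre cannot be added without acquiring degree $k+1$).

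The upper bound $\alpha_k(G_{k,l})\leq k(l+1)+1$ is where the argument lives, and I would prove it by induction on $l$. The base case $l=0$ is the direct count for $K_{1,k+1}$: a $k$-sparse set either omits the centre, leaving at most $k+1$ leaves, or contains it, in which case it has at most $k$ further vertices. For the inductive step one uses that $G_{k,l}$ is built from a copy of $G_{k,l-1}$ by adjoining one more gadget of $k+1$ new vertices meeting $G_{k,l-1}$ in a single vertex $p$, so that deleting these $k+1$ vertices returns $G_{k,l-1}$. Given a $k$-sparse set $S$ of $G_{k,l}$, the trace $S\cap V(G_{k,l-1})$ induces a subgraph of $G_{k,l}[S]$, hence is $k$-sparse in $G_{k,l-1}$, and by induction $|S\cap V(G_{k,l-1})|\leq k\cdot l+1$; it therefore suffices to show that $S$ misses at least one of the $k+1$ new vertices. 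This is the crux: the new gadget is built around a vertex of degree $k+1$ and carries a cycle (it must, since a forest on $(k+1)(l+1)+1$ vertices would already have a larger $k$-sparse set by Lemma~\ref{lem:forestksparse}), so keeping all $k+1$ of its vertices — in whichever way they interact with $p$ — forces a vertex of degree exceeding $k$ inside $G_{k,l}[S]$.

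The step I expect to be the main obstacle is precisely this last local claim at the junction vertex $p$: one must case on whether $p\in S$ and on how the cycle of the newest gadget is positioned relative to $p$, in order to conclude that no $k$-sparse set can swallow the whole gadget; and if the decomposition into ``old part plus $l$ gadgets'' is not perfectly disjoint but overlaps in cut vertices (as a chained construction would), one additionally has to be careful about which gadget a shared cut vertex is charged to, so that the induction closes with the right additive constant $k$ rather than $k+1$. An alternative to induction would be to exhibit a partition of $V(G_{k,l})$ into $l+1$ blocks and argue that every $k$-sparse set omits at least one vertex per block, but making the blocks behave independently looks at least as delicate, so I would favour the inductive peeling.
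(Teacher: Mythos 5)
Your treatment of the vertex count, the cactus property, and the lower bound is fine; in fact quoting Lemma \ref{lem:cactusksparse} with $n=(k+1)(l+1)+1$ to get $\alpha_k(G_{k,l})\geq\big\lfloor\tfrac{nk}{k+1}\big\rfloor+1=k(l+1)+1$ is a clean alternative to the paper's explicit construction (the paper instead peels off the last gadget and adds $v$ together with its $k-1$ pendant neighbours to an inductively obtained sparse set). The problem is in the upper bound, and precisely at the step you yourself flagged as the crux.

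Your claim that every $k$-sparse set must miss at least one of the $k+1$ newly added vertices is false. Concretely, let $v$ be the unique vertex of degree $k+1$ in $G_{k,l}$, with pendant neighbours $p_1,\dots,p_{k-1}$ and degree-two neighbours $x,y$, so that $x,y,v$ and the previous hub $u$ form the last $4$-cycle and $G_{k,l}-(N[v]-\{x\})\cong G_{k,l-1}$. The $k+1$ deleted vertices $\{v,y,p_1,\dots,p_{k-1}\}$ induce a star $K_{1,k}$ centred at $v$ (the cycle of the last diamond straddles the cut: it uses $x$ and $u$, which stay in the smaller copy), so this set is itself $k$-sparse and can be extended by vertices of the $G_{k,l-1}$-part avoiding $x$. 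Hence the bound $|S|\leq(kl+1)+|S\cap\text{new}|$ only yields $k(l+1)+2$, and your induction does not close. Nor does it suffice to observe that swallowing the whole gadget forces $x\notin S$: you would then need $|S\cap V(G_{k,l-1})|\leq kl$ for sets avoiding $x$, which is not what the induction hypothesis gives (indeed $\alpha_k(G_{k,l-1}-x)\geq kl+1$ by Lemma \ref{lem:cactusksparse}). The paper closes this gap with a symmetry argument rather than a per-gadget deficiency count: assuming a $k$-sparse set $J_2$ of size $k(l+1)+2$, the decomposition through $x$ forces $N[v]-\{x\}\subseteq J_2$, whence $v$ already has $k$ neighbours in $J_2$ and so $x\notin J_2$; applying the identical argument to the second decomposition $G_{k,l}-(N[v]-\{y\})\cong G_{k,l-1}$ forces $N[v]-\{y\}\subseteq J_2$, i.e.\ $x\in J_2$, a contradiction. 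Some such two-sided argument (or an equivalent strengthening of the inductive statement) is genuinely needed; the single-gadget local claim you propose cannot be repaired as stated.
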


\begin{proof}
We proceed by induction on $l$. If $l=0$, it is trivial. Assume $\alpha_k(G_{k,l-1})=kl+1$ for some $l\geq 1$ and consider $G_{k,l}$. Take the unique vertex $v\in G_{k,l}$ of degree $k+1$. Observe that $v$ has exactly $k-1$ neighbors of degree one, and exactly two neighbors of degree two. Let $x$ and $y$ be the neighbors of $v$ with degree two. Note that $G_{k,l}-\{N[v]-\{x\}\}$ is isomorphic to $G_{k,l-1}$, hence it has a $k$-sparse set of size $kl+1$, say $J_1$. So we obtain $\alpha_k(G_{k,l})\geq kl+1+k=k(l+1)+1$ since $J_1\cup (N[v]-\{x,y\})$ is $k$-sparse.\\

\noindent Let us now prove that $\alpha_k(G_{k,l})\leq kl+1+k=k(l+1)+1$. Assume for a contradiction $\alpha_k(G_{k,l})\geq k(l+1)+2$ and consider a $k$-sparse set on $k(l+1)+2$ vertices, say $J_2$. By the induction hypothesis, we have $|J_2\cap (G_{k,l}-(N[v]-\{x\}))|\leq kl+1$. This implies $|J_2|=k(l+1)+2$, we get $|J_2\cap (N[v]-\{x\})|\geq k+1$. Since there are exactly $k+1$ vertices in $N[v]-\{x\}$, we have $(N[v]-\{x\}) \subseteq J_2$. However, $v$ can have at most $k$ neighbors in $J_2$ and $v$ has exactly $k$ neighbors in $N[v]-\{x\}$. It follows that $x\notin J_2$. Note that $G_{k,l}-\{N[v]-\{y\}\}$ is also isomorphic to $G_{k,l-1}$. By symmetry, this implies $y\notin J_2$. However, this contradicts  $(N[v]-\{x\}) \subseteq J_2$ and the result follows. \qed

\end{proof}

\def\r{4pt}
\def\dy{1cm}
\tikzset{c/.style={draw,circle,fill=white,minimum size=\r,inner sep=0pt,
		anchor=center},
	d/.style={draw,circle,fill=black,minimum size=\r,inner sep=0pt, anchor=center}}

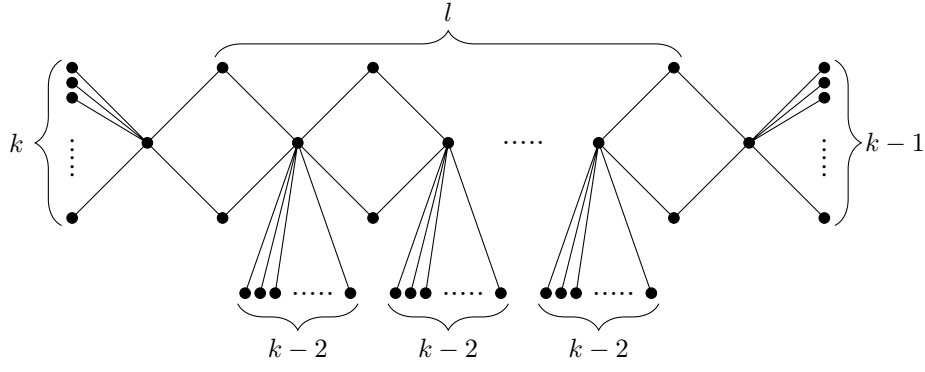
\begin{figure}[htbp]
	\begin{center}
		
		\begin{tikzpicture}

		\node[d] at (0,2) {};
		\node[d] at (1,1) {};
		\node[d] at (0,0) {};
		\node[d] at (-1,1) {};
		\draw (0,2) to (1,1)
		(0,2) to (-1,1)
		(0,0) to (1,1)
		(0,0) to (-1,1);
		
		\node[d] at (0.3,-1) {};
		\node[d] at (0.5,-1) {};
		\node[d] at (0.7,-1) {};
		\node[d] at (1.7,-1) {};
		\draw (1,1) to (0.3, -1)
		(1,1) to (0.5, -1)
		(1,1) to (0.7, -1)
		(1,1) to (1.7, -1);
		\node[thick,minimum size=3cm] at (1.2,-1) {.....};
		
		\node[d] at (2,2) {};
		\node[d] at (3,1) {};
		\node[d] at (2,0) {};
		\draw (1,1) to (2,2)
		(1,1) to (2,0)
		(3,1) to (2,2)
		(3,1) to (2,0);
		
		\node[d] at (2.3,-1) {};
		\node[d] at (2.5,-1) {};
		\node[d] at (2.7,-1) {};
		\node[d] at (3.7,-1) {};
		\draw (3,1) to (2.3, -1)
		(3,1) to (2.5, -1)
		(3,1) to (2.7, -1)
		(3,1) to (3.7, -1);
		\node[thick,minimum size=3cm] at (3.2,-1) {.....};

		\node[thick,minimum size=3cm] at (4,1) {.....};
		
		\node[d] at (6,2) {};
		\node[d] at (5,1) {};
		\node[d] at (6,0) {};
		\node[d] at (7,1) {};
		\draw (5,1) to (6,2)
		(5,1) to (6,0)
		(7,1) to (6,2)
		(7,1) to (6,0);
		
		\node[d] at (4.3,-1) {};
		\node[d] at (4.5,-1) {};
		\node[d] at (4.7,-1) {};
		\node[d] at (5.7,-1) {};
		\draw (5,1) to (4.3, -1)
		(5,1) to (4.5, -1)
		(5,1) to (4.7, -1)
		(5,1) to (5.7, -1);
		\node[thick,minimum size=3cm] at (5.2,-1) {.....};

		\node[d] at (8,2) {};
		\node[d] at (8,1.8) {};
		\node[d] at (8,1.6) {};
		\node[d] at (8,0) {};
		\draw (7,1) to (8,2)
		(7,1) to (8,1.8)
		(7,1) to (8,1.6)
		(7,1) to (8,0);
		\node[thick,minimum size=3cm, rotate=90] at (8,0.8) {.....};

		\node[d] at (-2,2) {};
		\node[d] at (-2,1.8) {};
		\node[d] at (-2,1.6) {};
		\node[d] at (-2,0) {};
		\draw (-1,1) to (-2,2)
		(-1,1) to (-2,1.8)
		(-1,1) to (-2,1.6)
		(-1,1) to (-2,0);
		\node[thick,minimum size=3cm, rotate=90] at (-2,0.8) {.....};
		\draw [decorate,decoration={brace,amplitude=10pt},xshift=-4pt,yshift=0pt]
		(-2,-0.1) -- (-2,2.1) node [black,midway,xshift=-0.6cm] 
		{\footnotesize $k$};
		
		\draw [decorate,decoration={brace,amplitude=10pt,mirror},xshift=0pt,yshift=-4pt]
		(0.2,-1) -- (1.8,-1) node [black,midway,yshift=-0.6cm] 
		{\footnotesize $k-2$};
		
		\draw [decorate,decoration={brace,amplitude=10pt,mirror},xshift=0pt,yshift=-4pt]
		(2.2,-1) -- (3.8,-1) node [black,midway,yshift=-0.6cm] 
		{\footnotesize $k-2$};
		
		\draw [decorate,decoration={brace,amplitude=10pt,mirror},xshift=0pt,yshift=-4pt]
		(4.2,-1) -- (5.8,-1) node [black,midway,yshift=-0.6cm] 
		{\footnotesize $k-2$};
		
		\draw [decorate,decoration={brace,amplitude=10pt,mirror},xshift=4pt,yshift=0pt]
		(8,-0.1) -- (8,2.1) node [black,midway,xshift=0.8cm] 
		{\footnotesize $k-1$};
		
		\draw [decorate,decoration={brace,amplitude=10pt},xshift=0pt,yshift=4pt]
		(-0.1,2) -- (6.1,2) node [black,midway,yshift=0.6cm] 
		{\footnotesize $l$ };

		\end{tikzpicture}
		\vspace{-1cm}
	\end{center}
	\caption{The graph $G_{k,l}$ for $k\geq 2$ and $l\geq 1$.} \label{fig:cactusextremal}
\end{figure}

\begin{theorem}\label{thm:cactus}
	Let $j,k\geq 2$ and $i\geq k+4$. Then, $R_k^{\mathcal{CA}}(i,j)=j-1+\Big\lceil\dfrac{j-1}{k}\Big\rceil$.
\end{theorem}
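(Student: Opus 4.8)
The plan is to reduce the theorem to the two lemmas just established about $k$-sparse sets in cacti. Since $i\ge k+4$, Lemma~\ref{lem:cactuskdense} shows that no cactus contains a $k$-dense $i$-set, so $R_k^{\mathcal{CA}}(i,j)$ is simply the least $n$ for which \emph{every} cactus of order $n$ has a $k$-sparse $j$-set. When $j\le k+1$, both the claimed formula and $R_k^{\mathcal{CA}}(i,j)$ equal $j$ by Remark~\ref{rem:generalsmallvaluesofiandj}, so I would assume $j\ge k+2$ and write $j-1=ks+t$ with $s\ge 1$ and $1\le t\le k$. Then $\lceil(j-1)/k\rceil=s+1$, so the target value is $n_0:=(k+1)s+t+1=j-1+\lceil(j-1)/k\rceil$.

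For the upper bound $R_k^{\mathcal{CA}}(i,j)\le n_0$, I would apply Lemma~\ref{lem:cactusksparse}: every cactus $G$ of order $n_0$ satisfies $\alpha_k(G)\ge \lfloor kn_0/(k+1)\rfloor+1$. Since $kn_0=k(j-1)+k\lceil(j-1)/k\rceil\ge k(j-1)+(j-1)=(k+1)(j-1)$ and $j-1$ is an integer, this gives $\lfloor kn_0/(k+1)\rfloor\ge j-1$, hence $\alpha_k(G)\ge j$; thus every cactus of order $n_0$ already has a $k$-sparse $j$-set.

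For the lower bound $R_k^{\mathcal{CA}}(i,j)>n_0-1$, I would exhibit an extremal cactus on $n_0-1$ vertices using the family of Lemma~\ref{lem:extremalcactus}: let $H$ be the disjoint union of $G_{k,s-1}$ with $t-1$ isolated vertices (here $s\ge 1$ makes $G_{k,s-1}$ well defined, and $t\ge 1$ makes $t-1\ge 0$). Then $H$ is a cactus of order $((k+1)s+1)+(t-1)=(k+1)s+t=n_0-1$, and by Remark~\ref{rem:ksparseunion} $\alpha_k(H)=\alpha_k(G_{k,s-1})+(t-1)=(ks+1)+(t-1)=ks+t=j-1<j$, so $H$ has no $k$-sparse $j$-set; it also has no $k$-dense $i$-set since $i\ge k+4$. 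Combining the two bounds yields $R_k^{\mathcal{CA}}(i,j)=n_0=j-1+\lceil(j-1)/k\rceil$.

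The genuinely hard work here is already carried out in Lemmas~\ref{lem:cactusksparse} and~\ref{lem:extremalcactus} (especially the long induction proving the $\lfloor kn/(k+1)\rfloor+1$ bound); the only points requiring care in the assembly are the floor/ceiling bookkeeping and checking that the extremal construction has \emph{exactly} $n_0-1$ vertices with $k$-sparseness number exactly $j-1$. In particular, it is the precise value $\alpha_k(G_{k,l})=k(l+1)+1$ — not merely an asymptotic estimate — that makes the lower-bound construction match the upper bound.
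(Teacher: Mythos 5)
Your proof is correct and follows essentially the same route as the paper: Lemma \ref{lem:cactuskdense} to dismiss $k$-dense sets, Lemma \ref{lem:cactusksparse} for the upper bound, and $G_{k,s-1}$ plus isolated vertices (Lemma \ref{lem:extremalcactus} with Remark \ref{rem:ksparseunion}) for the extremal graph. The only difference is cosmetic: by writing $j-1=ks+t$ with $1\le t\le k$ rather than $0\le t\le k-1$ you merge the paper's two cases ($t=0$ via Lemma \ref{lem:cactusksparse} with $G_{k,s-2}$ and $k-1$ isolated vertices, $t\neq 0$ via Corollary \ref{cor:cactusksparse} with $G_{k,s-1}$) into a single uniform argument, and your floor/ceiling checks are accurate.
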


\begin{proof}
	Firstly, if $j\leq k+1$, then we get $j-1+\Big\lceil\dfrac{j-1}{k}\Big\rceil=j$ and the result follows from Remark \ref{rem:generalsmallvaluesofiandj}. For $j\geq k+2$, let $j-1=ks+t$ for some $s\geq 1$ and $0\leq t\leq k-1$.

If $t\neq 0$, then we get $j-1+\Big\lceil\dfrac{j-1}{k}\Big\rceil=ks+t+s+1$. Let  $G$ be a cactus on $ks+t+s+1$ vertices. By Corollary \ref{cor:cactusksparse}, we get $$\alpha_k(G)\geq \Big\lceil\dfrac{k(ks+t+s+1)}{k+1}\Big\rceil=ks+\Big\lceil\dfrac{k(t+1)}{k+1}\Big\rceil= ks+\Big\lceil\dfrac{(k+1)t+(k-t)}{k+1}\Big\rceil =  ks+t+1=j.$$
To construct an extremal graph $H$, take the disjoint union of $G_{k,s-1}$ and $t-1$ isolated vertices. Observe that $H$ has $((k+1)s+1)+(t-1)=ks+t+s$ vertices. Thus, Lemma \ref{lem:extremalcactus} implies  $\alpha_k(H)=(ks+1)+(t-1)=ks+t=j-1$. Moreover, $H$ has no $k$-dense $i$-set from Lemma \ref{lem:cactuskdense} since $i\geq k+4$.

If $t=0$, then we get $s\geq 2$ and $j-1+\Big\lceil\dfrac{j-1}{k}\Big\rceil=(k+1)s$. Let  $G$ be a cactus on $(k+1)s$ vertices. By Lemma \ref{lem:cactusksparse}, we get $$\alpha_k(G)\geq\Big\lfloor\dfrac{k(k+1)s}{k+1}\Big\rfloor+1=ks+1=j.$$
Now, let $H$ be the graph obtained by taking the disjoint union of $G_{k,s-2}$ and $k-1$ isolated vertices. Observe that $H$ has $((k+1)(s-1)+1)+(k-1)=(k+1)s-1$ vertices. Thus, it follows from Lemma \ref{lem:extremalcactus} that $\alpha_k(H)=(k(s-1)+1)+(k-1)=ks=j-1$. Moreover, $H$ has no $k$-dense $i$-set from Lemma \ref{lem:cactuskdense} since $i\geq k+4$. \qed \\
\end{proof}

\noindent To summarize, we have the formula for defective Ramsey numbers in cacti for all $i\geq k+4$ from Theorems \ref{thm:cactus1sparse} and \ref{thm:cactus} whenever $k\geq 1$, which leaves $R_k^{\mathcal{CA}}(k+3,j)$ for $j\geq k+3$ as the only open case with Remark \ref{rem:cactuskplustwo}. Observe that the graph $H_j$ in Figure \ref{fig:extremalfor1-4} is a cactus on $2j-3$ vertices which has no $1$-dense $4$-set and no $1$-sparse $j$-set. Thus, we get $R_1^{\mathcal{CA}}(4,j)\geq 2j-2$. On the other hand, if a cactus on four vertices is not $1$-dense (or equivalently, it is not a 4-cycle), clearly it has a $1$-sparse $3$-set. Therefore, Lemma \ref{lem:cactus1sparse} can be modified as $\alpha_1(G)\geq \Big\lfloor\dfrac{n}{2}\Big\rfloor+1$ with the same induction step whenever $G$ has no $1$-dense $4$-set. Hence, if $G$ is a cactus on $2j-2$ vertices with no $1$-dense $4$-set, we get $\alpha_1(G)\geq j$ and so $R_1^{\mathcal{CA}}(4,j)=2j-2$. As for $k=2$ and $k=3$, the extremal graph $G_{k,l}$ for $R_k^{\mathcal{CA}}(k+4,j)$ in Figure \ref{fig:cactusextremal} has no $k$-dense $(k+3)$-set. Together with $R_k^{\mathcal{CA}}(k+3,j)\leq R_k^{\mathcal{CA}}(k+4,j)$ from Remark \ref{rem:defectiveinequalityforiandj}, this implies that $R_k^{\mathcal{CA}}(k+3,j)\leq R_k^{\mathcal{CA}}(k+4,j)$. It follows that the formula in Theorem \ref{thm:cactus} is still valid whenever $i=k+3$ and $k\in\{1,2,3\}$.\\

\def\r{4pt}
\def\dy{1cm}
\tikzset{c/.style={draw,circle,fill=white,minimum size=\r,inner sep=0pt,
		anchor=center},
	d/.style={draw,circle,fill=black,minimum size=\r,inner sep=0pt, anchor=center}}

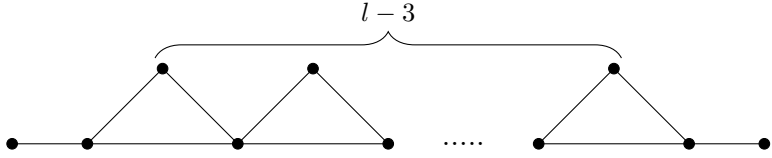
\begin{figure}[h]
	\begin{center}
		
		\begin{tikzpicture}

		\node[d] at (0,2) {};
		\node[d] at (1,1) {};
		\node[d] at (-1,1) {};
		\draw (0,2) to (1,1)
		(0,2) to (-1,1)
		(-1,1) to (1,1);
		
		\node[d] at (2,2) {};
		\node[d] at (3,1) {};
		\draw (1,1) to (2,2)
		(1,1) to (3,1)
		(3,1) to (2,2);
		
		\node[thick,minimum size=3cm] at (4,1) {.....};
		
		\node[d] at (6,2) {};
		\node[d] at (5,1) {};
		\node[d] at (7,1) {};
		\draw (5,1) to (6,2)
		(7,1) to (6,2)
		(7,1) to (5,1);
		
		\node[d] at (8,1) {};
		\draw (7,1) to (8,1);
		
		\node[d] at (-2,1) {};
		\draw (-1,1) to (-2,1);
		
		\draw [decorate,decoration={brace,amplitude=10pt},xshift=0pt,yshift=4pt]
		(-0.1,2) -- (6.1,2) node [black,midway,yshift=0.6cm] 
		{\footnotesize $l-3$};

		\end{tikzpicture}
		\vspace{-1.5cm}
	\end{center}
	\caption{The graph $H_{l}$ for $l\geq 4$.}\label{fig:extremalfor1-4}
\end{figure}

\noindent For $k\geq 4$, we will give a two sided bound for $R_k^{\mathcal{CA}}(k+3,j)$ to handle the case $j\leq 2k+1$. 
 
\begin{corollary}\label{cor:cactifinalize}
Let $k\geq 4$ and $j\geq k+2$. Then, $$j+\Big\lfloor\dfrac{j-1}{k+1}\Big\rfloor \leq R_k^{\mathcal{CA}}(k+3,j)\leq j-1+\Big\lceil\dfrac{j-1}{k}\Big\rceil$$
\end{corollary}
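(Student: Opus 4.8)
The plan is to establish the two bounds separately, treating the upper bound as a near-immediate consequence of already-proved results and spending the real effort on the lower bound.

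\textbf{Upper bound.} For the inequality $R_k^{\mathcal{CA}}(k+3,j)\leq j-1+\big\lceil\frac{j-1}{k}\big\rceil$, I would invoke Remark \ref{rem:defectiveinequalityforiandj}, which gives $R_k^{\mathcal{CA}}(k+3,j)\leq R_k^{\mathcal{CA}}(k+4,j)$, and then apply Theorem \ref{thm:cactus} (valid for $i\geq k+4$, $j,k\geq 2$), which evaluates $R_k^{\mathcal{CA}}(k+4,j)$ to exactly $j-1+\big\lceil\frac{j-1}{k}\big\rceil$. Since we assume $k\geq 4\geq 2$ and $j\geq k+2$, both hypotheses are met and the upper bound is immediate.

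\textbf{Lower bound.} For $R_k^{\mathcal{CA}}(k+3,j)\geq j+\big\lfloor\frac{j-1}{k+1}\big\rfloor$, the task is to exhibit, for $n=j-1+\big\lfloor\frac{j-1}{k+1}\big\rfloor$, a cactus on $n$ vertices that has neither a $k$-dense $(k+3)$-set nor a $k$-sparse $j$-set. The natural candidate is the extremal graph used in the forest case (Theorem \ref{thm:forest}): the disjoint union of $s$ copies of the star $K_{1,k+1}$ together with $t$ isolated vertices, where $j-1=(k+1)s+t$ with $0\leq t\leq k$. This is a forest, hence a cactus; it has $(k+2)s+t = n$ vertices; and by Remark \ref{rem:sparseunion} / the argument in Theorem \ref{thm:forest} its largest $k$-sparse set has size exactly $(k+1)s+t=j-1$, because any $k$-sparse set must omit at least the center of each star (each star center has degree $k+1$). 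The only thing left to check is that this graph has no $k$-dense $(k+3)$-set; here I would use Lemma \ref{lem:cactuskdense}, which says cacti have no $k$-dense $i$-set for $i\geq k+4$ — but note this gives $i\geq k+4$, not $i=k+3$. So instead I would argue directly: any $(k+3)$-vertex subset $D$ of a disjoint union of stars and isolated vertices induces a graph in which some vertex has degree at most one (it is a forest, indeed a union of small stars), so that vertex misses at least $(k+3)-2 = k+1$ other vertices of $D$, hence $D$ is not $k$-dense. This handles $i=k+3$ for this particular, very sparse extremal graph.

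\textbf{Main obstacle and loose ends.} The genuinely delicate point is the lower-bound construction when $j\leq 2k+1$: in that regime $s=1$, so the extremal graph is a single $K_{1,k+1}$ plus $t$ isolated vertices on $k+1+t$ vertices with $t\leq k$, and one should double-check that $n=j-1+\lfloor\frac{j-1}{k+1}\rfloor$ really equals $(k+2)\cdot 1 + t$ when $s=1$ (i.e. $\lfloor\frac{j-1}{k+1}\rfloor=1$, which holds precisely when $k+1\le j-1$, i.e. $j\ge k+2$, as assumed). I would also verify the boundary behaviour at $j=k+2$ (so $s=1,t=0$, extremal graph $K_{1,k+1}$ on $k+2$ vertices, $\alpha_k=k+1=j-1$), and confirm that the corollary's phrasing "to handle the case $j\le 2k+1$" is consistent — for $j\ge 2k+2$ we have $s\ge 2$ and the bounds in the corollary still hold but are already subsumed once one knows more. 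The remaining small worry is purely arithmetic: showing $j+\lfloor\frac{j-1}{k+1}\rfloor \le j-1+\lceil\frac{j-1}{k}\rceil$ so the two-sided bound is non-vacuous, which follows since $\lfloor\frac{j-1}{k+1}\rfloor \le \frac{j-1}{k+1} \le \frac{j-1}{k} \le \lceil\frac{j-1}{k}\rceil$ and the extra $-1$ on the right is absorbed once $j-1\ge k+1$ forces $\lceil\frac{j-1}{k}\rceil \ge \lfloor\frac{j-1}{k+1}\rfloor + 1$. I expect the proof to be short: one line for the upper bound, a paragraph recycling the forest extremal graph plus the direct degree-counting argument for the $k$-dense condition at $i=k+3$.
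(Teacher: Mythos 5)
Your proof is correct and follows essentially the same route as the paper: the upper bound is obtained identically via Remark \ref{rem:defectiveinequalityforiandj} and Theorem \ref{thm:cactus}, and your lower bound simply unpacks what the paper cites in one line, namely that $\mathcal{FO}\subseteq\mathcal{CA}$ together with Remark \ref{rem:graphclasshereditary} and $R_k^{\mathcal{FO}}(k+3,j)=j+\big\lfloor\frac{j-1}{k+1}\big\rfloor$ from Theorem \ref{thm:forest} already gives $j+\big\lfloor\frac{j-1}{k+1}\big\rfloor\leq R_k^{\mathcal{CA}}(k+3,j)$. Your direct degree-counting argument for the absence of a $k$-dense $(k+3)$-set is fine but unnecessary: Lemma \ref{lem:forestkdense} already covers all $i\geq k+3$ for forests, so the extremal forest needs no separate treatment at $i=k+3$.
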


\begin{proof}
Since every forest is a cacti, the first inequality is a direct consequence of Remark \ref{rem:graphclasshereditary} and $R_k^{\mathcal{FO}}(k+3,j)=j+\Big\lfloor\dfrac{j-1}{k+1}\Big\rfloor$ (Theorem \ref{thm:forest}) whereas the second inequality comes from Remark \ref{rem:defectiveinequalityforiandj} and $R_k^{\mathcal{CA}}(k+4,j)=j-1+\Big\lceil\dfrac{j-1}{k}\Big\rceil$ (Theorem \ref{thm:cactus}). \qed \\
\end{proof}

\noindent By Corollary \ref{cor:cactifinalize}, we have $R_k^{\mathcal{CA}}(k+3,j)=j+1$ whenever $k+2\leq j\leq 2k+1$. As such, we settle all defective Ramsey numbers in cacti except $R_k^{\mathcal{CA}}(k+3,j)$ for $k\geq 4$ and $j\geq 2k+2$, and we leave this as an open question.\\



\section{Bipartite Graphs}
In this section, we present $1$-defective Ramsey numbers on bipartite graphs for all $i,j \geq 3$ except five specific values for which we provide a conjecture. In addition, we provide a formula for all $k\geq 2$ and $i\geq 2k+3$ values. Firstly, let us recall the classical Ramsey numbers. Let $\mathcal{BIP}$ be the class of bipartite graphs.

\begin{theorem}\label{thm:bipartitegeneralRamsey}
With the preceding notation, \cite{ramseygraphclassesHeggernes} $R_{0}^{\mathcal{BIP}}(i,j)=2j-1$ for all $i\geq 3$ and $j\geq1$.
\end{theorem}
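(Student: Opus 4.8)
The plan is to reduce the statement to a one-line pigeonhole argument together with an explicit extremal construction. First I would unwind the definition for $k=0$: a $0$-dense $i$-set is exactly a clique of size $i$ (an independent set of size $i$ in the complement), and a $0$-sparse $j$-set is exactly an independent set of size $j$. Hence $R_{0}^{\mathcal{BIP}}(i,j)$ is the smallest $n$ such that every bipartite graph on $n$ vertices contains a clique of size $i$ or an independent set of size $j$. Since every bipartite graph is triangle-free, it contains no clique of size $i$ whenever $i\geq 3$; so for $i\geq 3$ the quantity $R_{0}^{\mathcal{BIP}}(i,j)$ equals the least $n$ for which every bipartite graph on $n$ vertices has an independent set of size $j$.

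For the upper bound $R_{0}^{\mathcal{BIP}}(i,j)\leq 2j-1$, I would take any bipartite graph $G$ on $2j-1$ vertices with bipartition $(A,B)$. Since $|A|+|B|=2j-1$, one of the two parts has at least $\lceil (2j-1)/2\rceil = j$ vertices, and each side of a bipartition is an independent set; so $G$ has an independent set of size $j$.

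For the matching lower bound, I would exhibit a bipartite graph on $2j-2$ vertices with no clique of size $i\geq 3$ and no independent set of size $j$. For $j\geq 2$ the natural choice is $K_{j-1,j-1}$: it is bipartite, hence triangle-free and therefore clique-free for $i\geq 3$, and its independence number is exactly $j-1<j$; for $j=1$ the empty graph on $2j-2=0$ vertices trivially has no independent set of size $1$. Combining the two bounds gives $R_{0}^{\mathcal{BIP}}(i,j)=2j-1$. There is no substantive obstacle here — the argument is pigeonhole plus one construction; the only points that deserve a word of care are that the extremal graph $K_{j-1,j-1}$ is genuinely triangle-free (so that the ``no clique of size $i$'' requirement is met for all $i\geq 3$) and that the degenerate small values $j\in\{1,2\}$, where $K_{j-1,j-1}$ is empty, still conform to the formula.
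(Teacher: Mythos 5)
Your argument is correct and complete: the reduction of $0$-dense/$0$-sparse to clique/independent set, the pigeonhole upper bound on a bipartition of $2j-1$ vertices, and the extremal graph $K_{j-1,j-1}$ (with the degenerate cases $j\in\{1,2\}$ checked) together establish $R_{0}^{\mathcal{BIP}}(i,j)=2j-1$ for all $i\geq 3$ and $j\geq 1$. Note that the paper itself offers no proof of this statement --- it is quoted from the cited reference on Ramsey numbers in graph classes --- so there is no in-paper argument to compare against; what you have written is the standard self-contained proof one would expect there.
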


\noindent Now, we show that $R_{1}^{\mathcal{BIP}}(i,j)$ does not depend on $i$ if $i\neq 4$ in the following theorem.

\begin{theorem}\label{thm:bipartiteexcept4}
The following hold:
\begin{itemize}
\item[(i)] $R_{1}^{\mathcal{BIP}}(3,j)=j$ for all $j\geq 3$.
\item[(ii)] $R_{1}^{\mathcal{BIP}}(i,j)=2j-1$ for all $i\geq 5$ and $j\geq 3$.
\end{itemize}
\end{theorem}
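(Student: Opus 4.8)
The plan is to prove each part separately, in each case exhibiting an extremal lower-bound construction and then an upper-bound argument showing every bipartite graph on the target number of vertices has the required defective set. For part (i), since $\mathcal{BIP}$ contains all empty graphs, Lemma~\ref{lem:generalkplustwo} with $k=1$ gives $R_1^{\mathcal{BIP}}(3,j)=j$ for all $j\geq 3$ immediately; there is essentially nothing further to do. The content is in part (ii).

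For part (ii), the lower bound $R_1^{\mathcal{BIP}}(i,j)\geq 2j-1$ comes from a single family of extremal graphs that works for every $i\geq 5$. I would take a disjoint union of $j-1$ edges (i.e.\ $(j-1)K_2$), a bipartite graph on $2j-2$ vertices. Any $1$-sparse set can contain at most one vertex from each $K_2$ together with $\dots$ -- actually both endpoints of a $K_2$ form a $1$-sparse set, so I should instead use $(j-1)K_2$ and note that a $1$-sparse set meeting $t$ of the edges has size at most $2$ from those but the components are independent, so $\alpha_1((j-1)K_2)=2(j-1)\geq j$, which is too big. The correct extremal graph is the one forcing $\alpha_1<j$: take $\frac{j-1}{?}$ copies of a larger bipartite gadget. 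Following the pattern of Theorem~\ref{thm:forest} and the cacti section, the right construction is a disjoint union of stars or complete bipartite pieces; concretely, for $1$-defective bipartiteness one uses copies of $K_{2,2}$ (the $4$-cycle), which is bipartite, has no $1$-dense $5$-set (indeed a $4$-cycle has only $4$ vertices and any $5$-set in a union of $4$-cycles has a vertex missing $\geq 2$ others), and has $\alpha_1(K_{2,2})=3$ since taking all $4$ vertices gives a $4$-cycle with every vertex of degree $2>1$. So a disjoint union of $K_{2,2}$'s of total order $2j-2$ has $\alpha_1 = \tfrac{3}{4}(2j-2)$, still not matching. I would therefore use the same extremal graph as in the classical case, namely $(j-1)K_2$ is wrong; instead $2K_{1,j-1}$ style or, most likely, simply an independent set is not bipartite-tight either. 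The cleanest candidate matching $2j-1$ is: a graph whose $1$-sparse sets all have size $\leq j-1$ on $2j-2$ vertices. One such is the union of $j-1$ disjoint edges together with forcing — I expect the paper uses a perfect matching on $2j-2$ vertices is not it; rather it uses something like $\overline{(j-1)K_2}$'s bipartite analogue. I would look for a bipartite graph $G$ on $2j-2$ vertices with $\alpha_1(G)=j-1$ and no $1$-dense $i$-set: a natural choice is the disjoint union of $j-1$ triangles' bipartite replacement, i.e.\ paths $P_3$ won't work ($\alpha_1=3$). After this analysis I am fairly confident the intended construction is $(j-1)$ disjoint copies of $K_2$ plus a clever identification, or more simply that the extremal graph is a single $K_{j-1,j-1}$-type object; I would settle this by computing $\alpha_1$ of candidate bipartite graphs and picking the one giving exactly $j-1$.

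For the upper bound, let $G$ be a bipartite graph on $2j-1$ vertices with bipartition $(A,B)$, $|A|\geq |B|$, so $|A|\geq j$. If $A$ contains a vertex of degree $\geq 2$ within any chosen subset we can build a $1$-dense $i$-set only if $i\leq 4$, which is why the hypothesis $i\geq 5$ matters: I would show that since $G$ is triangle-free (bipartite), any set of $5$ vertices induces a triangle-free graph, hence has a vertex of degree $\leq 2$ missing $\geq 2$ of the others, so $G$ has no $1$-dense $5$-set, hence none of size $i\geq 5$. Thus it suffices to find a $1$-sparse $j$-set. Here I would argue directly on the larger side $A$: $A$ itself is independent, hence $1$-sparse, and $|A|\geq \lceil (2j-1)/2\rceil = j$. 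That already gives a $1$-sparse $j$-set, completing the upper bound. So the real work is only: (a) confirming bipartite graphs have no $1$-dense $i$-set for $i\geq 5$ (short triangle-free argument), and (b) pinning down the extremal graph on $2j-2$ vertices with $\alpha_1 = j-1$ and no $1$-dense $i$-set.

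The main obstacle will be (b): verifying that the chosen extremal bipartite construction on $2j-2$ vertices simultaneously avoids a $1$-sparse $j$-set \emph{and} a $1$-dense $i$-set for \emph{all} $i\geq 5$ (not just $i=5$), and checking the arithmetic so that $\alpha_1$ is exactly $j-1$ rather than slightly larger — bipartite graphs tend to have large independent sides, so squeezing $\alpha_1$ down to $j-1$ on $2j-2$ vertices requires a genuinely balanced, dense gadget, and I expect one must be careful that the gadget stays bipartite while its largest $1$-sparse set is forced below half its order.
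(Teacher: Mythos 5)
Your part (i) and your upper-bound argument for part (ii) are fine, and in fact your upper bound is slightly more self-contained than the paper's: you observe directly that the larger side $A$ of the bipartition of a graph on $2j-1$ vertices is an independent (hence $1$-sparse) set of size at least $j$, whereas the paper invokes the classical value $R_0^{\mathcal{BIP}}(i,j)=2j-1$ from Theorem~\ref{thm:bipartitegeneralRamsey} together with Remark~\ref{rem:defectiveinequalityfork}. Your triangle-free argument that bipartite graphs admit no $1$-dense $i$-set for $i\geq 5$ also matches the paper's.

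The genuine gap is the lower bound: you never actually produce and verify an extremal graph on $2j-2$ vertices, and you explicitly defer this ("I would settle this by computing $\alpha_1$ of candidate bipartite graphs"). The paper's construction is simply $K_{j-1,j-1}$, which you mention only in passing as a "$K_{j-1,j-1}$-type object" without checking it. The verification is two lines: any $j$-subset of $K_{j-1,j-1}$ meets both sides, say in $a\geq 1$ and $j-a\geq 1$ vertices, and a vertex on the side contributing $a$ vertices has exactly $j-a$ neighbours in the subset; being $1$-sparse would force $j-a\leq 1$ and $a\leq 1$ simultaneously, impossible for $j\geq 3$, so $\alpha_1(K_{j-1,j-1})=j-1$, while the graph is bipartite and hence triangle-free, so it has no $1$-dense $i$-set for $i\geq 5$. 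Note also that your intermediate computation $\alpha_1(K_{2,2})=3$ is wrong (any three vertices of a $4$-cycle induce a $P_3$ whose centre has degree $2$, so $\alpha_1(C_4)=2$); with the correct value, your dismissed candidate of $\tfrac{j-1}{2}$ disjoint $4$-cycles would in fact also work, but only when $j$ is odd, which is why $K_{j-1,j-1}$ is the cleaner uniform choice.
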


\begin{proof}
Since an empty graph is bipartite, $R_{1}^{\mathcal{BIP}}(3,j)=j$ directly follows from Lemma \ref{lem:generalkplustwo}. On the other hand, Theorem \ref{thm:bipartitegeneralRamsey} and Remark \ref{rem:defectiveinequalityfork} give $R_{1}^{\mathcal{BIP}}(i,j)\leq R_{0}^{\mathcal{BIP}}(i,j)=2j-1$ for all $i\geq 5$ and $j\geq 3$. Note that, for $i\geq 5$, any $1$-dense $i$-set has a triangle. Thus, a bipartite graph does not contain a $1$-dense $i$-set. Since $K_{j-1,j-1}$ has no $1$-sparse $j$-set for $j\geq 3$, we get $R_{1}^{\mathcal{BIP}}(i,j)\geq 2j-1$ for all $i\geq 5$ and $j\geq 3$. Hence, the result follows. \qed \\
\end{proof}

\noindent When $i=4$, we first establish the following singular values in order to derive a general formula for $R_{1}^{\mathcal{BIP}}(4,j)$.

\begin{theorem}\label{thm:bipartitesmall}
	Each of the following hold:
	\begin{itemize}
 
		\item[(i)] If $j\in\{4,5,6\}$, then $R_{1}^{\mathcal{BIP}}(4,j)=2j-3$.
		\item[(ii)] If $j\in\{3,7\}$, then $R_{1}^{\mathcal{BIP}}(4,j)=2j-2$.
		\item[(iii)] If $j\in\{8,9,13,14\}$, then $R_{1}^{\mathcal{BIP}}(4,j)=2j-1$.
	\end{itemize}
\end{theorem}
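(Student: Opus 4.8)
The plan is to verify each of the three groups of values separately, since they are ``small'' enough to be handled by combining the general upper bound $R_1^{\mathcal{BIP}}(i,j)\leq 2j-1$ (from Theorem \ref{thm:bipartitegeneralRamsey} and Remark \ref{rem:defectiveinequalityfork}) with explicit extremal bipartite constructions for the lower bounds, plus ad hoc case analysis for the sharper upper bounds. The overarching observation, used throughout, is that since $i=4$, a $1$-dense $4$-set is a set of $4$ vertices in which every vertex misses at most one other; equivalently its complement has maximum degree $\leq 1$, so the induced subgraph on those $4$ vertices is $K_4$ minus a matching. In a bipartite graph such a set cannot contain a triangle, so the only possible $1$-dense $4$-set is one inducing $C_4$ (i.e. $K_{2,2}$) or $K_4$ minus a perfect matching, which is again $C_4$. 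Hence a bipartite graph has a $1$-dense $4$-set if and only if it contains an induced $C_4$, and this reformulation drives every step.

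For part (iii), the values $j\in\{8,9,13,14\}$: the upper bound $R_1^{\mathcal{BIP}}(4,j)\leq 2j-1$ is free, so I only need matching lower bounds, i.e.\ a bipartite graph on $2j-2$ vertices with no induced $C_4$ and no $1$-sparse $j$-set. The natural candidate is a $C_4$-free bipartite graph that is ``dense'' enough to keep $\alpha_1$ small; incidence graphs of combinatorial designs (e.g.\ the incidence graph of a projective plane, or a generalized quadrangle, or a carefully chosen bipartite graph of girth $\geq 6$) are the standard source of $C_4$-free bipartite graphs with small independence-type numbers. The arithmetic $j\in\{8,9,13,14\}$ strongly suggests these come from incidence graphs of small projective planes (orders $2$ and $3$, giving $7$- and $13$-point planes with $14$- and $26$-vertex incidence graphs, whose bipartite halves have $7$ and $13$ vertices) — so I would exhibit, for each such $j$, a specific $C_4$-free bipartite graph on $2j-2$ vertices and check by a short counting/degree argument that every $j$-subset contains a vertex of induced degree $\geq 2$. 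For parts (i) and (ii) I would likewise produce the lower-bound graphs first (a $C_4$-free bipartite graph on $2j-4$ resp.\ $2j-3$ vertices with no $1$-sparse $j$-set), and then prove the improved upper bounds $R_1^{\mathcal{BIP}}(4,j)\leq 2j-3$ (resp.\ $\leq 2j-2$) by showing directly that every bipartite graph on that many vertices either contains an induced $C_4$ or has a $1$-sparse $j$-set: here I would take a bipartition $(A,B)$, and argue that if the graph is $C_4$-free then any two vertices on the same side have at most one common neighbour, which forces the edges to be sparse (a $C_4$-free bipartite graph on $n$ vertices has $O(n^{3/2})$ edges, but more usefully here, locally, one can greedily build a large low-degree induced subgraph), and then extract a $1$-sparse $j$-set by a greedy deletion argument on vertices of induced degree $\geq 2$.

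The main obstacle I anticipate is the upper-bound direction for parts (i) and (ii) — showing that \emph{every} $C_4$-free bipartite graph on $2j-3$ (resp.\ $2j-2$) vertices has a $1$-sparse $j$-set — because this is a genuine extremal statement rather than a single construction, and the bound is tight, so the greedy argument has to be run carefully and the small values $j=3$ and $j=7$ behave differently (they need $2j-2$ rather than $2j-3$), which signals that there exist sporadic $C_4$-free bipartite graphs on $2j-4$ vertices (for $j=3,7$) with no $1$-sparse $j$-set, so the proof must both rule such graphs out for $j\in\{4,5,6\}$ and produce them for $j\in\{3,7\}$. I expect the $j=7$ case in particular to hinge on the incidence graph of the Fano plane (on $14=2j$ vertices) and a near-extremal subgraph of it on $2j-3=11$ vertices; isolating exactly which induced subgraphs of design incidence graphs remain $C_4$-free with small $\alpha_1$ is the delicate part. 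Throughout, wherever the case analysis becomes unwieldy, I would lean on the fact (implicit in the paper's methodology and the cited computer searches in \cite{defectiveparameterTinazAhu, defectiveRamseyJohnChappell}) that these are finite checks: for each fixed small $j$ the statement is decidable, so a structured exhaustive argument (or a cited computation) completes the remaining cases.
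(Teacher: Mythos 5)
Your overall strategy coincides with the paper's: take the free upper bound $R_{1}^{\mathcal{BIP}}(4,j)\leq 2j-1$ from Theorem \ref{thm:bipartitegeneralRamsey} and Remark \ref{rem:defectiveinequalityfork}, supply explicit $C_4$-free bipartite graphs with small $\alpha_1$ for the lower bounds, and handle the improved upper bounds in (i) and (ii) by direct case analysis. Your reformulation that a bipartite graph has a $1$-dense $4$-set if and only if it contains a $C_4$ (equivalently, two vertices on one side with two common neighbours) is exactly the tool the paper uses throughout, and your guess about design incidence graphs is partly on target: the paper's extremal graph for $j=8$ is the Heawood graph (the incidence graph of the Fano plane), and the one for $j=14$ is a $4$-regular $C_4$-free bipartite graph on $13+13$ vertices, i.e.\ the incidence graph of the projective plane of order $3$.

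Nevertheless there are genuine gaps, and they sit precisely where the work is. First, part (iii) needs \emph{four} extremal graphs and projective planes supply only two: there is no plane with $8$ points per side, so $j=9$ requires an ad hoc $16$-vertex $3$-regular bipartite circulant of girth six (the paper's $G_2$), and $j=13$ requires a $24$-vertex graph, which the paper obtains by deleting two vertices from the $26$-vertex graph and then verifying $\alpha_1\leq 12$ via a lengthy counting argument split into subcases $x=3,4,5,6$; you produce neither graph, and even for the Heawood graph the bound $\alpha_1\leq 7$ needs the pigeonhole argument the paper spells out, not just ``a short counting/degree argument.'' Second, for the upper bounds in (i), the extremal fact that $C_4$-free bipartite graphs have $O(n^{3/2})$ edges is vacuous at $n\leq 9$, and your unspecified ``greedy deletion'' would not certify the tight value $2j-3$; the paper instead fixes $|A|=j-2$, $|B|=j-1$ and runs an explicit degree case analysis (a degree-one vertex in $A$, a vertex of degree at least three in $A$, or all degrees in $A$ equal to two). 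Third, for $j=7$ you conjecture an $11$-vertex subgraph of the Heawood graph as the extremal example but do not verify it --- deleting three vertices only guarantees $\alpha_1\geq 4$, not $\alpha_1\leq 6$ --- whereas the paper sidesteps this by importing $R_{1}^{\mathcal{PG}}(4,7)=12$ from \cite{1defectiveperfectTinazOylumJohn} together with the observation that bipartite graphs are exactly the triangle-free perfect graphs. Your closing appeal to an exhaustive or cited computation concedes that these decisive steps are not actually carried out, so the proposal is a correct outline rather than a proof.
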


\begin{proof}
	\begin{itemize}
		\item[(i)] Firstly, we observe that none of $K_{1,3}$, $C_6$ and $C_8$ contains a $1$-dense $4$-set. Secondly, $K_{1,3}$ has no $1$-sparse $4$-set, and $C_6$ has no $1$-sparse $5$-set, and $C_8$ has no $1$-sparse $6$-set. Therefore, we have $R_{1}^{\mathcal{BIP}}(4,j)\geq 2j-3$ for $j\in\{4,5,6\}$. 

Now, let $G$ be a bipartite graph with bipartition $(A,B)$ on $2j-3$ vertices where $j\in\{4,5,6\}$. We will show that $G$ contains a 1-dense 4-set or a 1-sparse $j$-set. If $|A|$ or $|B|$ is at least $j$, we are done. Thus, we can assume without loss of generality $|A|=j-2$ and $|B|=j-1$ since $|A|+|B|=2j-3$. If a vertex in $A$ has degree one then this vertex together with $B$ forms a 1-sparse $j$-set, and we are done. So, assume that each vertex in $A$ has degree at least two. We consider two complementary cases:
		\begin{itemize}
			\item Assume there exists a vertex $a\in A$ of degree at least $3$, say $u,v,w\in B$ are adjacent to $a$. Consider the union of $A-a$ and $\{u,v,w\}$. Either $(A-a) \cup \{u, v, w\} $ is a $1$-sparse $j$-set and we are done or at least one of $u,v,w$ has at least two neighbors in $A-a$. Without loss of generality, assume $b$ and $c$ are two neighbors of $u$ in $A-a$. Now, each of $b$ and $c$ has at least one neighbor in $B$ other than $u$, say $b$ is adjacent to $x$ and $c$ is adjacent to $y$. We can assume that $N(b)\cap \{y,v,w\} = N(c)\cap \{x,v,w\} = \emptyset$ and $x,y,v$ and $w$ are all distinct, or else a 1-dense 4-set is formed and we are done. Thus, we get $|B|=j-1\geq 5$, implying $j=6$. In this case, $\{b,c,v,w,x,y\}$ forms a $1$-sparse $6$-set and we are done.
			\item Assume every vertex in $A$ has degree exactly two. If the neighbors of vertices in $A$ are pairwise disjoint, then we would get $2(j-2)\leq j-1$ and so $j\leq 3$. Hence there exist two vertices $a,b\in A$ with a common neighbor $u$. Moreover, each one of $a$ and $b$ has exactly one other neighbor in $B-u$ and we can assume that these neighbors are distinct (or else they form a 1-dense 4-set with $a$ and $b$ and we are done).  Thus, $\{a,b\} \cup B -u$ is a $1$-sparse $j$-set, and we are done.  
		\end{itemize}   
		\item[(ii)] Observe that $K_{1,2}$ has no $1$-dense $4$-set and no $1$-sparse $3$-set. Moreover, a bipartite graph on $4$ vertices with no $1$-sparse $3$-set, contains a $C_4$, which is a $1$-dense $4$-set. Hence we get $R_{1}^{\mathcal{BIP}}(4,3)= 4$. On the other hand, we have $R_{1}^{\mathcal{BIP}}(4,7)=12$ from \cite{1defectiveperfectTinazOylumJohn} since bipartite graphs are exactly triangle-free perfect graphs.   
		\item[(iii)] Let $j\in\{8,9,13,14\}$. Firstly, we have $R_{1}^{\mathcal{BIP}}(4,j)\leq R_{0}^{\mathcal{BIP}}(4,j)= 2j-1$ from Remark \ref{rem:defectiveinequalityfork} and Theorem \ref{thm:bipartitegeneralRamsey}. We will complete the proof by constructing a bipartite graph on $2j-2$ vertices which has no $1$-dense $4$-set and no $1$-sparse $j$-set for each $j\in \{8,9,13,14\}$.
		\def\r{4pt}
		\def\dy{1cm}
		\tikzset{c/.style={draw,circle,fill=white,minimum size=\r,inner sep=0pt,
				anchor=center},
			d/.style={draw,circle,fill=black,minimum size=\r,inner sep=0pt, anchor=center}}
		
		\begin{figure}[h]
		\begin{center}
			\begin{tikzpicture}
			\pgfmathtruncatemacro{\Ncorners}{14}
			
			\node[draw, thick,rotate=90,minimum size=5cm,regular polygon, regular polygon sides=14] at (0,0) 
			(poly\Ncorners) {};
			
			\foreach\x in {1,...,\Ncorners}{
				\node[d] (poly\Ncorners-\x) at (poly\Ncorners.corner \x){};
				\node[anchor=\x*(360/\Ncorners)]at(poly\Ncorners.corner \x){$a_{\x}$};
			}
			\foreach\X in {1,...,\Ncorners}{
				\foreach\Y in {1,...,\Ncorners}{
					\pgfmathtruncatemacro{\Z}{abs(mod(abs(\Ncorners+\X-\Y),\Ncorners)-9)}
					\pgfmathtruncatemacro{\D}{abs(mod(abs(\X),2)-1)}
					\ifnum\Z=0
					\ifnum\D=0
					\draw (poly\Ncorners-\X) to (poly\Ncorners-\Y);
					\fi
					\fi
				}
			}
			\end{tikzpicture}
			\begin{tikzpicture}
			
			\pgfmathtruncatemacro{\Ncorners}{16}
			
			\node[draw, thick,rotate=90,minimum size=5cm,regular polygon, regular polygon sides=16] at (0,0) 
			(poly\Ncorners) {};
			
			\foreach\x in {1,...,\Ncorners}{
				\node[d] (poly\Ncorners-\x) at (poly\Ncorners.corner \x){};
				\node[anchor=\x*(360/\Ncorners)]at(poly\Ncorners.corner \x){$b_{\x}$};
			}
			\foreach\X in {1,...,\Ncorners}{
				\foreach\Y in {1,...,\Ncorners}{
					\pgfmathtruncatemacro{\Z}{abs(mod(abs(\Ncorners+\X-\Y),\Ncorners)-11)}
					\pgfmathtruncatemacro{\D}{abs(mod(abs(\X),2)-1)}
					\ifnum\Z=0
					\ifnum\D=0
					\draw (poly\Ncorners-\X) to (poly\Ncorners-\Y);
					\fi
					\fi
				}
			}
			\end{tikzpicture}
			\caption{Extremal graphs $G_1$ (on the left) and $G_2$ (on the right) for $R_{1}^{\mathcal{BIP}}(4,8)=15$  and $R_{1}^{\mathcal{BIP}}(4,9)=17$  respectively.} \label{fig:bipartite8-9}
	
\end{center}	
	\end{figure}
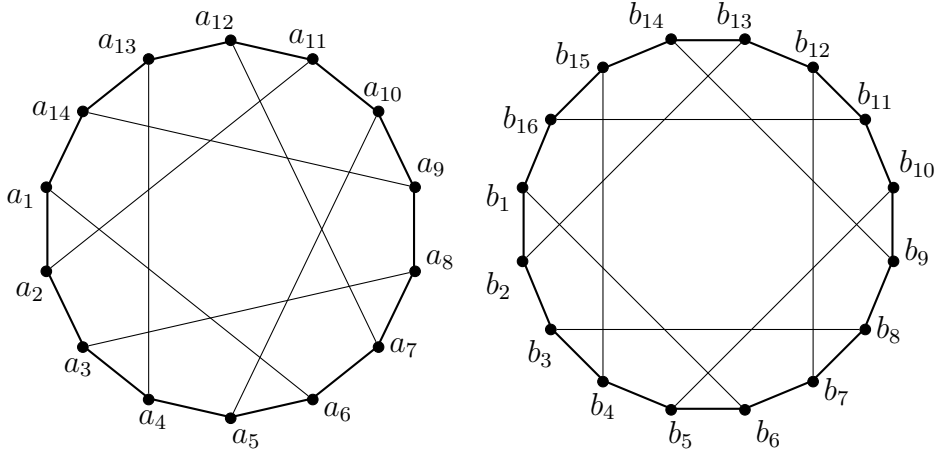

\noindent For $j=8$ (resp. $j=9$), consider the graph $G_1$ (resp. $G_2$) in Figure \ref{fig:bipartite8-9}. We note that $G_1$ is the well-known Heawood graph. Let $O$ and $E$ be the set of vertices whose indices are odd and even, respectively. Both $G_1$ and $G_2$ are 3-regular bipartite graphs on $14$ (resp. $16$) vertices with bipartition $(O,E)$ and no $1$-dense $4$-set. We claim that $G_1$ (resp. $G_2$) has no $1$-sparse $8$-set (resp. no $1$-sparse $9$-set). Assume the contrary and take a $1$-sparse $8$-set (resp. $1$-sparse $9$-set) $S=O_1\cup E_1$ where $O_1\subseteq O$ and $E_1\subseteq E$. Since both $G_1$ and $G_2$ are symmetric, assume without loss of generality that $|E_1|=x\leq 4$. Note that $|O|=7$ (resp $|O|=8$) and $|O_1|=8-x$ (resp. $|O_1|=9-x$) give $|O-O_1|=x-1$. Since $S$ is $1$-sparse and both $G_1$ and $G_2$ are 3-regular, each vertex in $E_1$ has at least two neighbors in $O-O_1$. Therefore, there are at least $2x$ edges between $E_1$ and $O-O_1$. It follows by the pigeonhole principle that there are two vertices $u,v\in O-O_1$ such that each of $u$ and $v$ has exactly three neighbors in $E_1$. Since $x\leq 4$, this implies that $u$ and $v$ have at least two common neighbors, contradicting the fact that $G_1$ (resp. $G_2$) has no $1$-dense $4$-set. As a result, we get $R_{1}^{\mathcal{BIP}}(4,8)=15$ (resp. $R_{1}^{\mathcal{BIP}}(4,9)=17$).\\

\begin{figure}[h]
	\begin{center}

		\begin{tikzpicture}
		
		\pgfmathtruncatemacro{\Ncorners}{26}
		
		\node[thick,rotate=90,minimum size=5cm,regular polygon, regular polygon sides=26] at (0,0)
		(poly\Ncorners) {};
		\foreach\x in {1,...,24}{
			
			\node[d] (poly\Ncorners-\x) at (poly\Ncorners.corner \x){};\node[anchor=\x*(360/\Ncorners)]at(poly\Ncorners.corner \x){$c_{\x}$};
			
		}
		\foreach\X in {1,...,24}{
			\foreach\Y in {1,...,24}{
				\pgfmathtruncatemacro{\Z}{abs(mod(abs(26+\X-\Y),26)-19)}
				\pgfmathtruncatemacro{\D}{abs(mod(abs(\X),2)-1)}
				
				\ifnum\Z=0
				\ifnum\D=0
				\draw (poly\Ncorners-\X) to (poly\Ncorners-\Y);
				\fi
				\fi

				\pgfmathtruncatemacro{\Z}{abs(mod(abs(26+\X-\Y),26)-15)}
				\pgfmathtruncatemacro{\D}{abs(mod(abs(\X),2)-1)}
				
				\ifnum\Z=0
				\ifnum\D=0
				\draw (poly\Ncorners-\X) to (poly\Ncorners-\Y);
				\fi
				\fi
				
				\pgfmathtruncatemacro{\Z}{\X-\Y}
				\ifnum\Z=1
				\draw (poly\Ncorners-\X) to (poly\Ncorners-\Y);
				\fi
			}
		}
		\end{tikzpicture}
		\begin{tikzpicture}
		\pgfmathtruncatemacro{\Ncorners}{26}
		\node[draw, thick,rotate=90,minimum size=5cm,regular polygon, regular polygon sides=26] at (0,0) 
		(poly\Ncorners) {};
		
		\foreach\x in {1,...,\Ncorners}{
			\node[d] (poly\Ncorners-\x) at (poly\Ncorners.corner \x){};
			\node[anchor=\x*(360/\Ncorners)]at(poly\Ncorners.corner \x){$d_{\x}$};
		}
		\foreach\X in {1,...,\Ncorners}{
			\foreach\Y in {1,...,\Ncorners}{
				\pgfmathtruncatemacro{\Z}{abs(mod(abs(\Ncorners+\X-\Y),\Ncorners)-15)}
				\pgfmathtruncatemacro{\D}{abs(mod(abs(\X),2)-1)}
				\ifnum\Z=0
				\ifnum\D=0
				\draw (poly\Ncorners-\X) to (poly\Ncorners-\Y);
				\fi
				\fi
				\pgfmathtruncatemacro{\Z}{abs(mod(abs(\Ncorners+\X-\Y),\Ncorners)-19)}
				\pgfmathtruncatemacro{\D}{abs(mod(abs(\X),2)-1)}
				\ifnum\Z=0
				\ifnum\D=0
				\draw (poly\Ncorners-\X) to (poly\Ncorners-\Y);
				\fi
				\fi
			}
		}
		\end{tikzpicture}
		
		\caption{Extremal graphs $G_3$ (on the left) and $G_4$ (on the right) for $R_{1}^{\mathcal{BIP}}(4,13)=25$  and $R_{1}^{\mathcal{BIP}}(4,14)=27$  respectively.} \label{fig:bipartite13-14}
	\end{center}	
\end{figure}
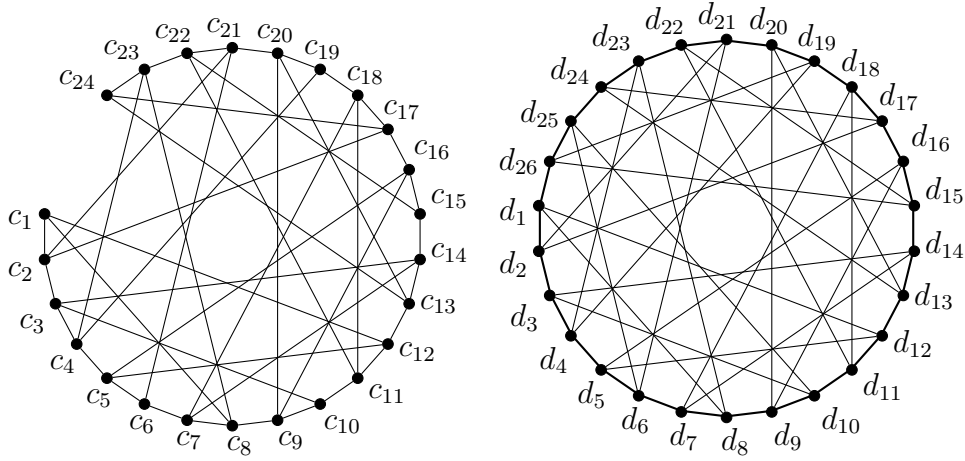
	
\noindent For $j=14$, consider the graph $G_4$ in Figure \ref{fig:bipartite13-14}. Let $O$ and $E$ be the set of vertices whose indices are odd and even, respectively. It can be easily seen that $G_4$ is a 4-regular bipartite graph on $26$ vertices with bipartition $(O,E)$ and it has no $1$-dense $4$-set. We claim that $G_4$ has no $1$-sparse $14$-set. Assume the contrary and take a $1$-sparse $14$-set $S=O_1\cup E_1$ where $O_1\subseteq O$ and $E_1\subseteq E$. Since $G_4$ is symmetric, assume without loss of generality that $|E_1|=x\leq 7$. Note that $|O|=13$ and $|O_1|=14-x$ together give $|O-O_1|=x-1$. Since $S$ is $1$-sparse and each vertex in $G_4$ has degree $4$, each vertex in $E_1$ has at least three neighbors in $O-O_1$. Therefore, there are at least $3x$ edges between $E_1$ and $O-O_1$. Thus, by the pigeonhole principle, there are three vertices $u,v,w\in O-O_1$ such that all four neighbors of each one of $u$, $v$ and $w$ are in $E_1$. Since $x\leq 7$, we get
\begin{eqnarray*}
7 &\geq& |N(u) \cup N(v) \cup N(w)|\\
&\geq& |N(u)|+|N(v)|+|N(w)|-|N(u)\cap N(v)|-|N(u)\cap N(w)|-|N(v)\cap N(w)|\\
&=& 12-|N(u)\cap N(v)|-|N(u)\cap N(w)|-|N(v)\cap N(w)|.
\end{eqnarray*}  
By pigeonhole principle, we have that at least one of $|N(u)\cap N(v)|$, $|N(u)\cap N(w)|$ and $|N(v)\cap N(w)|$ is at least two. Without loss of generality, say $| N(u)\cap N(v)| \geq 2$, which implies that $\{u,v\}\cup (N(u)\cap N(v))$ contains a 1-dense 4-set, a contradiction. As such, we have $R_{1}^{\mathcal{BIP}}(4,14)=27$.\\

\noindent For $j=13$, consider the graph $G_3$ in Figure \ref{fig:bipartite13-14} which is isomorphic to $G_4-\{d_{25},d_{26}\}$. Let $O$ and $E$ be the set of vertices whose indices are odd and even, respectively. We observe that $G_3$ is a bipartite graph on $24$ vertices with bipartition $(O,E)$ and no $1$-dense $4$-set (as it is an induced subgraph of $G_3$). We claim that $G_3$ has no $1$-sparse $13$-set. Assume the contrary and take a $1$-sparse $13$-set $S=O_1\cup E_1$ where $O_1\subseteq O$ and $E_1\subseteq E$. Let us assume $|E_1|=x\leq 6$. The case $|O_1|\leq 6$ directly follows from the following analysis as all the arguments remain valid when $|O_1|\leq 6$ by exchanging the roles of $E_1$ and $O_1$.

Since $|O|=|E|=12$, we have  $|O-O_1|=x-1$. We observe that each of $O$ and $E$ has three vertices of degree three, and the remaining vertices have degree four.  Since $S$ is $1$-sparse and all degrees in $G_3$ are at least three, each vertex in $E_1$ has at least two neighbors in $O-O_1$, implying  $|O-O_1|=x-1\geq 2$, thus $x\geq 3$. It also follows that there are at least $2x$ edges between $E_1$ and $O-O_1$. Then, for $x=3$ and $x=4$,  the number of different pairs of vertices in $O-O_1$ are respectively 1 and 3. This implies that in both cases there are at least two vertices of $E_1$ with two common neighbors in $O-O_1$, yielding a 1-dense 4-set, a contradiction. We now examine the cases $x=5$ and $x=6$ separately: 

	\begin{itemize}
						\item Suppose $x = 5$, in other words say $|O_1|=8$ and $|E-E_1|=7$. Since $S$ is a $1$-sparse set, there are at most $5$ edges between $E_1$ and $O_1$. Therefore, there are at least $(3\cdot 3)+(5\cdot 4)-5=24$ edges between $O_1$ and $E-E_1$. Thus, by pigeonhole principle, there exist three vertices $u,v,w\in E-E_1$ such that each of them has exactly four neighbors in $O_1$. Thus, by using $|O_1|=8$, we get
			\begin{eqnarray*}
				8 &\geq& |N(u) \cup N(v) \cup N(w)|\\
				&\geq& |N(u)|+|N(v)|+|N(w)|-|N(u)\cap N(v)|-|N(u)\cap N(w)|-|N(v)\cap N(w)|\\
				&=& 12-|N(u)\cap N(v)|-|N(u)\cap N(w)|-|N(v)\cap N(w)|
			\end{eqnarray*}  
			Then, by the pigeonhole principle, we have at least one of $|N(u)\cap N(v)|$, $|N(u)\cap N(w)|$ and $|N(v)\cap N(w)|$ is at least two. Without loss of generality, say $|N(u)\cap N(v)|\geq 2$, which implies $u$ and $v$ have at least two common neighbors. However, this is impossible since $G_3$ has no $1$-dense $4$-set.
			
			\item Suppose $x = 6$, in other words say $|O_1|=7$ and $|E-E_1|=6$. Since $S$ is a $1$-sparse set, there are at most $6$ edges between $E_1$ and $O_1$. Therefore, there are at least $(3\cdot 3)+(4\cdot 4)-6=19$ edges between $O_1$ and $E-E_1$. Thus, by the pigeonhole principle, there is at least one vertex $z\in E-E_1$ such that $z$ has exactly four neighbors in $O_1$. Note that we get $|O_1-N(z)|=3$. Let us take $t\in (E-E_1)-\{z\}$. If $t$ is adjacent to all vertices in $O_1-N(z)$, then each vertex in $(E-E_1)-\{z,t\}$ can be adjacent to at most one vertex in each of $N(z)$ and $O_1-N(z)$ since there is no $1$-dense $4$-set in $G_3$. Therefore, there can be at most $(4\cdot 2)+4+4=16$ edges between $O_1$ and $E-E_1$, which is a contradiction. Hence, assume that each vertex in $(E-E_1)-\{z\}$ has at most two neighbors in $O_1-N(z)$ and at most one neighbor in $N(z)$. However, there are at least $15$ edges between $(E-E_1)-\{z\}$ and $O_1$. Then, we note each vertex in $(E-E_1)-\{z\}$ has exactly two neighbors in $O_1-N(z)$ and exactly one neighbor in $N(z)$. Thus, $|O_1-N(z)|=3$ and $|(E-E_1)-\{z\}|=5$ imply there are two vertices in $(E-E_1)-\{z\}$ having two common neighbors in $O_1-N(z)$, which implies $G_3$ has a $1$-dense $4$-set and so we arrive at a contradiction. As a result, we conclude $R_{1}^{\mathcal{BIP}}(4,13)=25$. \qed
	 
		\end{itemize}
	\end{itemize}
\end{proof}

\noindent Using Theorem \ref{thm:bipartitesmall}, we exhaust all cases for $1$-defective Ramsey numbers on bipartite graphs with few exceptions.

\begin{theorem}\label{thm:bipartite1defective}
For all  $j\geq 20$ and $j \in \{15,16,17\}$, we have $R_{1}^{\mathcal{BIP}}(4,j)=2j-1$.
\end{theorem}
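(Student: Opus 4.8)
The upper bound requires nothing new: Remark~\ref{rem:defectiveinequalityfork} gives $R_{1}^{\mathcal{BIP}}(4,j)\le R_{0}^{\mathcal{BIP}}(4,j)$, and the right-hand side equals $2j-1$ by Theorem~\ref{thm:bipartitegeneralRamsey}. So the plan is to establish the reverse inequality $R_{1}^{\mathcal{BIP}}(4,j)\ge 2j-1$ by producing, for every $j$ in the stated range, a bipartite graph on $2j-2$ vertices with no $1$-dense $4$-set and no $1$-sparse $j$-set. Two facts will make this modular. First, in a bipartite graph a $1$-dense $4$-set induces a four-vertex subgraph of minimum degree at least $2$, and the only triangle-free such graph is $C_4$; hence ``has no $1$-dense $4$-set'' is equivalent to ``$C_4$-free'', and this property is clearly preserved by disjoint unions. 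Second, $\alpha_1$ is additive over connected components by Remark~\ref{rem:ksparseunion}, so ``has no $1$-sparse $j$-set'' is just ``$\alpha_1\le j-1$'', and the $\alpha_1$ of a disjoint union is the sum of the components' values.

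With these in hand, I would assemble the required graphs from blocks that Theorem~\ref{thm:bipartitesmall}(iii) has already built: bipartite $C_4$-free graphs $G_1,G_2,G_3,G_4$ on $14,16,24,26$ vertices with $\alpha_1(G_i)$ at most $7,8,12,13$, respectively. Set $N:=j-1$. If $N=7c_1+8c_2+12c_3+13c_4$ for some $c_i\in\mathbb{Z}_{\ge 0}$, then the disjoint union of $c_i$ copies of $G_i$ (for $i=1,\dots,4$) is a bipartite $C_4$-free graph on $2N=2j-2$ vertices with $\alpha_1\le 7c_1+8c_2+12c_3+13c_4=N=j-1$, which is exactly the extremal graph needed. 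Thus the theorem reduces to the assertion that every relevant $N$ belongs to the numerical semigroup $\langle 7,8,12,13\rangle$.

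Proving that assertion is the only real step, and it is an elementary Frobenius-type check; this is where the ``obstacle'' lies, but it is a mild one, since the genuine content sits in Theorem~\ref{thm:bipartitesmall}(iii). The Frobenius number of $\{7,8\}$ is $7\cdot 8-7-8=41$, so every $N\ge 42$ already lies in $\langle 7,8\rangle\subseteq\langle 7,8,12,13\rangle$, and the finitely many cases $N\in\{14,15,16\}$ and $19\le N\le 41$ are handled by exhibiting explicit representations ($14=7+7$, $15=7+8$, $16=8+8$, $19=7+12$, $20=8+12$, $21=7+7+7$, and so on). This set of values of $N$ translates precisely to $j\in\{15,16,17\}$ together with $j\ge 20$, which is the statement. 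One can also note that $N=17$ and $N=18$ (i.e.\ $j=18,19$) fail to lie in $\langle 7,8,12,13\rangle$, which is why these two values, together with the smaller orders not covered in earlier results, are left open and only conjectured to equal $2j-1$. If one wanted a proof not invoking the four explicit blocks, the alternative I would try is to realize each needed order directly as the bipartite Cayley-sum graph of a Sidon set $B\subseteq\mathbb{Z}_N$ with $|B|\ge\lceil\sqrt{N}\rceil$: $C_4$-freeness comes from the Sidon condition, and $\alpha_1\le N$ from a double-counting argument in which a hypothetical $1$-sparse $j$-set forces at least $|B|-1$ vertices whose entire neighbourhood lies inside it, whereupon $C_4$-freeness and inclusion--exclusion contradict the size bound; there the delicate point would be securing a Sidon set of size at least $\lceil\sqrt N\rceil$, which is exactly what is missing for the small exceptional orders.
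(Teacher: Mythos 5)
Your proposal is correct and follows essentially the same route as the paper: the upper bound via $R_{1}^{\mathcal{BIP}}(4,j)\le R_{0}^{\mathcal{BIP}}(4,j)=2j-1$, and the lower bound by taking disjoint unions of the extremal graphs $G_1,G_2,G_3,G_4$ from Theorem \ref{thm:bipartitesmall}(iii) and observing that $j-1$ lies in the numerical semigroup generated by $7,8,12,13$ exactly for $j-1\ge 19$ or $j-1\in\{14,15,16\}$. Your explicit Frobenius-number verification simply fills in the representability check that the paper states as an observation.
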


\begin{proof}
	Observe that if $j-1\geq 19$ or $j-1 \in \{14,15,16\}$, then there are nonnegative integers $a$, $b$, $c$, $d$ satisfying $j-1=7a+8b+12c+13d$. Hence consider the graph $H$ consisting of the disjoint union of $aG_1$, $bG_2$, $cG_3$, and $dG_4$. Clearly, $|H|=14a+16b+24c+26d=2j-2$ and $H$ has no $1$-dense $4$-set  since none of $G_1,G_2,G_3$ and $G_4$ has a 1-dense 4-set. Moreover, $\alpha_1(H)=a\cdot\alpha_1(G_1)+b\cdot\alpha_1(G_2)+c\cdot\alpha_1(G_3)+d\cdot\alpha_1(G_4)=7a+8b+12c+13d = j-1$, thus $H$ has no $1$-sparse $j$-set. Therefore, $R_{1}^{\mathcal{BIP}}(4,j)\geq2j-1$ for $j\geq 20$ or  $j \in \{15,16,17\}$ and the desired result follows by Theorem \ref{thm:bipartitegeneralRamsey}. \qed \\
\end{proof}

\noindent We claim that the same formula as in Theorem \ref{thm:bipartite1defective} is valid for the remaining cases of $1$-defective Ramsey numbers in bipartite graphs, which we leave as open question:
 
\begin{conjecture}\label{con:bipartite1defectivtherest}
$R_{1}^{\mathcal{BIP}}(4,j)=2j-1$ holds for $j\in \{10,11,12,18,19\}$.
\end{conjecture}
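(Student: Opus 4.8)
The plan is as follows. The upper bound $R_{1}^{\mathcal{BIP}}(4,j)\le 2j-1$ is immediate from Theorem \ref{thm:bipartitegeneralRamsey} and Remark \ref{rem:defectiveinequalityfork}, exactly as in Theorems \ref{thm:bipartiteexcept4} and \ref{thm:bipartite1defective}; so the whole content is the matching lower bound, and it suffices to construct, for each $j\in\{10,11,12,18,19\}$, a bipartite graph $H_j$ on $2j-2$ vertices with no $1$-dense $4$-set and no $1$-sparse $j$-set. I would first record two reformulations. In a bipartite graph a $1$-dense $4$-set is exactly an induced $C_4$ (four vertices of minimum induced degree at least two in a triangle-free graph must induce $K_{2,2}$), so ``no $1$-dense $4$-set'' means ``girth at least $6$''. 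And since one side of a bipartition is already $1$-sparse, $\alpha_1(H_j)\ge j-1$ automatically, so ``no $1$-sparse $j$-set'' forces $\alpha_1(H_j)=j-1$ and, in particular, $H_j$ balanced. Thus the goal is: for these five values of $j$, a balanced bipartite graph on $2j-2$ vertices of girth $\ge 6$ with $\alpha_1$ equal to $j-1$.

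Next I would reduce the five cases to three. Let $G_1$ be the Heawood graph and $G_2$ the $16$-vertex graph of Figure \ref{fig:bipartite8-9}, both of girth $6$, with $\alpha_1(G_1)=7$ and $\alpha_1(G_2)=8$ (established within Theorems \ref{thm:bipartitesmall} and \ref{thm:bipartite1defective}). Taking $H_{18}:=G_2\sqcup H_{10}$ and $H_{19}:=H_{10}\sqcup H_{10}$, both have girth $\ge 6$ and $34=2\cdot 18-2$ and $36=2\cdot 19-2$ vertices respectively, and, by additivity of $\alpha_1$ over connected components (Remark \ref{rem:ksparseunion}), satisfy $\alpha_1(H_{18})=8+\alpha_1(H_{10})$ and $\alpha_1(H_{19})=2\alpha_1(H_{10})$. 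Hence everything reduces to producing girth-$\ge 6$ balanced bipartite graphs $H_{10},H_{11},H_{12}$ on $18,20,22$ vertices with $\alpha_1$ equal to $9,10,11$.

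For the three base graphs I would take cubic graphs: $H_{10}$ the Pappus graph (the incidence graph of the $(9_3)$ Pappus configuration), $H_{11}$ the Desargues graph (the generalized Petersen graph $GP(10,3)$, equivalently the incidence graph of the $(10_3)$ Desargues configuration), and $H_{12}$ the incidence graph of the cyclic $(11_3)$ configuration with base block $\{0,1,3\}\subseteq\mathbb Z_{11}$ (whose nonzero differences $\pm1,\pm2,\pm3$ are all distinct, so two points share at most one line and the incidence graph has girth $\ge 6$). Each $H_m$ is $3$-regular on $2m$ vertices, so $\alpha_1(H_m)\ge m$ is trivial; the work is the bound $\alpha_1(H_m)\le m$. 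I would argue by contradiction: let $S=S_A\cup S_B$ be $1$-sparse of size $m+1$ with $b:=|S_B|\le|S_A|$, so $|A\setminus S_A|=b-1$. Since $H[S]$ has maximum degree at most $1$ and $H$ is cubic, every vertex of $S_B$ has at most one neighbour in $S_A$ and hence at least two in $A\setminus S_A$, giving at least $2b$ edges between $S_B$ and the $b-1$ vertices of $A\setminus S_A$. Feeding this into the $C_4$-free (Zarankiewicz) inequality $\sum_{v\in A\setminus S_A}\binom{|N(v)\cap S_B|}{2}\le\binom{b}{2}$, together with an edge count among the four blocks $S_A,S_B,A\setminus S_A,B\setminus S_B$ (using $3$-regularity and $e(S_A,S_B)\le b$), a short convexity-plus-pigeonhole computation forces an induced $C_4$ whenever $S$ is sufficiently unbalanced, and reduces every remaining split to a very rigid near-design between $S_B$ and $A\setminus S_A$.

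The hard part will be precisely this near-balanced regime: pure counting does not by itself yield a $C_4$ there, and the surviving near-designs must be excluded using the concrete point–line incidences of the Pappus, Desargues and cyclic $(11_3)$ configurations (alternatively, via a spectral $1$-independence bound exploiting the explicitly known, highly regular spectra of the Pappus and Desargues graphs). Since all three base graphs have at most $22$ vertices, this residual case can in any event be discharged by a finite computation, in the spirit of the computer-assisted work of \cite{defectiveparameterTinazAhu} and \cite{defectiveRamseyJohnChappell}. Once $\alpha_1(H_{10})=9$, $\alpha_1(H_{11})=10$ and $\alpha_1(H_{12})=11$ are confirmed, the desired equalities $R_{1}^{\mathcal{BIP}}(4,j)=2j-1$ for $j\in\{10,11,12,18,19\}$ follow at once.
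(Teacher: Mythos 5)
First, note that the statement you are addressing is stated in the paper only as a conjecture: the paper offers no proof, and these five values of $j$ are exactly the ones its method (disjoint unions of the extremal graphs $G_1,\dots,G_4$ from Theorems \ref{thm:bipartitesmall} and \ref{thm:bipartite1defective}, whose $\alpha_1$-values $7,8,12,13$ cannot represent $j-1\in\{9,10,11,17,18\}$) fails to reach. Your framework is the right one and is consistent with the paper's: the upper bound is immediate from Theorem \ref{thm:bipartitegeneralRamsey} and Remark \ref{rem:defectiveinequalityfork}; any extremal graph on $2j-2$ vertices must be balanced (else one side is a $1$-sparse $j$-set) and $C_4$-free (a $1$-dense $4$-set in a triangle-free graph is an induced $C_4$); and the cases $j=18,19$ reduce via Remark \ref{rem:ksparseunion} to $j=10$ together with $G_2$. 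So everything hinges on exhibiting balanced $C_4$-free bipartite graphs on $18$, $20$, $22$ vertices with $\alpha_1$ equal to $9$, $10$, $11$.

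This is where the proposal breaks, and not merely because the ``near-balanced regime'' is deferred to an unspecified finite computation: your proposed witness for $j=10$ is provably wrong. The Pappus graph satisfies $\alpha_1\geq 10$. Writing the Pappus configuration with points $A,B,C,a,b,c,X,Y,Z$ and lines $\ell_1=\{A,B,C\}$, $\ell_2=\{a,b,c\}$, $\ell_3=\{X,Y,Z\}$, $\ell_4=\{A,b,X\}$, $\ell_5=\{a,B,X\}$, $\ell_6=\{A,c,Y\}$, $\ell_7=\{a,C,Y\}$, $\ell_8=\{B,c,Z\}$, $\ell_9=\{b,C,Z\}$, the ten vertices $\{b,C,X,Y,Z\}\cup\{\ell_1,\ell_2,\ell_5,\ell_6,\ell_8\}$ of the incidence graph induce a perfect matching (the edges $b\ell_2$, $C\ell_1$, $X\ell_5$, $Y\ell_6$, $Z\ell_8$), hence a $1$-sparse $10$-set. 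This is precisely one of the ``surviving near-designs'' your Zarankiewicz count cannot exclude --- the five chosen lines meet the four unchosen points $\{A,a,B,c\}$ in five distinct pairs, so no two of them create a $C_4$ --- and it shows that the residual case cannot ``in any event be discharged,'' because for this graph the unwanted configuration actually exists. Consequently $H_{10}$ fails, and with it $H_{18}$ and $H_{19}$; the Desargues and cyclic $(11_3)$ candidates are equally unverified and vulnerable to the same partial-parallel-class construction. The conjecture therefore remains open: to close it along your lines you would need different $18$-, $20$- and $22$-vertex witnesses, or a proof that none exist (which would refute the conjectured value for those $j$).
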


\noindent For $k\geq 2$, we will give the formula for sufficiently large $i$ values by using the fact that bipartite graphs have no relatively dense subsets.

\begin{theorem}\label{thm:bipartitelargevalues}
For all $i\geq 2k+3$ and $k\geq2$, we have $$R_{k}^{\mathcal{BIP}}(i,j)=\begin{cases}
2j-1-k, & \text{if } k+2\leq j\leq 2k\\
2j-1, & \text{if } j\geq 2k+1
\end{cases}$$
\end{theorem}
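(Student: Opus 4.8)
The plan is to split the argument according to the two ranges of $j$, and in each range to prove a matching upper and lower bound. The crucial structural fact, which I will use throughout, is that a bipartite graph has no $k$-dense $i$-set when $i \geq 2k+3$: if $D$ were such a set with bipartition $(D_A, D_B)$, say $|D_A| \geq |D_B|$, then $|D_A| \geq \lceil i/2 \rceil \geq k+2$, and any vertex of $D_A$ misses all $|D_A|-1 \geq k+1$ other vertices of $D_A$, contradicting $k$-density. Hence for all $i \geq 2k+3$ the problem reduces to finding the smallest $n$ such that every bipartite graph on $n$ vertices has a $k$-sparse $j$-set.

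For the upper bounds, first consider $j \geq 2k+1$. Let $G$ be a bipartite graph on $2j-1$ vertices with bipartition $(A,B)$; without loss of generality $|A| \geq j$, and since $A$ is independent it is in particular a $k$-sparse $j$-set (take any $j$ of its vertices). For the range $k+2 \leq j \leq 2k$, I would take $G$ bipartite on $2j-1-k$ vertices with parts $A,B$, $|A| \geq |B|$, so $|A| \geq j - \lfloor k/2 \rfloor \geq \lceil (j-k+ \,\cdot)/ \rceil$; more usefully, $|A| \geq \lceil (2j-1-k)/2 \rceil = j - \lceil (k+1)/2 \rceil$, which may be less than $j$, so I cannot simply use one side. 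Instead I would argue: if $|A| \geq j$ we are done; otherwise $|B| \leq j-1$ and $|A| \leq j-1$, and we must build a $k$-sparse $j$-set crossing the bipartition. Here I would take all of $B$ together with the $j - |B|$ vertices of $A$ of smallest degree into $B$; a counting/pigeonhole argument on the number of edges (at most $|A|\cdot|B|$, but more refined using $|A|+|B| = 2j-1-k$ and $j \leq 2k$) should show these chosen vertices have at most $k$ neighbours inside the set. This counting step — showing that one can always select a crossing set of size $j$ with all internal degrees at most $k$ in the regime $j \leq 2k$ — is the part I expect to require the most care, and is the main obstacle; the bound $j \leq 2k$ is presumably exactly what makes it work, so the inequality chasing must be done tightly.

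For the lower bounds, in the range $j \geq 2k+1$ I would exhibit a bipartite graph on $2j-2$ vertices with no $k$-sparse $j$-set: the natural candidate is $K_{j-1,j-1}$, in which any set of $j$ vertices has at least $\lceil j/2 \rceil \geq k+1$ vertices on one side, each of which is adjacent to the $\geq \lfloor j/2 \rfloor \geq k+1 > k$ vertices on the other side — wait, one needs the minority side to have $\geq k+1$ vertices, which holds since $\lfloor j/2\rfloor \geq k$ only gives $k$; so I would instead take a $j$-set with $a$ vertices on one side and $j-a$ on the other with $\min\{a, j-a\}\geq 1$, and note the side with $\geq \lceil j/2\rceil$ vertices meets $\geq \lfloor j/2 \rfloor \geq k+1$ (true once $j \geq 2k+2$; the boundary $j = 2k+1$ needs a slightly different extremal graph, perhaps $K_{j-1,j-1}$ still works by checking the split $a = k, j-a = k+1$ gives the $k$-side vertices degree $k+1$). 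For $k+2 \leq j \leq 2k$, I would construct a bipartite graph on $2j-2-k$ vertices with no $k$-sparse $j$-set, likely a complete bipartite graph $K_{p,q}$ or a blow-up thereof with $p+q = 2j-2-k$ chosen so that every $j$-subset has a vertex of internal degree $>k$; together with the absence of $k$-dense $i$-sets this gives $R_k^{\mathcal{BIP}}(i,j) \geq 2j-1-k$. Assembling the matching bounds in both ranges yields the stated formula; I would also double-check consistency at $j = 2k$ versus $j = 2k+1$, where the two formula branches differ by exactly $k$, against the constructions to make sure the extremal graphs are correct at the transition.
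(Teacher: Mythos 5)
Your reduction to $k$-sparse sets is sound (your bipartition argument for the nonexistence of $k$-dense $i$-sets when $i\geq 2k+3$ is correct, and in fact slightly more direct than the paper's, which instead finds two adjacent vertices with a common neighbour and hence a triangle), and your treatment of the range $j\geq 2k+1$ is essentially the paper's: the upper bound comes from $R_k^{\mathcal{BIP}}(i,j)\leq R_0^{\mathcal{BIP}}(i,j)=2j-1$, and the lower bound from $K_{j-1,j-1}$, where any $j$-subset splits as $a$ and $j-a$ with $1\leq a\leq j-a$, so each vertex on the minority side has internal degree $j-a\geq\lceil j/2\rceil\geq k+1$; no separate argument is needed at $j=2k+1$.

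The range $k+2\leq j\leq 2k$ is where the genuine gaps lie, and you flag the main one yourself. For the upper bound, the step you call ``the main obstacle'' needs no degree counting or pigeonhole, and the specific set you propose (all of $B$ together with the lowest-degree vertices of $A$) actually fails: take $j=2k$ and $G=K_{k,2k-1}$, which has $2j-1-k$ vertices; every vertex of the $k$-side has $2k-1\geq k+1$ neighbours, so adding even one of them to the whole $(2k-1)$-side destroys $k$-sparseness. The missing idea is that since $j\leq 2k$ one may take $k$ vertices from one side and $j-k\leq k$ from the other; such a set is automatically $k$-sparse regardless of the edge structure, because each chosen vertex's neighbours within the set all lie on the opposite side, which has at most $k$ chosen vertices. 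One only checks availability: if both sides have at most $k$ vertices the whole graph is $k$-sparse and has $2j-1-k\geq j$ vertices, and otherwise some side has at least $k+1$ vertices while the other has at least $(2j-1-k)-(j-1)=j-k$. For the lower bound in this range you leave the extremal graph unspecified; the correct choice is $K_{j-k-1,j-1}$ on $2j-2-k$ vertices, in which any $j$-subset must contain at least one vertex of the small side and at least $k+1$ vertices of the large side, so that small-side vertex has internal degree at least $k+1$. With these two pieces supplied, your outline closes into the paper's proof.
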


\begin{proof}
Suppose $i\geq 2k+3$ and $k\geq2$. We claim any bipartite graph has no $k$-dense $i$-set. Assume the contrary, let $D$ be a $k$-dense set with $|D|=i$ in a bipartite graph. Consider two adjacent vertices $u$ and $v$ in $D$. Since $D$ is $k$-dense, we get $d_D(u),d_D(v)\geq i-k-1$. Then, $d_D(u)+d_D(v)\geq 2(i-k-1)=2i-2k-2\geq i+1$ implies $u$ and $v$ have a common neighbor. Thus, $D$ contains a triangle, which is a contradiction. Thus, $R_{k}^{\mathcal{BIP}}(i,j)$ is the smallest integer $n$ such that any bipartite graph on $n$ vertices has a $k$-sparse $j$-set.\\

\noindent If $j\geq 2k+1$, then $K_{j-1,j-1}$ has no $k$-sparse $j$-set and the result follows from the Remark \ref{rem:defectiveinequalityfork} and Theorem \ref{thm:bipartitegeneralRamsey}. If $k+2\leq j\leq 2k$, $K_{j-k-1,j-1}$ has no $k$-sparse $j$-set and we will complete the proof by showing that any bipartite graph on $2j-1-k$ vertices has a $k$-sparse $j$-set.\\

\noindent Let $G$ be a bipartite graph with bipartition $(A,B)$ on $2j-1-k$ vertices. If $|A|$ or $|B|$ at least $j$, we are done. Therefore, we can assume $|A|,|B|\leq j-1$, which implies $|A|,|B|\geq j-k$. Moreover, if both of $|A|$ and $|B|$ are at most $k$, then $G$ must be $k$-sparse and the desired result follows. Hence, without loss of generality, suppose $|A|\geq k+1$. Now, we can find a $k$-sparse $j$-set by taking $k$ vertices from $A$ and $j-k$ vertices from $B$ since $j-k\leq k$. \qed \\
\end{proof}

\noindent We conclude this section by noting that the computation of $R_{k}^{\mathcal{BIP}}(i,j)$ is also open for $k\geq 2$, and $2k+2\geq i\geq k+3$ and $j\geq k+2$ in addition to five specific values in $1$-defective case. 


\section{Split Graphs}

\noindent A graph $G$ is said to be \textit{split} if its vertex set can be partitioned into a clique $K$ and an independent set $I$. In this case, we denote $G=(K,I)$ where $(K,I)$ is called a \textit{split partition}. 
 Let us denote by $\mathcal{SP}$ the class of split graphs. We first recall the classical Ramsey numbers for split graphs.
 
\begin{theorem}\label{thm:splitgeneralRamsey}
\cite{ramseygraphclassesHeggernes}  For all $i,j\geq 3$, we have $R_0^{\mathcal{SP}}(i,j)=i+j-1$.
\end{theorem}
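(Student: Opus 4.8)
The plan is to establish the matching bounds $R_0^{\mathcal{SP}}(i,j)\leq i+j-1$ and $R_0^{\mathcal{SP}}(i,j)\geq i+j-1$; recall that a $0$-dense $i$-set is just a clique of size $i$ and a $0$-sparse $j$-set is just an independent set of size $j$.

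The upper bound is immediate from the definition of a split graph. Let $G$ be any split graph on $n=i+j-1$ vertices and fix a split partition $(K,I)$, so that $|K|+|I|=i+j-1$. We cannot have both $|K|\leq i-1$ and $|I|\leq j-1$, since that would force $|K|+|I|\leq i+j-2$. Hence either $|K|\geq i$, and any $i$ vertices of the clique $K$ form a $0$-dense $i$-set, or $|I|\geq j$, and $I$ contains a $0$-sparse $j$-set. So every split graph on $i+j-1$ vertices satisfies the Ramsey condition, and $R_0^{\mathcal{SP}}(i,j)\leq i+j-1$. Note that this part does not use $i,j\geq 3$.

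For the lower bound I would exhibit one split graph $G^\ast$ on $i+j-2$ vertices with no clique of size $i$ and no independent set of size $j$. Let $K=\{k_1,\dots,k_{i-1}\}$ induce a clique, let $I=\{v_1,\dots,v_{j-1}\}$ be independent, take $(K,I)$ as the split partition, and add exactly the edges $k_1v_2$ together with $k_mv_1$ for $2\leq m\leq i-1$; here $i\geq 3$ guarantees $|K|\geq 2$ and $j\geq 3$ guarantees that $v_2$ exists. To read off the two invariants, note first that since $I$ is independent every clique meets $I$ in at most one vertex, so a clique containing $v\in I$ has size at most $1+|N(v)\cap K|$; by construction $v_1$ misses $k_1$, $v_2$ misses each of $k_2,\dots,k_{i-1}$, and $v_3,\dots,v_{j-1}$ miss all of $K$, so $|N(v)\cap K|\leq i-2$ for every $v\in I$, whence the clique number of $G^\ast$ equals $i-1$, attained by $K$. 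Dually, since $K$ is a clique every independent set meets $K$ in at most one vertex, so an independent set containing $k\in K$ has size at most $1+\left(|I|-|N(k)\cap I|\right)$; by construction $k_1$ is adjacent to $v_2$ and each $k_m$ with $m\geq 2$ is adjacent to $v_1$, so $|N(k)\cap I|\geq 1$ for every $k\in K$, whence the independence number of $G^\ast$ equals $j-1$, attained by $I$. Hence $G^\ast$ witnesses $R_0^{\mathcal{SP}}(i,j)>i+j-2$, which together with the upper bound gives the claimed equality.

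Both directions are short; the only point needing a little care is the design of $G^\ast$, where the edges must be arranged so that simultaneously no vertex of $I$ is adjacent to all of $K$ (otherwise an $i$-clique appears) while every vertex of $K$ has a neighbour in $I$ (otherwise a $j$-independent set appears). With $|K|,|I|\geq 2$ this is easy — the construction above does it, as would splitting $K$ into two nonempty parts and sending one part into $v_1$ and the other into $v_2$ — whereas for $\min\{i,j\}=2$ no such graph exists, which is consistent with the formula $i+j-1$ being particular to $i,j\geq 3$.
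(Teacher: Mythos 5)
The paper offers no proof of this statement; it is quoted verbatim from Belmonte et al.\ \cite{ramseygraphclassesHeggernes}, so there is no internal argument to compare yours against. Your proof is correct and self-contained. The upper bound is the right pigeonhole: a split partition $(K,I)$ of $i+j-1$ vertices cannot have both $|K|\leq i-1$ and $|I|\leq j-1$. Your extremal graph $G^{\ast}$ on $i+j-2$ vertices checks out: every vertex of $I$ misses at least one vertex of $K$ (indeed $|N(v)\cap K|\leq i-2$ for each $v\in I$), so the clique number is $i-1$, and every vertex of $K$ has a neighbour in $I$, so the independence number is $j-1$. Your closing observation that the construction needs $|K|,|I|\geq 2$, and that no such extremal graph exists when $\min\{i,j\}=2$, correctly accounts for the hypothesis $i,j\geq 3$. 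Incidentally, your $G^{\ast}$ is a degenerate relative of the extremal graph $H$ the paper builds in the proof of Theorem \ref{thm:splitsufficientlylarge} for general $k$ (there each clique vertex gets exactly $k+1$ neighbours in $I$, distributed cyclically); your two-edge construction is the minimal arrangement achieving the same two properties for $k=0$.
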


\noindent Since the class of split graphs is closed with respect to taking complements and an empty graph is split, the following remark is a direct consequence of Remark \ref{rem:defectivecomplement} and Lemma \ref{lem:generalkplustwo}.

\begin{remark}\label{rem:splitkplustwo}
	For all $j\geq k+2$, we have $R_k^{\mathcal{SP}}(k+2,j)=R_k^{\mathcal{SP}}(j,k+2)=j$.
\end{remark}


\noindent With the following theorem, we will show that $R_k^{\mathcal{SP}}(i,j)$ is independent from $k$ when a technical assumption on $i$, $j$, $k$ holds.

\begin{theorem}\label{thm:splitsufficientlylarge}
For all $i,j,k$ such that $(i-k-2)(j-k-2)\geq(k+1)^2$, we have $R_k^{\mathcal{SP}}(i,j)=i+j-1$.
\end{theorem}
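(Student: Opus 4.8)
The plan is to establish the two inequalities $R_k^{\mathcal{SP}}(i,j)\le i+j-1$ and $R_k^{\mathcal{SP}}(i,j)\ge i+j-1$ separately. For the lower bound I would exhibit a split graph on $i+j-2$ vertices with no $k$-dense $i$-set and no $k$-sparse $j$-set; the natural candidate is $K_{i-1}$ together with an independent set of $j-1$ vertices, with \emph{no} edges between the clique and the independent set (equivalently, the disjoint union $K_{i-1}+\overline{K_{j-1}}$, which is split). Any $k$-sparse set can contain at most $k+1$ vertices of $K_{i-1}$ and all $j-1$ independent vertices, so it has size at most $j-1+(k+1)$; but since $(i-k-2)(j-k-2)\ge (k+1)^2>0$ forces $j\ge k+3$, one checks this is still at most... wait — here I must be careful: $j-1+k+1 = j+k$ can exceed $j$. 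So the plain disjoint union does not work when $k\ge 1$. Instead I would take $K_{i-1}$ joined to $\overline{K_{j-1}}$ only partially, or better, use the complement-symmetric construction: let the independent part have size $j-1$ and the clique part size $i-1$, but add a perfect-matching-like set of edges so that each independent vertex sees enough clique vertices to prevent a large sparse set, while each clique vertex misses enough independent vertices to prevent a large dense set. Getting the edge pattern right so that \emph{both} obstructions hold simultaneously on $i+j-2$ vertices is exactly where the hypothesis $(i-k-2)(j-k-2)\ge(k+1)^2$ should enter (a counting/pigeonhole balance), and I expect this to be the main obstacle.

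For the upper bound, let $G=(K,I)$ be a split graph on $n=i+j-1$ vertices with split partition into clique $K$ and independent set $I$; I may assume the partition is chosen to maximize $|K|$. If $|K|\ge i$ then $K$ itself is a clique of size $i$, hence trivially a $k$-dense $i$-set, and we are done; symmetrically (using that split graphs are self-complementary and Remark~\ref{rem:defectivecomplement}) if $|I|\ge j$ then $I$ is a $k$-sparse $j$-set. So assume $|K|\le i-1$ and $|I|\le j-1$; combined with $|K|+|I|=i+j-1$ this forces $|K|=i-1$ and $|I|=j-1$ (up to swapping, handled by complementation). Now I would look at the bipartite edge pattern between $K$ and $I$. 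If some vertex $v\in I$ has at most $k$ neighbors in $K$, then $K\cup\{v\}$ minus... no: I want a $k$-dense $i$-set. Consider $K$ together with a vertex $v\in I$: $v$ misses the clique vertices it is non-adjacent to. If $v$ has at least $i-1-k$ neighbors in $K$, then taking $K\setminus(\text{a few non-neighbors of }v)\cup\{v\}$... this needs care. The cleaner route: if there is $v\in I$ with $d_K(v)\ge |K|-k=i-1-k$, then $\{v\}\cup K$ has $v$ missing at most $k$ vertices and each clique vertex missing at most one (namely possibly $v$), giving a $k$-dense $i$-set. Dually, if some $u\in K$ has $d_I(u)\le k$, then $\{u\}\cup I$ is $k$-sparse of size $j$. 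So we may assume every vertex of $I$ has fewer than $i-1-k$ neighbors in $K$, i.e. more than $k$ non-neighbors in $K$, and every vertex of $K$ has more than $k$ neighbors in $I$.

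Under these two assumptions I would count edges across the $(K,I)$ cut from both sides: the number of non-edges between $K$ and $I$ is more than $(k+1)|I| = (k+1)(j-1)$ (counting from $I$), while... hmm, I actually want to combine "many non-edges from $I$'s side" with "many edges from $K$'s side." The number of edges across is more than $(k+1)|K|=(k+1)(i-1)$, and the number of non-edges across is more than $(k+1)|I|=(k+1)(j-1)$; summing, $|K||I| = (i-1)(j-1) > (k+1)(i-1)+(k+1)(j-1)$, i.e. $(i-1)(j-1)-(k+1)(i-1)-(k+1)(j-1)>0$, which rearranges to $(i-k-2)(j-k-2) > (k+1)^2$. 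Since the hypothesis is $(i-k-2)(j-k-2)\ge (k+1)^2$, the strict inequality just barely fails, so I would need to track the boundary cases more carefully (replacing "more than" by "at least" in the right places, handling equality by finding either a vertex meeting the threshold exactly or swapping the split partition), but this counting is the heart of the argument and shows why the stated quadratic condition is exactly the right one. I would then assemble: the two reductions (maximal clique side; then each-side-degree assumptions) plus this double edge count give a contradiction, forcing one of the two defective sets to exist, which yields $R_k^{\mathcal{SP}}(i,j)\le i+j-1$, and together with the lower bound construction completes the proof. \qed
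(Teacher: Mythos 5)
Your proposal has a genuine problem in each direction. For the upper bound, your own first reduction already finishes the job, but you then talk yourself out of it with an arithmetic slip: if $G=(K,I)$ is split on $n=i+j-1$ vertices and $|K|\le i-1$, $|I|\le j-1$, then $|K|+|I|\le i+j-2<n$, which is impossible, so in fact $|K|\ge i$ or $|I|\ge j$ and you are done. (This is exactly the classical bound $R_0^{\mathcal{SP}}(i,j)=i+j-1$ of Theorem \ref{thm:splitgeneralRamsey}, which together with Remark \ref{rem:defectiveinequalityfork} is all the paper uses for this half.) Your claim that the constraints ``force $|K|=i-1$ and $|I|=j-1$'' is false --- that case is vacuous --- and the edge/non-edge double count you then set up cannot yield a contradiction anyway: the degree assumptions you impose give $(i-1)(j-1)\ge(k+1)(i-1)+(k+1)(j-1)$, which is algebraically \emph{equivalent} to the hypothesis $(i-k-2)(j-k-2)\ge(k+1)^2$, so the count is consistent rather than contradictory. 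You noticed this (``the strict inequality just barely fails''), but no amount of boundary tracking will fix an argument conducted inside an empty case.

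The more serious gap is the lower bound, which is the real content of the theorem and which you explicitly leave open (``I expect this to be the main obstacle''). The paper's extremal graph is concrete: take a clique $K=\{a_1,\dots,a_{i-1}\}$ and an independent set $I=\{b_1,\dots,b_{j-1}\}$, and join $a_s$ to $b_{(s-1)(k+1)+1},\dots,b_{(s-1)(k+1)+k+1}$ with indices taken modulo $j-1$. Every clique vertex then has exactly $k+1$ neighbors in $I$, so any $j$-set $J$, which must contain some $v\in K$, gives $v$ at least $(l-1)+\max\{(k+1)-(l-1),0\}\ge k+1$ neighbors inside $J$ (where $l=|J\cap K|$); hence there is no $k$-sparse $j$-set. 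On the other side, the $(i-1)(k+1)$ cross edges are spread as evenly as possible over $I$, so every $b_q$ has at most $\lceil (i-1)(k+1)/(j-1)\rceil$ neighbors in $K$; the hypothesis $(i-k-2)(j-k-2)\ge(k+1)^2$ rewrites as $j-1\ge (i-1)(k+1)/(i-k-2)$, i.e.\ $\lceil (i-1)(k+1)/(j-1)\rceil\le i-k-2$, so any $i$-set, which must contain some $u\in I$, gives $u$ at least $k+1$ misses, and there is no $k$-dense $i$-set. Your intuition that a balanced bipartite pattern between $K$ and $I$ is needed and that the quadratic hypothesis governs the balance is exactly right, but without this construction (or an equivalent one) the proof is incomplete.
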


\begin{proof}
From the inequality $(i-k-2)(j-k-2)\geq(k+1)^2$, we get $i,j\neq k+2$. It also implies $j-1\geq\dfrac{(i-1)(k+1)}{i-k-2}$. Firstly, we have $R_0^{\mathcal{SP}}(i,j)=i+j-1$ for $i,j\geq 3$ from Theorem \ref{thm:splitgeneralRamsey}. Since $R_k^{\mathcal{SP}}(i,j)\leq R_0^{\mathcal{SP}}(i,j)$ from Remark \ref{rem:defectiveinequalityfork} for any $k\geq 1$, we get $R_k^{\mathcal{SP}}(i,j)\leq i+j-1$. Now, we will construct a split graph on $i+j-2$ vertices which has no $k$-dense $i$-set and no $k$-sparse $j$-set. Let $H$ be a split graph on $i+j-2$ vertices with a partition of its vertex set into a clique $K=\{a_1,a_2,...,a_{i-1}\}$ and an independent set $I=\{b_1,b_2,...,b_{j-1}\}$ such that  $a_sb_{(s-1)(k+1)+t} \in E$ for all $s\in \{1,2,...,i-1\}$ and $t\in\{1,2,...,k+1\}$ where indices of the vertices are modulo $j-1$ with the convention $b_0=b_{j-1}$.
	
	\noindent Observe that we have $d(a_s)=k+1$ for all $s\in \{1,2,...,i-1\}$ and $|d(b_{q_1})-d(b_{q_2})|\leq 1$ for all $q_1,q_2\in\{1,2,...,j-1\}$. Since there are exactly $(i-1)(k+1)$ edges between $K$ and $I$, we observe $d(b_q)\in\Big\{\big\lfloor\dfrac{(i-1)(k+1)}{j-1}\big\rfloor,\big\lceil\dfrac{(i-1)(k+1)}{j-1}\big\rceil\Big\}$ for all $q\in\{1,2,...,j-1\}$.\\
	
	\noindent Take a subset of vertices  $J$ in $H$ with $|J|=j$. Since $|I|=j-1$, we note $|J\cap I|\leq j-1$, hence we can say $|J\cap I|=j-l$ and $|J\cap K|=l$ for some $l\geq 1$. Take a vertex  $v\in J\cap K$. Since $K$ is a clique, $v$ is adjacent to $l-1$ vertices in $J\cap K$. On the other hand, $v$ has exactly $k+1$ neighbors in $I$, so it has at least $\max\big\{(k+1)-(l-1),0\big\}$ neighbors in $J\cap I$. Thus, $v$ has at least $(l-1)+\max\big\{(k+1)-(l-1),0\big\}\geq k+1$ neighbors in $J$, implying that $J$ is not $k$-sparse. Thus, $H$ has no $k$-sparse $j$-set.\\
	
	\noindent Take a subset of vertices $C$ in $H$ with $|C|=i$. Since $|K|=i-1$, we note $|C\cap K|\leq i-1$. Hence say $|C\cap K|=i-r$ and $|C\cap I|=r$ for some $r\geq 1$. Take a vertex $u\in C\cap I$. Note that $u$ is adjacent to at most $\big\lceil\dfrac{(i-1)(k+1)}{j-1}\big\rceil$ vertices in $K$. Since $j-1\geq \dfrac{(i-1)(k+1)}{i-k-2}$, we can write $i-k-2\geq \dfrac{(i-1)(k+1)}{j-1}$ and this implies  
	$i-k-2\geq \big\lceil\dfrac{(i-1)(k+1)}{j-1}\big\rceil$ since $i-k-2$ is an integer. As a result, any vertex $u\in C\cap I$ is adjacent to at most $i-k-2$ vertices in $K$. Therefore, we conclude $d_{C\cap K}(u)\leq i-k-2$. Moreover, $u$ is not adjacent to any vertex in $C\cap I$ since $I$ is an independent set. Thus, $u$ misses at least $k+1$ vertices in $C$, implying that $C$ is not $k$-dense. Thus, $H$ has no $k$-dense $i$-set.\\
	
	\noindent It follows that there exists a split graph on $i+j-2$ vertices which has no $k$-dense $i$-set and no $k$-sparse $j$-set, namely $H$. This gives $R_k^{\mathcal{SP}}(i,j)\geq i+j-1$ and the desired result follows. \qed \\
	
\end{proof}

\noindent As a direct corollary of Theorem \ref{thm:splitsufficientlylarge}, we know the exact values of all defective Ramsey numbers for all $i$ and $j$ which are sufficiently large with respect to $k$.

\begin{corollary}\label{cor:splitsufficientlylarge}
	For all $i,j\geq 2k+3$, we have $R_k^{\mathcal{SP}}(i,j)=i+j-1$.
\end{corollary}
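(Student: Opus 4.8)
The plan is to deduce the statement directly from Theorem \ref{thm:splitsufficientlylarge} by verifying that its hypothesis is satisfied under the assumption $i,j\geq 2k+3$. First I would rewrite the assumption: $i\geq 2k+3$ is equivalent to $i-k-2\geq k+1$, and likewise $j\geq 2k+3$ gives $j-k-2\geq k+1$. Since $k+1\geq 1>0$, both factors $i-k-2$ and $j-k-2$ are positive, so the two inequalities may be multiplied to obtain $(i-k-2)(j-k-2)\geq(k+1)^2$.

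This is precisely the hypothesis of Theorem \ref{thm:splitsufficientlylarge}, so that theorem immediately yields $R_k^{\mathcal{SP}}(i,j)=i+j-1$, which is the desired conclusion. The only point requiring a moment's attention is the legitimacy of multiplying the two inequalities, which is justified because both sides are nonnegative (indeed each factor is at least $k+1$); beyond this there is no genuine obstacle, as the result is a routine specialization of the preceding theorem.
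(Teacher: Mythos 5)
Your proposal is correct and follows exactly the same route as the paper: rewrite $i,j\geq 2k+3$ as $i-k-2\geq k+1$ and $j-k-2\geq k+1$, multiply the two (valid since both factors are positive) to get $(i-k-2)(j-k-2)\geq (k+1)^2$, and invoke Theorem \ref{thm:splitsufficientlylarge}. Nothing further is needed.
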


\begin{proof}
Since $i,j\geq 2k+3$, we have $i-k-2\geq k+1$ and $i-k-2\geq k+1$, thus we get $(i-k-2)(j-k-2)\geq (k+1)^2$. Then, the result follows from Theorem \ref{thm:splitsufficientlylarge}. \qed \\
\end{proof}

\noindent We now derive the formula for diagonal defective Ramsey numbers in split graphs which are not implied by Corollary \ref{cor:splitsufficientlylarge}.

\begin{theorem}\label{thm:splitdiagonal}
	For all $i$ and $k$ such that $k+3\leq i\leq 2k+2$, we have $R_k^{\mathcal{SP}}(i,i)=3i-2k-4$.
\end{theorem}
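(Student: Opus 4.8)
The plan is to prove the two matching inequalities $R_k^{\mathcal{SP}}(i,i)\le 3i-2k-4$ and $R_k^{\mathcal{SP}}(i,i)\ge 3i-2k-4$ separately. I write every split graph under consideration as $G=(K,I)$ with $K$ a clique and $I$ an independent set, and abbreviate $d:=i-k-2$, so that the hypothesis $k+3\le i\le 2k+2$ reads $1\le d\le k$ and the target value is $3i-2k-4=3d+k+2$. Two observations drive both directions: a vertex $u\in I$ lying in a $k$-dense $i$-set with clique part $D_K$ must have at least $i-1-k=d+1$ neighbours inside $D_K$ (since each vertex of such an $i$-set misses at most $k$ others), while a clique vertex $v$ lying in a $k$-sparse set $S$ has induced degree at least $|S\cap K|-1$, so $|S\cap K|\le k+1$ and $v$ may keep at most $k+1-|S\cap K|$ neighbours inside $S\cap I$.

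For the lower bound I would exhibit an explicit extremal graph on $3i-2k-5$ vertices. Take $|K|=i-1$, split $K$ into a set $H$ of $2d$ \emph{heavy} vertices and a set $L$ of $i-1-2d=k-d+1$ \emph{light} vertices, take $|I|=2d$, put no edges between $L$ and $I$, and join $H$ to $I$ by any $d$-regular bipartite graph (for instance the cyclic one in which the $t$-th vertex of $I$ is adjacent to the $d$ consecutive heavy vertices $c_t,\dots,c_{t+d-1}$ read modulo $2d$). The vertex count is $(i-1)+2d=3i-2k-5$. No $k$-dense $i$-set exists because every vertex of $I$ has only $d<d+1$ neighbours in $K$, so no independent vertex can join a dense set and any dense set lies inside $K$, which has only $i-1$ vertices. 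No $k$-sparse $i$-set exists either: such a set $S$ would need $|S\cap K|\ge i-|I|=k-d+2$, hence $|S\cap K|\le k+1$ forces $|I\setminus S|\le d-1$, and then every heavy vertex of $S\cap K$ would retain at least $d-|I\setminus S|\ge k+2-|S\cap K|$ neighbours inside $S\cap I$, one more than the allowed $k+1-|S\cap K|$; since the $k-d+1$ light vertices alone cannot fill $S\cap K$, this is a contradiction. I would also check the small boundary regimes and confirm consistency with $R_k^{\mathcal{SP}}(i,i)=2i-1$ at $i=2k+3$ (Theorem~\ref{thm:splitsufficientlylarge}).

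For the upper bound let $G=(K,I)$ have $3i-2k-4$ vertices. If $|K|\ge i$ the clique is a $k$-dense $i$-set and if $|I|\ge i$ the independent set is a $k$-sparse $i$-set, so assume $|K|,|I|\le i-1$; then $|K|,|I|\ge 2i-2k-3=2d+1$. Put $a:=i-1-|K|$ and $b:=i-1-|I|$, so $a+b=k-d$. Two extension steps finish most cases: if at least $a+1$ vertices of $I$ have $\ge d+1$ neighbours in $K$, then $K$ together with $a+1$ of them is a $k$-dense $i$-set; dually, if at least $b+1$ clique vertices have $\le k-b$ neighbours in $I$, then all of $I$ together with $b+1$ of them is a $k$-sparse $i$-set. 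So assume both fail. Then at least $2d+1$ clique vertices have $I$-degree $\ge k-b+1$ and at least $2d+1$ independent vertices have $K$-degree $\le d$; after discarding its $\le a$ neighbours among the high-degree independent vertices, each such clique vertex still has at least $(k-b+1)-a=d+1$ neighbours among these low-degree independent vertices, which themselves have total degree $\le d$. Using self-complementarity (Remark~\ref{rem:defectivecomplement}) I may assume $|K|\ge|I|$, i.e.\ $a\le b$, and I would then extract a $k$-sparse $i$-set from this bipartite configuration by taking the $\le b$ cheap clique vertices together with one further clique vertex $v$ of minimum $I$-degree and deleting just enough neighbours of $v$ from $I$, the extra vertex available on $3i-2k-4$ (rather than $3i-2k-5$) vertices being exactly what defeats the $d$-regular obstruction from the lower bound.

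The hard part will be this last step of the upper bound. A plain double count of the edges between $K$ and $I$ is not decisive: when $k$ is large relative to $d$ the two threshold conditions are jointly realizable at the level of degree sequences, and the refined count on the high-degree-clique versus low-degree-independent bipartite subgraph only closes when $k-b\le d+2$. For the remaining balance one must genuinely exploit the freedom to trade independent vertices against clique vertices—selecting a clique vertex and suppressing its induced degree by dropping a carefully chosen neighbour—and argue that on $3i-2k-4$ vertices no analogue of the $d$-regular extremal bipartite graph can survive. Making this trade-off quantitatively tight across the whole range $1\le d\le k$, and simultaneously checking that the extremal construction leaves no mixed $k$-sparse or $k$-dense $i$-set for every admissible split of $a+b=k-d$, is where the real work lies.
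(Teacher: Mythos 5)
Your lower bound is correct, and it is in essence the paper's own construction: the paper's extremal graph (clique $A\cup B\cup C$, independent set $D\cup E$ with $|A|=|B|=|D|=|E|=i-k-2$, $|C|=2k+3-i$, $A$ complete to $D$ and $B$ complete to $E$) is exactly your heavy/light scheme with the $d$-regular bipartite graph taken to be two disjoint copies of $K_{d,d}$, and your verification that no $i$-set can be $k$-dense or $k$-sparse goes through. Your upper bound, however, has a genuine gap, which you concede yourself. The two ``extension steps'' are sound and coincide with the paper's first reduction: the paper's set $S$ of clique vertices missing at most $i-k-2=d$ vertices of $I$ is precisely your set of clique vertices of $I$-degree at least $k-b+1$, and the paper's construction of a $k$-sparse $i$-set when $|S|\le 2d$ (take $i-|I|=b+1$ vertices of $K\setminus S$ together with all of $I$) is your second extension step. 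But in the remaining case your plan of extracting a $k$-sparse $i$-set cannot work as described: all of $I$, the at most $b$ cheap clique vertices, and one further clique vertex $v$ together give at most $(i-1-b)+b+1=i$ vertices, so reaching size $i$ forces you to keep every vertex and delete nothing, while $v$ itself (not being cheap) has $I$-degree at least $k-b+1$ and hence induced degree at least $b+(k-b+1)=k+1$. Your edge double-count between the high-degree clique vertices and the low-degree independent vertices closes, as you note, only when $k-b$ is small, and no argument is supplied for the general split of $a+b=k-d$.

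The missing idea is to aim for a $k$-dense set in this case, not a $k$-sparse one, and to double-count non-edges rather than edges. The paper takes $A$ to be exactly $2d+1$ clique vertices each missing at most $d$ vertices of $I$, and lets $B\subseteq I$ be the vertices missing more than $d$ vertices of $A$. Counting non-adjacent pairs between $A$ and $I$ gives $|B|(d+1)\le |A|\,d$, hence $|B|\le 2d$ --- a bound uniform in $a$ and $b$, which is exactly what your edge count lacks. Since $|K|+|I|=3i-2k-4$, this yields $|K\cup(I\setminus B)|\ge i$, and the set $A\cup(I\setminus B)$, trimmed by discarding surplus vertices of $I\setminus B$ or padded by adjoining vertices of $K\setminus A$, is a $k$-dense $i$-set: each vertex of $A$ misses at most $d\le k$ vertices in all of $I$; each vertex of $I\setminus B$ is adjacent to at least $d+1=i-k-1$ vertices of $A$, hence misses at most $k$ vertices in any $i$-set containing $A$; and each vertex of $K\setminus A$ is adjacent to all $2d+1\ge i-k-1$ vertices of $A$. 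This is precisely the step your proposal defers as ``the real work,'' and it is where the proof is actually decided.
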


\begin{proof}
	Assume $k+3\leq i\leq 2k+2$. Firstly, we will construct a split graph on $3i-2k-5$ vertices which has no $k$-dense $i$-set and no $k$-sparse $i$-set. Consider the graph $G_1$ with split partition  $(A\cup B\cup C,D\cup E)$ with disjoint sets $A,B,C,D,E$ such that $|A|=|B|=i-k-2$, $|C|= 2k+3-i$ and $|D|=|E|=i-k-2$ where $A$ is complete to $D$ and $B$ is complete to $E$.
	
	\noindent 
 We claim $G_1$ has no $k$-dense $i$-set and $k$-sparse $i$-set. Take a subset $S$ of vertices in $G_1$ with $|S|=i$. Observe that $|A\cup B\cup C|=i-1=|C\cup D\cup E|$. Hence there exist $x,y\in S$ such that $x\in A\cup B$ and $y\in C\cup D$. Since $x\in A\cup B$, $x$ misses exactly $i-k-2$ vertices in $G_1$. Hence it misses at most $i-k-2$ vertices in $S$. Therefore, $d_S(x)\geq (i-1)-(i-k-2)=k+1$. Similarly, $y\in D\cup E$ implies $y$ is adjacent to exactly $i-k-2$ vertices in $G_1$, hence it has at most $i-k-2$ neighbors in $S$. Therefore, $y$ misses at least $(i-1)-(i-k-2)=k+1$ vertices in $S$. As a result, $S$ has a vertex that is adjacent to at least $k+1$ other vertices in $S$, namely $x$, and $S$ has a vertex that misses at least $k+1$ other vertices in $S$, namely $y$. Therefore, $S$ is neither $k$-dense nor $k$-sparse. Thus, $G_1$ has no $k$-dense $i$-set, nor $k$-sparse $i$-set, which implies $R_k^{\mathcal{SP}}(i,i)\geq3i-2k-4$.\\
	
	\noindent Let $G$ be a split graph with split partition $(K,I)$ on $3i-2k-4$ vertices. We will show that $G$ has either a $k$-dense $i$-set or a $k$-sparse $i$-set. Assume it has no $k$-sparse $i$-set, we will prove that it contains a $k$-dense $i$-set. Firstly, since $G$ has no $k$-sparse $i$-set, we have $|I|\leq i-1$ and then we get $1\leq i-|I|=|K|-2(i-k-2)$ because $|K|+|I|=3i-2k-4$. Let $S\subseteq K$ be the set of vertices which miss at most $i-k-2$ vertices in $I$. Assume that $|S|\leq 2(i-k-2)$, then we have $|K|-|S|\geq i-|I|$. Hence, consider a subgraph $D$ of $G$ containing exactly $i-|I|$ vertices from $K\backslash S$ and all vertices of $I$. Now, any vertex $u\in K\cap D$ misses at least $i-k-1$ vertices in $I$, thus we have $d_D(u)\leq k$. Moreover, for any $v\in I$, if $v$ is not an isolated vertex in $D$, take a neighbor $w\in K\cap D$ of $v$. Since $K$ is a clique and $I$ is an independent set, we get $d_D(v)\leq d_D(w)\leq k$. As a result, for any vertex $x\in D$, we have $d_D(x)\leq k$, which implies $D$ is a $k$-sparse $i$-set, a contradiction. Hence we have $|S|\geq 2(i-k-2)+1$, and let us take a subset $A\subseteq S$ with $|A|=2(i-k-2)+1$. Let $B\subseteq I$ be the set of vertices which miss more than $i-k-2$ vertices in $A$. Since each vertex in $A$ misses at most $i-k-2$ vertices in $I$, we get $|B|\cdot (i-k-2)< |A|\cdot (i-k-2)$ and so $|B|\leq |A|-1=2(i-k-2)$. This implies $|K\cup(I\backslash B)|\geq (3i-2k-4)-(2i-2k-4)=i$. Observe that: 
	\begin{itemize}
		\item[(i)] Any vertex $y_1$ in $A$ misses at most $i-k-2$ vertices in $I$. Since $i\leq 2k+2$, $y_1$ misses at most $k$ vertices in $I$.
		\item[(ii)] Any vertex $y_2$ in $I\backslash B$ misses at most $i-k-2$ vertices in $A$. Since $i\leq 2k+2$, $y_2$ misses at most $k$ vertices in $A$. On the other hand, since $|A|= 2(i-k-2)+1$, $y_2$ is adjacent to at least $i-k-1$ vertices in $A$.
		\item[(iii)] Any vertex $y_3$ in $K\backslash A$ is adjacent to all vertices in $A$ since $K$ is a clique. Hence, $y_3$ has at least $i-k-1$ neighbors in $A$ since $|A|=2(i-k-2)+1$ and $i\geq k+3$. 
	\end{itemize}
	
	\noindent Now, if $|A\cup(I\backslash B)|\geq i$, since $|A|< i$ we get $|I\backslash B|\geq 2k+3-i$. Hence we can choose $|A\cup(I\backslash B)|-i$ elements from $I\backslash B$, denote the set of these elements by $I_1$, and let us examine the set $A\cup\big(I\backslash (B\cup I_1)\big)$ of size $i$. From observation (i), any vertex $z_1$ in $A$ misses at most $k$ elements from $I\backslash (B\cup I_1)$, and $z_1$ is adjacent to all other vertices in $A$. On the other hand, any vertex $z_2$ in $I\backslash (B\cup I_1)$ is adjacent to at least $i-k-1$ vertices in $A$ from observation (ii), hence $z_2$ misses at most $(i-1)-(i-k-1)=k$ vertices in $A\cup\big(I\backslash (B\cup I_1)\big)$. As a result, $A\cup\big(I\backslash (B\cup I_1)\big)$ forms a $k$-dense $i$-set. 

If, however, $|A\cup(I\backslash B)|\leq i-1$, then we can choose $i-|A\cup(I\backslash B)|$ elements from $K\backslash A$ since $|K\cup(I\backslash B)|\geq i$; denote the set of these elements by $A_1$, and let us show that the set $(A\cup A_1)\cup(I\backslash B)$ of size $i$ is $k$-dense. Take $z_3\in A$ and $z_4\in A_1$, then $z_3$ misses at most $k$ elements in $I\backslash B$ by observation (i), and $z_4$ is adjacent to at least $i-k-1$ vertices in $(A\cup A_1)\cup(I\backslash B)$ by observation (iii). Hence each one of $z_3$ and $z_4$ can miss at most $k$ vertices in $(A\cup A_1)\cup(I\backslash B)$. On the other hand, for any $z_5\in I\backslash B$, $z_5$ is adjacent to at least $i-k-1$ vertices in $A$ from observation (ii), hence it can miss at most $(i-1)-(i-k-1)=k$ vertices in $(A\cup A_1)\cup(I\backslash B)$. As a result, $(A\cup A_1)\cup(I\backslash B)$ forms a $k$-dense $i$-set. 

We conclude that if $G$ has no $k$-sparse $i$-set, then it has a $k$-dense $i$-set, and the result follows. \qed \\
\end{proof}


\noindent We will end this section by completing the list of $1$-defective and $2$-defective Ramsey numbers in split graphs. By Remark \ref{rem:splitkplustwo}, Corollary \ref{cor:splitsufficientlylarge}, and Theorem \ref{thm:splitdiagonal}, the only open cases are $R_{1}^{\mathcal{SP}}(4,5)$ and $R_{1}^{\mathcal{SP}}(4,6)$ for $1$-defective Ramsey numbers, and $R_{2}^{\mathcal{SP}}(6,7)$, $R_{2}^{\mathcal{SP}}(6,8)$ and $R_{2}^{\mathcal{SP}}(5,j)$ for $6\leq j\leq 12$, for $2$-defective Ramsey numbers.

\begin{theorem}\label{thm:split1and2defective}
With the preceding notation,
\begin{itemize}
\item[(i)] $R_{1}^{\mathcal{SP}}(4,5)=7$ and $R_{1}^{\mathcal{SP}}(4,6)=8$.
\item[(ii)] $R_{2}^{\mathcal{SP}}(6,7)=11$ and $R_{2}^{\mathcal{SP}}(6,8)=12$.
\item[(iii)] $R_{2}^{\mathcal{SP}}(5,6)=8$ and $R_{2}^{\mathcal{SP}}(5,7)=9$.
\item[(iv)] $R_{2}^{\mathcal{SP}}(5,j)=j+3$ for all $8\leq j\leq 12$.
\end{itemize}
\end{theorem}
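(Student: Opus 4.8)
The plan is to establish each equality $R_k^{\mathcal{SP}}(i,j)=N$ by the usual two-sided argument: exhibit a split graph on $N-1$ vertices with neither a $k$-dense $i$-set nor a $k$-sparse $j$-set (lower bound), and show every split graph on $N$ vertices contains one of the two (upper bound). Throughout I fix a split partition $(K,I)$ and record one elementary dichotomy: since a clique is $0$-dense hence $k$-dense, if $|K|\geq i$ then $K$ already contains a $k$-dense $i$-set, and dually if $|I|\geq j$ then $I$ is a $k$-sparse $j$-set. So the interesting regime always forces $|K|\leq i-1$ and $|I|\leq j-1$, and because $|K|+|I|=N$ lies within $1$ of $i+j-1$, these inequalities pin $|K|$ and $|I|$ down to one or two values. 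For the constructions I will use the complementary principle that, when $|K|=i-1$, every $i$-set must contain a vertex $v\in I$ whose degree inside the set is at most $d_K(v)$; hence if each $v\in I$ satisfies $d_K(v)\leq i-2-k$ then no $i$-set can be $k$-dense. This value $i-2-k$ equals $1$ in parts (i), (iii), (iv) and $2$ in part (ii), which is what makes a uniform treatment possible.

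For the lower bounds I would take $|K|=i-1$ and place the remaining $N-i$ vertices in $I$, giving each vertex of $I$ exactly $i-2-k$ neighbours in $K$, so that the dense-prevention condition holds automatically and no $k$-dense $i$-set exists (the bound $|K|=i-1<i$ rules out all-clique sets). The only genuine design choice is the bipartite adjacency between $K$ and $I$, which must be spread out --- realised as a balanced degree sequence on $K$, or equivalently a circulant/``cycle'' pattern in which the $I$-vertices are identified with edges of a suitable graph on $K$ --- so that no $j$-subset is $k$-sparse. I would verify this by splitting a candidate $j$-set according to the number $m$ of its clique vertices: for large $m$ the $m-1$ internal clique edges already push some clique degree above $k$, while for the smallest admissible $m$ (where the set must contain almost all of $I$) one uses that every retained clique vertex still keeps at least $k+1$ neighbours, because its $I$-neighbourhood is large and only few $I$-vertices were dropped. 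The requirement that this survive down to the minimal $m$ is exactly what forces $\sum_{v\in K}d_I(v)$ to be large enough, and it is what separates the value $j+3$ for $8\leq j\leq 12$ in (iv) from the value $j+2$ for $j\in\{6,7\}$ in (iii).

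For the upper bounds I would assume $G$ has no $k$-sparse $j$-set and produce a $k$-dense $i$-set. In parts (i), (ii), (iv), where the target equals $i+j-2$, the dichotomy forces $|K|=i-1$ and $|I|=j-1$ exactly. Then I split into two cases. If some $v\in I$ has at least $i-1-k$ neighbours in $K$, then $K\cup\{v\}$ is a $k$-dense $i$-set, since each clique vertex misses at most one vertex of the set and $v$ misses at most $k$. Otherwise every $v\in I$ has at most $i-2-k$ clique neighbours, so the number of edges between $K$ and $I$ is at most $|I|(i-2-k)$; averaging over the $i-1$ clique vertices yields a clique vertex $w$ with at most $\lfloor |I|(i-2-k)/(i-1)\rfloor$ neighbours in $I$, and whenever this bound is at most $k$ the set $I\cup\{w\}$, of size $j$, is $k$-sparse, a contradiction. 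Verifying that $\lfloor |I|(i-2-k)/(i-1)\rfloor\leq k$ holds precisely in the claimed ranges is what produces the exact thresholds; for (iv) it reads $\lfloor (j-1)/4\rfloor\leq 2$, i.e.\ $j\leq 12$. Part (iii), of value $i+j-3$, is the genuine exception: here $|K|\in\{i-2,i-1\}$, and I would treat both partitions, the case $|K|=i-2$ requiring the sparse set to be assembled from $I$ together with two clique vertices and the density test to use two independent vertices of $I$. I would also invoke self-complementarity (Remark \ref{rem:defectivecomplement}) wherever $i$ and $j$ may be interchanged.

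The main obstacle is the lower-bound side, specifically proving that the chosen adjacency between $K$ and $I$ admits no $k$-sparse $j$-set: this is the one place where a clever, non-automatic choice is needed, and the verification must run over every value of $m$, the tightest constraint occurring at the minimal $m$, where the construction is barely extremal. The upper-bound counting is comparatively routine once $|K|$ and $|I|$ are pinned, the only care being the floor thresholds that explain why the formula jumps at $j\in\{7,8\}$ and at $j\in\{12,13\}$. Given the small orders involved, each individual construction and each extremal value can moreover be cross-checked by a direct computer search, consistent with the methodology already cited in the introduction.
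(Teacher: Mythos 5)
Your overall strategy coincides with the paper's: lower bounds via explicit split graphs $(K,I)$ with a sparse bipartite adjacency between the clique and the independent set, upper bounds via a pigeonhole count of the edges between $K$ and $I$ after the sizes $|K|,|I|$ are pinned down. Your upper-bound arguments are correct; they are the paper's counting run in the dual direction (you first test for an independent vertex with many clique neighbours and then average over $K$, while the paper first tests for a clique vertex with few independent neighbours and then pigeonholes over $I$), and your threshold $\bigl\lfloor (j-1)/4\bigr\rfloor\leq 2$ correctly reproduces the cutoff $j\leq 12$ in (iv). However, there is one concrete step that fails on the lower-bound side.

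The gap is in the construction for part (iv) at $j=8$. Your prescription there is $|K|=i-1=4$, $|I|=j-2=6$, every vertex of $I$ having exactly $i-2-k=1$ neighbour in $K$, with the clique degrees $d_I(u)$ \emph{balanced}, i.e.\ distribution $(2,2,1,1)$. That graph is not extremal: if $u_3,u_4$ are the two clique vertices with $d_I=1$, then $\{u_3,u_4\}\cup I$ is a $2$-sparse $8$-set (each of $u_3,u_4$ has degree exactly $2$ in it, and every vertex of $I$ has degree at most $1$). The failure occurs precisely at your ``minimal $m$'' case $m=2$, and your stated verification criterion there --- that \emph{every} retained clique vertex keeps at least $k+1$ neighbours --- is unachievable: with only $6$ cross edges spread over $4$ clique vertices, some pair of clique vertices has small $I$-degree. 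What is actually needed (and sufficient) is weaker: in every pair of clique vertices at least \emph{one} must have $d_I\geq 2$, i.e.\ at most one vertex of $K$ may have $d_I\leq 1$. With $6$ edges this forces the unbalanced distribution $(2,2,2,0)$, which is exactly the paper's graph $S_7$; for $9\leq j\leq 12$ the paper adds isolated vertices to $S_7$, while for those $j$ your balanced choice happens to satisfy the pairwise condition and works again. A second, more minor slip: in part (iii) you attribute both special features to the case $|K|=i-2$; in fact when $|K|=i-2=3$ and $|I|=j-1$ the sparse set needs only \emph{one} clique vertex while the dense test uses \emph{two} independent vertices, and it is the complementary case $|K|=i-1=4$, $|I|=j-2$ where the sparse set needs two clique vertices and the dense test only one independent vertex. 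With these two repairs your plan matches the paper's proof essentially line by line.
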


\begin{proof} 

\begin{itemize}
\item[(i)]  Observe that the graph $S_1$ (resp. $S_2$) in Figure \ref{fig:split_I} has no $1$-dense $4$-set and no $1$-sparse $5$-set (resp. no $1$-sparse $6$-set). Since $|S_1|=6$  and $|S_2|=7$, this gives $R_{1}^{\mathcal{SP}}(4,5)\geq 7$ and $R_{1}^{\mathcal{SP}}(4,6)\geq 8$. Let $G=(K,I)$ be a split graph on $7$ (resp. $8$) vertices. We claim $G$ has either a $1$-dense $4$-set or a $1$-sparse $5$-set (resp. $1$-sparse $6$-set). If $|K|\geq 4$ or $|I|\geq 5$ (resp. $|I|\geq 6$), we are done. So, since $|G|=7$ (resp. $|G|=8$), assume $|K|=3$ and $|I|= 4$ (resp. $|I|= 5$). If a vertex $u\in K$ has at most one neighbor in $I$, then $\{u\}\cup I$ is a $1$-sparse $5$-set (resp. $1$-sparse $6$-set) and we are done. So, assume each one of the three vertices in $K$ has at least two neighbors in $I$, which implies there are at least 6 edges between $K$ and $I$. Since $|I|<6$, by pigeonhole principle, there exists $v\in I$ such that $v$ has at least two neighbors in $K$. Thus, $\{v\}\cup K$ is a $1$-dense $4$-set and we are done. As a result, $R_{1}^{\mathcal{SP}}(4,5)=7$ (resp. $R_{1}^{\mathcal{SP}}(4,6)= 8$). 

\def\r{4pt}
\def\dy{1cm}
\tikzset{c/.style={draw,circle,fill=white,minimum size=\r,inner sep=0pt,
		anchor=center},
	d/.style={draw,circle,fill=black,minimum size=\r,inner sep=0pt, anchor=center}}

\begin{figure}[h]
	\begin{center}
		\begin{tikzpicture}
		\node[thick,minimum size=3cm] at (-0.3,2) {$x_1$};
		\node[d] at (0,2) {};
		\node[thick,minimum size=3cm]  at (-0.3,1) {$x_2$};
		\node[d] at (0,1) {};
		\node[thick,minimum size=3cm] at (1.3,3) {$y_1$};
		\node[d] at (1,3) {};
		\node[thick,minimum size=3cm] at (1.3,2) {$y_2$};
		\node[d] at (1,2) {};
		\node[thick,minimum size=3cm] at (1.3,1) {$y_3$};
		\node[d] at (1,1) {};
		\node[thick,minimum size=3cm] at (1.3,0) {$y_4$};
		\node[d] at (1,0) {};
		\draw (0,2) to (1,3)
		(0,2) to (1,2)
		(0,1) to (1,1)
		(0,1) to (1,0)
		(0,1) to (0,2);
		\end{tikzpicture}
		\begin{tikzpicture}
		\node[thick,minimum size=3cm] at (-0.3,2) {$x_1$};
		\node[d] at (0,2) {};
		\node[thick,minimum size=3cm]  at (-0.3,1) {$x_2$};
		\node[d] at (0,1) {};
		\node[thick,minimum size=3cm]  at (-0.8,1.5) {$x_3$};
		\node[d] at (-0.5,1.5) {};
		\node[thick,minimum size=3cm] at (1.3,3) {$y_1$};
		\node[d] at (1,3) {};
		\node[thick,minimum size=3cm] at (1.3,2) {$y_2$};
		\node[d] at (1,2) {};
		\node[thick,minimum size=3cm] at (1.3,1) {$y_3$};
		\node[d] at (1,1) {};
		\node[thick,minimum size=3cm] at (1.3,0) {$y_4$};
		\node[d] at (1,0) {};
		
		\draw (-0.5,1.5) to (0,2)
		(-0.5,1.5) to (0,1)
		(0,2) to (1,3)
		(0,2) to (1,2)
		(0,1) to (1,1)
		(0,1) to (1,0)
		(0,1) to (0,2);
		\end{tikzpicture}
\vspace{-1cm}
		\caption{Extremal graphs $S_1$ (on the left) and $S_2$ (on the right) for  $R_{1}^{\mathcal{SP}}(4,5)$ and $R_{1}^{\mathcal{SP}}(4,6)$ respectively.} \label{fig:split_I}
	\end{center}
\end{figure}
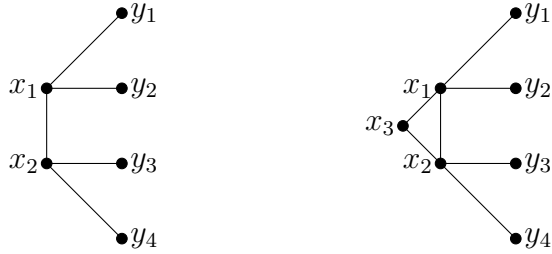

\item[(ii)] It can be seen that the graph $S_3$ (resp. $S_4$) in Figure \ref{fig:split_II} has no $2$-dense $6$-set and no $2$-sparse $7$-set (resp. no $2$-sparse $8$-set). Since $|S_3|=10$ and $|S_4|=11$, we have $R_{2}^{\mathcal{SP}}(6,7)\geq 11$ and $R_{2}^{\mathcal{SP}}(6,8)\geq 12$. Let $G=(K,I)$ be a split graph on $11$ (resp. $12$) vertices. We claim $G$ has either a $2$-dense $6$-set or a $2$-sparse $7$-set (resp. $2$-sparse $8$-set). If $|K|\geq 6$ or $|I|\geq 7$ (resp. $|I|\geq 8$), we are done. So, since $|G|=11$ (resp. $|G|=12$), assume $|C|= 5$ and $|I| = 6$ (resp. $|I|=7$). If a vertex $u\in C$ has at most two neighbors in $I$, then $\{u\}\cup I$ is a $2$-sparse $7$-set (resp. $2$-sparse $8$-set) and we are done. So, assume each one of the five vertices in $K$ has at least three neighbors in $I$, which implies there are at least 15 edges between $K$ and $I$. Since $2|I|<15$, by the pigeonhole principle, there exists $v\in I$ such that $v$ has at least three neighbors in $K$. Thus, $\{v\}\cup K$ is a $2$-dense $6$-set and we are done. As a result, $R_{2}^{\mathcal{SP}}(6,7)=11$ (resp. $R_{2}^{\mathcal{SP}}(6,8)=12$). 

\begin{figure}[h]
	\begin{tikzpicture}
	\node[thick,minimum size=3cm] at (-0.3,1) {$x_1$};
	\node[d] at (0,1) {};
	\node[thick,minimum size=3cm]  at (-0.3,0) {$x_2$};
	\node[d] at (0,0) {};
	\node[thick,minimum size=3cm] at (-1.3,2) {$x_3$};
	\node[d] at (-1,2) {};
	\node[thick,minimum size=3cm] at (-1.3,-1) {$x_4$};
	\node[d] at (-1,-1) {};
	\node[thick,minimum size=3cm] at (1.3,3) {$y_1$};
	\node[d] at (1,3) {};
	\node[thick,minimum size=3cm] at (1.3,2) {$y_2$};
	\node[d] at (1,2) {};
	\node[thick,minimum size=3cm] at (1.3,1) {$y_3$};
	\node[d] at (1,1) {};
	\node[thick,minimum size=3cm] at (1.3,0) {$y_4$};
	\node[d] at (1,0) {};
	\node[thick,minimum size=3cm] at (1.3,-1) {$y_5$};
	\node[d] at (1,-1) {};
	\node[thick,minimum size=3cm] at (1.3,-2) {$y_6$};
	\node[d] at (1,-2) {};
	
	\draw (0,1) to (1,3)
	(0,1) to (1,2)
	(0,1) to (1,1)
	(-1,2) to (1,3)
	(-1,2) to (1,2)
	(-1,2) to (1,1)
	(0,0) to (1,0)
	(0,0) to (1,-1)
	(0,0) to (1,-2)
	(-1,-1) to (1,-2)
	(-1,-1) to (1,-1)
	(-1,-1) to (1,0)
	(0,1) to (-1,2)
	(0,1) to (-1,-1)
	(0,1) to (0,0)
	(-1,2) to (-1,-1)
	(-1,2) to (0,0)
	(0,0) to (-1,-1);
	\end{tikzpicture}
	\hfill
	\begin{tikzpicture}
	\node[thick,minimum size=3cm] at (0.2,0.8) {$x_1$};
	\node[d] at (0,1) {};
	\node[thick,minimum size=3cm]  at (0.2,0.15) {$x_2$};
	\node[d] at (0,0) {};
	\node[thick,minimum size=3cm] at (-1.3,2) {$x_3$};
	\node[d] at (-1,2) {};
	\node[thick,minimum size=3cm] at (-1.3,-1) {$x_4$};
	\node[d] at (-1,-1) {};
	\node[thick,minimum size=3cm] at (-2.3,0.5) {$x_5$};
	\node[d] at (-2,0.5) {};
	\node[thick,minimum size=3cm] at (1.3,3) {$y_1$};
	\node[d] at (1,3) {};
	\node[thick,minimum size=3cm] at (1.3,2) {$y_2$};
	\node[d] at (1,2) {};
	\node[thick,minimum size=3cm] at (1.3,1) {$y_3$};
	\node[d] at (1,1) {};
	\node[thick,minimum size=3cm] at (1.3,0) {$y_4$};
	\node[d] at (1,0) {};
	\node[thick,minimum size=3cm] at (1.3,-1) {$y_5$};
	\node[d] at (1,-1) {};
	\node[thick,minimum size=3cm] at (1.3,-2) {$y_6$};
	\node[d] at (1,-2) {};
	
	\draw (0,1) to (1,3)
	(0,1) to (1,2)
	(0,1) to (1,1)
	(-1,2) to (1,3)
	(-1,2) to (1,2)
	(-1,2) to (1,1)
	(0,0) to (1,0)
	(0,0) to (1,-1)
	(0,0) to (1,-2)
	(-1,-1) to (1,-2)
	(-1,-1) to (1,-1)
	(-1,-1) to (1,0)
	(0,1) to (-1,2)
	(0,1) to (-1,-1)
	(0,1) to (0,0)
	(-1,2) to (-1,-1)
	(-1,2) to (0,0)
	(0,0) to (-1,-1)
	(-2,0.5) to (0,0)
	(-2,0.5) to (-1,2)
	(-2,0.5) to (-1,-1)
	(-2,0.5) to (0,1);
	\end{tikzpicture}
\vspace{-1cm}
	\caption{Extremal graphs $S_3$ (on the left) and $S_4$ (on the right) for  $R_{2}^{\mathcal{SP}}(6,7)$ and $R_{1}^{\mathcal{SP}}(6,8)$ respectively.} \label{fig:split_II}
\end{figure}
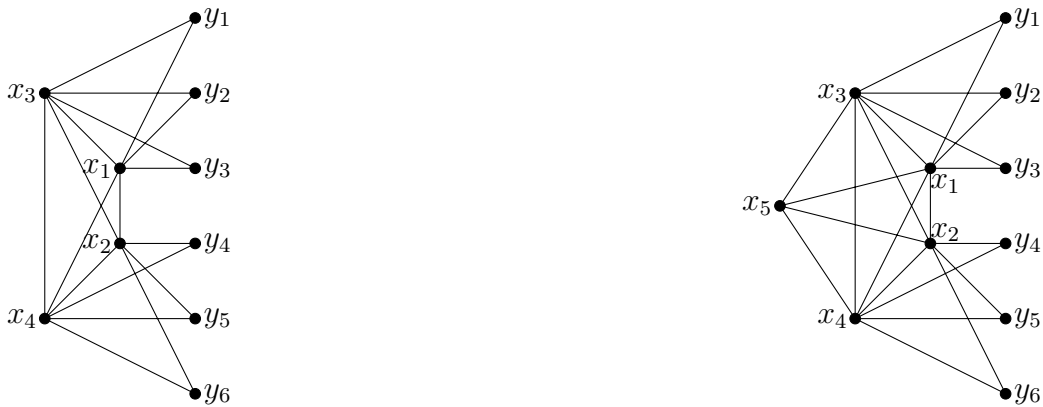

\begin{figure}[h]
	\begin{center}
		
		\begin{subfigure}[b]{0.30\textwidth}
			\centering
			\begin{tikzpicture}
			\node[thick,minimum size=3cm] at (-0.3,2) {$x_1$};
			\node[d] at (0,2) {};
			\node[thick,minimum size=3cm]  at (0.7,1.15) {$x_2$};
			\node[d] at (0.5,1) {};
			\node[thick,minimum size=3cm] at (-0.3,0) {$x_3$};
			\node[d] at (-0,0) {};
			\node[thick,minimum size=3cm]  at (-0.8,1) {$x_4$};
			\node[d] at (-0.5,1) {};
			\node[thick,minimum size=3cm] at (1.8,2) {$y_1$};
			\node[d] at (1.5,2) {};
			\node[thick,minimum size=3cm] at (1.8,1) {$y_2$};
			\node[d] at (1.5,1) {};
			\node[thick,minimum size=3cm] at (1.8,0) {$y_3$};
			\node[d] at (1.5,0) {};
			\draw (0,2) to (1.5,2)
			(0,0) to (1.5,0)
			(0.5,1) to (1.5,1)
			(0,2) to (0,0)
			(0,2) to (-0.5,1)
			(0,2) to (0.5,1)
			(0,0) to (-0.5,1)
			(0,0) to (0.5,1)
			(0.5,1) to (-0.5,1);
			\end{tikzpicture}
		\end{subfigure}
		\begin{subfigure}[b]{0.30\textwidth}
			\centering
			\begin{tikzpicture}
			\node[thick,minimum size=3cm] at (-0.3,2) {$x_1$};
			\node[d] at (0,2) {};
			\node[thick,minimum size=3cm]  at (0.7,1.15) {$x_2$};
			\node[d] at (0.5,1) {};
			\node[thick,minimum size=3cm] at (-0.3,0) {$x_3$};
			\node[d] at (-0,0) {};
			\node[thick,minimum size=3cm]  at (-0.8,1) {$x_4$};
			\node[d] at (-0.5,1) {};
			\node[thick,minimum size=3cm] at (1.8,1.5) {$y_1$};
			\node[d] at (1.5,1.5) {};
			\node[thick,minimum size=3cm] at (1.8,1) {$y_2$};
			\node[d] at (1.5,1) {};
			\node[thick,minimum size=3cm] at (1.8,0) {$y_4$};
			\node[d] at (1.5,0) {};
			\node[thick,minimum size=3cm] at (1.8,0.5) {$y_3$};
			\node[d] at (1.5,0.5) {};
			\draw (0,2) to (1.5,1.5)
			(0,0) to (1.5,0.5)
			(0.5,1) to (1.5,1)
			(0,2) to (0,0)
			(0,2) to (-0.5,1)
			(0,2) to (0.5,1)
			(0,0) to (-0.5,1)
			(0,0) to (0.5,1)
			(0.5,1) to (-0.5,1);
			\end{tikzpicture}
		\end{subfigure}
		\begin{subfigure}[b]{0.30\textwidth}
			\centering
			\begin{tikzpicture}
			\node[thick,minimum size=3cm] at (-0.3,2) {$x_1$};
			\node[d] at (0,2) {};
			\node[thick,minimum size=3cm]  at (0.7,1.2) {$x_2$};
			\node[d] at (0.5,1) {};
			\node[thick,minimum size=3cm] at (-0.3,0) {$x_3$};
			\node[d] at (-0,0) {};
			\node[thick,minimum size=3cm]  at (-0.8,1) {$x_4$};
			\node[d] at (-0.5,1) {};
			\node[thick,minimum size=3cm] at (1.8,2.25) {$y_1$};
			\node[d] at (1.5,2.25) {};
			\node[thick,minimum size=3cm] at (1.8,1.75) {$y_2$};
			\node[d] at (1.5,1.75) {};
			\node[thick,minimum size=3cm] at (1.8,1.15) {$y_3$};
			\node[d] at (1.5,1.15) {};
			\node[thick,minimum size=3cm] at (1.8,0.75) {$y_4$};
			\node[d] at (1.5,0.75) {};
			\node[thick,minimum size=3cm] at (1.8,0.4) {$y_5$};
			\node[d] at (1.5,0.4) {};
			\node[thick,minimum size=3cm] at (1.8,-0) {$y_6$};
			\node[d] at (1.5,-0) {};
			\draw (0,2) to (1.5,2.25)
			(0,2) to (1.5,1.75)
			(0,0) to (1.5,0.4)
			(0,0) to (1.5,-0)
			(0.5,1) to (1.5,1.15)
			(0.5,1) to (1.5,0.75)
			(0,2) to (0,0)
			(0,2) to (-0.5,1)
			(0,2) to (0.5,1)
			(0,0) to (-0.5,1)
			(0,0) to (0.5,1)
			(0.5,1) to (-0.5,1);
			\end{tikzpicture}
		\end{subfigure}
\vspace{-1cm}
		\caption{Extremal graphs $S_5$ (on the left) and $S_6$ (in the middle) for  $R_{2}^{\mathcal{SP}}(5,6)$ and $R_{2}^{\mathcal{SP}}(5,7)$ respectively. The disjoint union of $S_7$ (on the right) with $j-8$ isolated vertices is an extremal graph for $R_{2}^{\mathcal{SP}}(5,j)$  for all $8\leq j\leq 12$.} \label{fig:split_III}
	\end{center}
\end{figure}
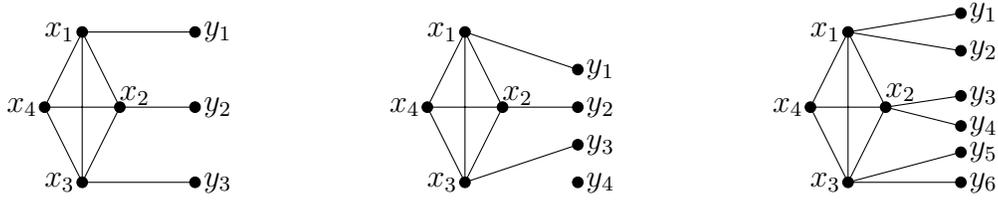

\item[(iii)] Note that the graph $S_5$ (resp. $S_6$) in Figure \ref{fig:split_III} has no $2$-dense $5$-set and no $2$-sparse $6$-set (resp. no $2$-sparse $7$-set). Since $|S_5|=7$ and $|S_6|=8$, this gives $R_{2}^{\mathcal{SP}}(5,6)\geq 8$ and $R_{2}^{\mathcal{SP}}(5,7)\geq 9$. Let $G=(K,I)$ be a split graph on $8$ (resp. $9$) vertices. We claim $G$ has either a $2$-dense $5$-set or a $2$-sparse $6$-set (resp. $2$-sparse $7$-set). If $|K|\geq 5$ or $|I|\geq 6$ (resp. $|I|\geq 7$), we are done. Assume $|K|\leq 4$ and $|I|\leq 5$ (resp. $|I|\leq 6$). Since $|G|=8$ (resp. $|G|=9$), we have two cases:
\begin{itemize}
	\item Suppose $|K|=4$ and $|I|=4$ (resp. $|I|=5$). If a vertex $v\in I$ has at least two neighbors in $K$, then $\{v\}\cup K$ is a $2$-dense $5$-set and we are done. So, assume each vertex in $I$ has at most one neighbor in $K$, hence there are at most $|I|$ edges between $K$ and $I$. Since $|I|<6$, by the pigeonhole principle, there are two vertices $x,y\in K$ such that each of $x$ and $y$ has at most one neighbor in $I$. Thus, $\{x,y\}\cup I$ is a $2$-sparse $6$-set (resp. $2$-sparse $7$-set) and we are done.
	\item Suppose $|K|=3$ and $|I|=5$ (resp. $|I|=6$). If a vertex $u\in K$ has at most two neighbors in $I$, then $\{u\}\cup I$ is a $2$-sparse $6$-set (resp. $2$-sparse $7$-set) and we are done. So, assume each one of the three vertices in $K$ has at least three neighbors in $I$, which implies there are at least $9$ edges between $K$ and $I$. Since $|K|=3$ and $|I|<7$, by the pigeonhole principle, there are two vertices $z,w\in I$ such that each of $z$ and $w$ has at least two neighbors in $K$. Thus, $\{z,w\}\cup K$ is a $2$-dense $5$-set and we are done. 
\end{itemize}
In either case, we conclude that $R_{2}^{\mathcal{SP}}(5,6)=8$ (resp. $R_{2}^{\mathcal{SP}}(5,7)=9$).

\item[(iv)]  It can be seen that the graph $S_7$ in Figure \ref{fig:split_III} has no $2$-dense $5$-set and no $2$-sparse $8$-set, and $|S_7|=10$. Construct the graph $S$ as the disjoint union of $S_7$ and $j-8$ isolated vertices. Clearly, $S$ is a split graph on $j+2$ vertices which has no $2$-dense $5$-set and no $2$-sparse $j$-set, which implies $R_{2}^{\mathcal{SP}}(5,j)\geq j+3$. Let $G=(K,I)$ be a split graph on $j+3$ vertices for some $8\leq j\leq 12$. We claim $G$ has either a $2$-dense $5$-set or a $2$-sparse $j$-set. If $|K|\geq 5$ or $|I|\geq j$, we are done. So, assume $|K|\leq 4$ and $|I|\leq j-1$. Since $|G|=j+3$, we get $|K|=4$ and $|I|=j-1$. If a vertex $u\in K$ has at most two neighbors in $I$, then $\{u\}\cup I$ is a $2$-sparse $j$-set and we are done. Hence, assume each one of the four vertices in $K$ has at least three neighbors in $I$, which implies there are at least 12 edges between $K$ and $I$. Since $|I|<12$, by the pigeonhole principle, there exists $v\in I$ such that $v$ has at least two neighbors in $K$. Thus, $\{v\}\cup K$ is a $2$-dense $5$-set and we are done. \qed
\end{itemize}

\end{proof}

\noindent Finally, in light of the above results, we conjecture the following formula for all defective Ramsey numbers in split graphs.

\begin{conjecture}\label{conj:splitmostgeneral}
	For all $i,j\geq k+2$,  $$R_k^{\mathcal{SP}}(i,j)=i+j-1-\max\Bigg\{0,\Big\lceil\dfrac{(k+1)^2-(i-k-2)(j-k-2)}{\min\{i,j\}}\Big\rceil\Bigg\}.$$
\end{conjecture}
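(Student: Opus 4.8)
The plan is to prove both inequalities $R_k^{\mathcal{SP}}(i,j) \le i+j-1-p$ and $R_k^{\mathcal{SP}}(i,j) \ge i+j-1-p$, where $p = \max\{0, \lceil ((k+1)^2-(i-k-2)(j-k-2))/\min\{i,j\}\rceil\}$. Since the split class is self-complementary, Remark \ref{rem:defectivecomplement} lets me assume $i \le j$, so $\min\{i,j\} = i$; write $N = (k+1)^2-(i-k-2)(j-k-2)$. When $N \le 0$ we have $p = 0$ and the statement is exactly Theorem \ref{thm:splitsufficientlylarge}, so the entire content lies in the regime $N > 0$ (equivalently $(i-k-2)(j-k-2) < (k+1)^2$), where $p = \lceil N/i\rceil$. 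Throughout I would use the reformulation that $R_k^{\mathcal{SP}}(i,j)-1$ is the largest order of a split graph $G$ having neither a $k$-dense $i$-set nor a $k$-sparse $j$-set; since $\overline{G}$ is again split and a $k$-dense set of $G$ is a $k$-sparse set of $\overline{G}$, this equals the maximum of $|V(G)|$ subject to $\alpha_k(G) \le j-1$ and $\alpha_k(\overline{G}) \le i-1$.

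For the upper bound I would generalize the argument of Theorem \ref{thm:splitdiagonal}. Let $G=(K,I)$ have order $i+j-1-p$ and suppose it has no $k$-sparse $j$-set; the goal is a $k$-dense $i$-set. As there, the absence of a $k$-sparse $j$-set forces $|I| \le j-1$, hence $|K| \ge i-p$. I would then let $S \subseteq K$ be the set of clique vertices missing at most $i-k-2$ vertices of $I$, show by a double count that $|S|$ is large, and assemble a $k$-dense $i$-set from $S$ together with a carefully chosen block of independent vertices that each retain enough neighbours in $S$. The quantitative heart is an averaging step: the no-$k$-sparse-$j$-set hypothesis forces a lower bound on the total number of $K$--$I$ edges, while realizing a $k$-dense $i$-set requires the independent vertices used to have clique-degree at least $i-k-1$; balancing these through a pigeonhole over the $i$ vertices of the candidate dense set is where the denominator $\min\{i,j\}=i$ and the ceiling in $p$ should emerge.

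For the lower bound I would construct an extremal split graph of order $i+j-2-p$ unifying the two constructions already in the paper (the even edge-distribution $H$ of Theorem \ref{thm:splitsufficientlylarge} and the disjoint-bag graph $G_1$ of Theorem \ref{thm:splitdiagonal}). The design principles are: every clique vertex should miss at most $j-k-2$ independent vertices (so it forces non-sparseness in any $j$-set containing it); every independent vertex should be adjacent to at most $i-k-2$ clique vertices (so it forces non-density in any $i$-set containing it); and the sizes of the two ``non-blocking'' parts should be tuned to exactly $i-1$ and $j-1$, so that every $i$-set necessarily meets a density-blocker and every $j$-set necessarily meets a sparseness-blocker. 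Having fixed this skeleton, the remaining freedom is how many $K$--$I$ edges to place; distributing them as evenly as possible and invoking a Gale--Ryser-type realizability check should convert the constraint $(i-k-2)(j-k-2) < (k+1)^2$ into the loss of exactly $\lceil N/i\rceil$ vertices relative to $i+j-2$.

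The step I expect to be the main obstacle is the lower-bound construction together with matching it to the upper bound. The two templates inherited from the paper are sharp only in complementary sub-regimes: the bag construction is sharp near the diagonal $i \approx j$, whereas the even-distribution construction is sharp when the deficit can be spread over many independent vertices, and neither alone obviously attains $i+j-2-\lceil N/\min\{i,j\}\rceil$ across the full range, particularly when $j$ is substantially larger than $i$ (a direct attempt with a single balanced pair of bags there loses more than $\lceil N/i\rceil$ vertices). I would therefore expect to need a genuine hybrid, with a few clique vertices absorbing the large-$j$ spread by even distribution and the rest organised into bags, matched by a correspondingly refined pigeonhole in the upper bound; and I would pay close attention to the floor/ceiling bookkeeping, since it is precisely this rounding that separates the conjectured formula from the slightly weaker bounds that the unoptimised constructions yield, and that would also reveal whether the formula requires adjustment outside the already verified ranges.
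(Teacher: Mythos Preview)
This statement is a \emph{conjecture} in the paper, not a theorem: the authors explicitly leave it open and provide no proof. There is therefore nothing in the paper to compare your proposal against. What you have written is not a proof but a research plan, and you yourself flag this, noting that ``neither alone obviously attains $i+j-2-\lceil N/\min\{i,j\}\rceil$ across the full range'' and that the rounding analysis ``would also reveal whether the formula requires adjustment outside the already verified ranges.''

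That said, your plan is a sensible starting point: the upper-bound strategy of generalising the counting argument of Theorem~\ref{thm:splitdiagonal}, and the lower-bound strategy of interpolating between the even-distribution construction of Theorem~\ref{thm:splitsufficientlylarge} and the bag construction of Theorem~\ref{thm:splitdiagonal}, are the natural moves. But the critical step---producing a split graph of order exactly $i+j-2-\lceil N/i\rceil$ with no $k$-dense $i$-set and no $k$-sparse $j$-set for every $(i,j,k)$ in the regime $0<N$---is not carried out, and you have not verified that such a graph exists even in a single new case beyond those already settled in the paper (Theorems~\ref{thm:splitsufficientlylarge}, \ref{thm:splitdiagonal}, \ref{thm:split1and2defective}). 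Until that construction is made explicit and checked, or the upper bound is proved in a way that forces the construction to exist, the conjecture remains open.
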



\section{Cographs}\label{sec:cographs}

\textit{Cographs} are the graphs that can be generated from a single vertex by taking complements and disjoint unions of two cographs. Let $P_4$ be a path on four vertices. It is known that a graph is a cograph if and only if it does not contain $P_4$ as an induced subgraph \cite{complementcograph}. Since the complement of a $P_4$ is again a $P_4$, cographs form a self-complementary graph class.  Alternatively, a graph $G$ is a cograph if and only if the complement of every connected subgraph of $G$ is disconnected. Throughout this paper, we will use the latter definition of cographs.  Let $\mathcal{CO}$ denote the class of cographs. The classical Ramsey numbers in cographs are given by the following formula in \cite{ramseygraphclassesHeggernes}. 
\begin{theorem}\cite{ramseygraphclassesHeggernes} \label{thm:cographgeneralRamsey}
$R_{0}^{\mathcal{CO}}(i,j)=1+(i-1)(j-1)$ for all $i,j\geq 3$.
\end{theorem}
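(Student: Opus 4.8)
The plan is to establish the two inequalities $R_0^{\mathcal{CO}}(i,j)\ge 1+(i-1)(j-1)$ and $R_0^{\mathcal{CO}}(i,j)\le 1+(i-1)(j-1)$ separately. For the lower bound I would take $G$ to be the disjoint union of $j-1$ copies of $K_{i-1}$; this is a cograph (a complete graph is a cograph and cographs are closed under disjoint union), it has $(i-1)(j-1)$ vertices, its largest clique --- i.e.\ $0$-dense set --- has size $i-1$ since each component is a clique and distinct components are nonadjacent, and by Remark~\ref{rem:ksparseunion} its largest $0$-sparse (independent) set has size $(j-1)\cdot 1=j-1$. Hence $G$ has neither a $0$-dense $i$-set nor a $0$-sparse $j$-set, which forces $R_0^{\mathcal{CO}}(i,j)\ge 1+(i-1)(j-1)$.

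For the upper bound, the main step is the structural inequality
\[
|V(G)|\le \omega(G)\,\alpha(G)\qquad\text{for every cograph }G,
\]
where $\omega(G)$ and $\alpha(G)$ denote the clique number and the independence number. I would prove this by induction on $|V(G)|$ using the recursive structure of cographs recalled above: a single vertex satisfies $1\le 1\cdot 1$, and a cograph on at least two vertices is either disconnected, hence a disjoint union $G=G_1\cup G_2$ of two smaller nonempty cographs, or connected with disconnected complement, hence a join $G=G_1+G_2$ of two smaller nonempty cographs (both $G_1,G_2$ being induced subgraphs of $G$, they are again cographs). In the disjoint-union case $\omega(G)=\max\{\omega(G_1),\omega(G_2)\}$ and $\alpha(G)=\alpha(G_1)+\alpha(G_2)$ by Remark~\ref{rem:ksparseunion}, so the induction hypothesis gives
\[
|V(G)|=|V(G_1)|+|V(G_2)|\le \omega(G_1)\alpha(G_1)+\omega(G_2)\alpha(G_2)\le \omega(G)\bigl(\alpha(G_1)+\alpha(G_2)\bigr)=\omega(G)\alpha(G).
\]
The join case is entirely symmetric: here $\omega(G)=\omega(G_1)+\omega(G_2)$ and $\alpha(G)=\max\{\alpha(G_1),\alpha(G_2)\}$, so the same computation with the roles of $\omega$ and $\alpha$ interchanged applies; alternatively it follows from the disjoint-union case by passing to complements, using that cographs form a self-complementary class (Remark~\ref{rem:defectivecomplement}).

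With the claim proved, the upper bound is immediate: if $G$ is a cograph on $1+(i-1)(j-1)$ vertices, then $\omega(G)\alpha(G)\ge 1+(i-1)(j-1)>(i-1)(j-1)$, so it is impossible that both $\omega(G)\le i-1$ and $\alpha(G)\le j-1$; hence $G$ contains a clique of size $i$ (a $0$-dense $i$-set) or an independent set of size $j$ (a $0$-sparse $j$-set). Together with the lower-bound construction this yields $R_0^{\mathcal{CO}}(i,j)=1+(i-1)(j-1)$. I do not anticipate a genuine difficulty here; the only point that needs care is the decomposition step, where one must make sure that the two pieces $G_1,G_2$ are nonempty, strictly smaller than $G$, and themselves cographs, so that the induction is well founded.
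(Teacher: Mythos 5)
Your proof is correct. Note that the paper does not prove this statement itself --- it is quoted from Belmonte et al.\ \cite{ramseygraphclassesHeggernes} --- so there is no in-paper argument to compare against; your two-sided argument (the extremal graph $(j-1)K_{i-1}$ for the lower bound, and the inequality $|V(G)|\le \omega(G)\,\alpha(G)$ proved by induction on the union/join decomposition for the upper bound) is the standard and complete proof, and the upper-bound inequality is in fact just the perfect-graph bound $|V(G)|\le \chi(G)\,\alpha(G)=\omega(G)\,\alpha(G)$ specialized to cographs.
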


\begin{remark}\label{rem:cographkplustwo}
$R_{k}^{\mathcal{CO}}(k+2,j)=R_{k}^{\mathcal{CO}}(j,k+2)=j$ for all $j\geq k+2$.
\end{remark}

\begin{proof}
Since the complement of a cograph is also a cograph, by Remark \ref{rem:defectivecomplement}, we only need to prove that $R_k^{\mathcal{CO}}(k+2,j)=j$ for $j\geq k+2$. Moreover, since an empty graph is a cograph, the result follows from Lemma \ref{lem:generalkplustwo}. \qed
\end{proof}

\begin{lemma}\label{lem:cographconnected}
Let $1\leq s\leq k$ and $G$ be a connected cograph on at least $k+s+2$ vertices which has no $k$-dense $(k+s+2)$-set. Then, there exists a subgraph $L$ of $G$ with $1\leq |L|\leq s$ such that each vertex of $L$ is adjacent to all vertices of $G-L$. 
\end{lemma}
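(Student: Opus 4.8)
The plan is to exploit the fact that a connected cograph is a join. Since $G$ is a connected cograph, the definition of cographs we adopted says that $\overline{G}$ (being the complement of a connected subgraph of $G$, namely $G$ itself) is disconnected; hence we may partition $V(G)$ into two nonempty sets $A$ and $B$ with $A$ complete to $B$ in $G$. I claim that one of $A,B$ already serves as $L$: I will show $\min\{|A|,|B|\}\le s$, and then $L$ can be taken to be whichever of $A$ and $B$ has at most $s$ vertices — it is nonempty, so $1\le|L|\le s$, and it is complete to $G-L$ by construction.

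To prove $\min\{|A|,|B|\}\le s$, I would argue by contradiction, assuming $|A|\ge s+1$ and $|B|\ge s+1$, and build a $k$-dense $(k+s+2)$-set straddling $A$ and $B$. Set $a=\max\{s+1,\ (k+s+2)-|B|\}$. Using $s\le k$ (so $s+1\le k+1$) together with $|A|+|B|=|V(G)|\ge k+s+2$, one checks the chain $s+1\le a\le\min\{k+1,|A|\}$; putting $b=(k+s+2)-a$ then also gives $s+1\le b\le|B|$. Choose $D_A\subseteq A$ with $|D_A|=a$ and $D_B\subseteq B$ with $|D_B|=b$, and let $D=D_A\cup D_B$, so $|D|=k+s+2$. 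Because $A$ is complete to $B$, every vertex of $D_A$ has at least $|D_B|=b\ge s+1$ neighbours in $D$, and every vertex of $D_B$ has at least $|D_A|=a\ge s+1$ neighbours in $D$. Thus each vertex of $D$ misses at most $|D|-1-(s+1)=k$ other vertices of $D$, i.e.\ $D$ is a $k$-dense $(k+s+2)$-set, contradicting the hypothesis on $G$.

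Hence $\min\{|A|,|B|\}\le s$, and taking $L$ to be the smaller part completes the proof. The only step needing care is the verification that $a$ lies in the stated range, which is a short computation and is precisely where the two hypotheses $s\le k$ and $|V(G)|\ge k+s+2$ are used (the former to keep the size window $[s+1,k+1]$ for $|D_A|$ nonempty, the latter to guarantee that $A$ and $B$ jointly contain enough vertices). No induction on $|V(G)|$ and no explicit manipulation of the cotree seems necessary.
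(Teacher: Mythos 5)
Your proof is correct. It takes the same structural entry point as the paper (a connected cograph has a disconnected complement, hence is a join of two nonempty parts $A$ and $B$), but the execution differs in two ways. First, the paper does not take an arbitrary join bipartition: it defines $L$ as the union of \emph{all} connected components of $\overline{G}$ of order at most $k$, which forces it to handle an extra case (ruling out two components of $\overline{G}$ of order at least $k+1$ by exhibiting a $k$-sparse $(2k+2)$-set) before it can even conclude $|L|\geq 1$; your choice of $L$ as the smaller side of any join bipartition sidesteps that entirely. Second, the paper builds its forbidden set as a $k$-sparse set in $\overline{G}$ with the rigid split $s+1$ plus $k+1$, which only works because its big part is guaranteed to have at least $k+1$ vertices, whereas you construct the $k$-dense $(k+s+2)$-set directly in $G$ with the flexible split $a+b=k+s+2$, $a,b\geq s+1$, and your verification that $a$ can be chosen in $[\,\max\{s+1,(k+s+2)-|B|\},\ \min\{k+1,|A|\}\,]$ is exactly where the hypotheses $s\leq k$ and $|V(G)|\geq k+s+2$ enter; this is correct and is the only delicate step. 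The two proofs are of comparable length, but yours is arguably cleaner since it avoids the component-by-component analysis of $\overline{G}$; the paper's version has the mild advantage that its $L$ is canonical (the union of all small complement-components), though nothing in the statement or its later use requires that.
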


\begin{proof}
Note that $\overline{G}$ is a disconnected cograph which has no $k$-sparse $(k+s+2)$-set. Let $L$ be the union of connected components of $\overline{G}$ each one of whose size is at most $k$. Observe that each vertex of $L$ is adjacent to all vertices of $G-L$ in $G$. We claim $1\leq |L|\leq s$. We note that $L$ forms a $k$-sparse set in $\overline{G}$, thus we have $|L|\leq k+s+1$ since $\overline{G}$ has no $k$-sparse $(k+s+2)$-set. This implies $|\overline{G}-L|\geq 1$, in other words, by definition of $L$, $\overline{G}$ has at least one connected component of size at least $k+1$. If there are two connected components in $\overline{G}$ each one of size at least $k+1$, say $C_1$ and $C_2$, then by taking $k+1$ vertices from each of $C_1$ and $C_2$ we would get a $k$-sparse $(2k+2)$-set (by Remark \ref{rem:sparseunion}) where $2k+2 \geq k+s+2$ since $s\leq k$, a contradiction. Therefore, $\overline{G}-L$ has a unique connected component, implying $|L|\geq 1$ since $\overline{G}$ is disconnected. Finally, if $|L|\geq s+1$, we would get a $k$-sparse $(k+s+2)$-set by taking $s+1$ elements from $L$ and $k+1$ elements from $\overline{G}-L$ from Remark \ref{rem:sparseunion}. As a result, we have $1\leq |L|\leq s$. \qed
\end{proof}

\begin{lemma}\label{lem:cographsmalliandj}
Let $k+2\leq i,j\leq 2k+2$. Then, $R_{k}^{\mathcal{CO}}(i,j)=i+j-k-2$.
\end{lemma}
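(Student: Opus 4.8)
The plan is to prove the two inequalities $R_k^{\mathcal{CO}}(i,j)\ge i+j-k-2$ and $R_k^{\mathcal{CO}}(i,j)\le i+j-k-2$ separately, the first by an explicit construction and the second by induction on $i+j$ using Lemma~\ref{lem:cographconnected}. I would dispose of the boundary cases $i=k+2$ or $j=k+2$ at the outset by Remark~\ref{rem:cographkplustwo}, which yields exactly $j$ (resp.\ $i$), matching the formula; so the genuine work is confined to $k+3\le i,j\le 2k+2$.

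For the lower bound I would take $G=K_{i-k-2}\vee\overline{K_{j-1}}$, the join of a clique $A$ with $|A|=i-k-2$ and an independent set $B$ with $|B|=j-1$; this is a cograph on $i+j-k-3$ vertices. Any vertex of $A$ is adjacent to every other vertex of $G$, so a $k$-sparse set meeting $A$ has size at most $k+1\le j-1$, while $B$ itself is $k$-sparse of size $j-1$; hence $\alpha_k(G)=j-1$ and $G$ has no $k$-sparse $j$-set. Passing to the complement $\overline{G}=\overline{K_{i-k-2}}\cup K_{j-1}$ and using Remark~\ref{rem:sparseunion}, the largest $k$-sparse set of $\overline{G}$ has size $(i-k-2)+\min\{j-1,k+1\}=(i-k-2)+(k+1)=i-1$, so $G$ has no $k$-dense $i$-set. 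This gives $R_k^{\mathcal{CO}}(i,j)\ge i+j-k-2$.

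For the upper bound I would induct on $i+j$, with base cases $\min\{i,j\}=k+2$ handled by Remark~\ref{rem:cographkplustwo}. Let $G$ be a cograph on $n=i+j-k-2$ vertices with, for contradiction, no $k$-dense $i$-set and no $k$-sparse $j$-set. Since a graph and its complement cannot both be disconnected and cographs are self-complementary (Remark~\ref{rem:defectivecomplement}), I may assume $G$ is connected: otherwise I replace $G$ by the connected cograph $\overline{G}$ and swap the roles of $i$ and $j$, which turns the two forbidden configurations into the forbidden configurations for the swapped parameters while leaving $n$ fixed. Now I apply Lemma~\ref{lem:cographconnected} with $s=i-k-2\in\{1,\dots,k\}$ (legitimate because $n\ge i=k+s+2$ and $G$ has no $k$-dense $(k+s+2)$-set) to obtain a set $L$ with $1\le|L|\le i-k-2$, each of whose vertices is adjacent to every vertex of $H:=G-L$, so that $G=G[L]\vee H$. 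Then $|H|=n-|L|=(i-|L|)+j-k-2=R_k^{\mathcal{CO}}(i-|L|,j)$ by the induction hypothesis (valid since $k+2\le i-|L|<i$ keeps the pair in range and lowers $i+j$), and so $H$ contains either a $k$-sparse $j$-set or a $k$-dense $(i-|L|)$-set. In the first case this set is already a $k$-sparse $j$-set of $G$; in the second case I adjoin $L$ to a $k$-dense $(i-|L|)$-set $D'$ to form the $i$-set $L\cup D'$.

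The step I expect to require the most care is verifying that $L\cup D'$ is $k$-dense, since this is exactly where the hypothesis $i\le 2k+2$ enters: a vertex of $D'$ misses at most $k$ vertices inside $D'$ and none of $L$, while a vertex of $L$ misses only vertices of $L$, hence at most $|L|-1\le i-k-3\le k$ others. As both counts are at most $k$, the set $L\cup D'$ is a $k$-dense $i$-set, contradicting the assumption and closing the induction. The remaining points are bookkeeping that I would carry out carefully: confirming that the complement reduction swaps $i$ and $j$ while preserving both the cograph property and the vertex count, and that the reduced pair $(i-|L|,j)$ stays inside the admissible window $[k+2,2k+2]$ so that either the induction hypothesis or the base case of Remark~\ref{rem:cographkplustwo} legitimately applies.
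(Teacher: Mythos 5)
Your proof is correct, and while your lower bound matches the paper's construction exactly (the join of a clique on $i-k-2$ vertices with an independent set on $j-1$ vertices; the paper verifies the absence of a $k$-dense $i$-set directly, you do it via the complement), your upper-bound argument takes a genuinely different route. The paper works with an arbitrary, possibly disconnected cograph $G$ on $i+j-k-2$ vertices and runs a component analysis: if every component has at most $k$ vertices then $G$ is $k$-sparse; if two components have at least $k+1$ vertices then a $k$-sparse $(2k+2)$-set exists and $2k+2\geq j$; otherwise there is a unique large component $C$, Lemma \ref{lem:cographconnected} is applied to $C$, the induction hypothesis is invoked for the pair $(i-|L|,\alpha_k(C)+1)$, and an $\alpha_k$-count over components (Remark \ref{rem:ksparseunion}) yields $\alpha_k(G)\geq j$. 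You instead dispose of disconnected $G$ at once by passing to $\overline{G}$ and swapping $i$ and $j$ --- legitimate because cographs are closed under complementation, the complement of a disconnected graph is connected, and both the formula $i+j-k-2$ and the window $[k+2,2k+2]$ are symmetric in $i$ and $j$ --- and then treat connected $G$ by applying Lemma \ref{lem:cographconnected} to $G$ itself and the induction hypothesis to the pair $(i-|L|,j)$. This keeps the sparse parameter fixed, avoids all the $\alpha_k$ bookkeeping, and is shorter; it also makes explicit a step the paper leaves implicit (that a $k$-dense $(i-|L|)$-set of $G-L$ together with $L$ is $k$-dense, which is exactly where $i\leq 2k+2$ enters via $|L|-1\leq i-k-3\leq k$). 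Your complementation step is the mirror image of the one the paper uses later in Theorem \ref{thm:cographkdefective}, where the connected case is reduced to the disconnected one; in that general setting your shortcut would not be available, since Lemma \ref{lem:cographconnected} needs $s=i-k-2\leq k$, so the paper's heavier component machinery is the version that scales up to the full formula. Two trivial citation nits: the self-complementarity of cographs is stated in the prose of Section \ref{sec:cographs} rather than in Remark \ref{rem:defectivecomplement} (which only asserts $R_k^{\mathcal{CO}}(i,j)=R_k^{\mathcal{CO}}(j,i)$), and the computation $\alpha_k(\overline{G})=(i-k-2)+(k+1)$ in your lower bound is really an appeal to Remark \ref{rem:ksparseunion} rather than Remark \ref{rem:sparseunion}.
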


\begin{proof}
We will prove this by strong induction on $i+j$. Firstly, if $i=k+2$ or $j=k+2$  the lemma holds by Remark \ref{rem:cographkplustwo}. Hence assume the lemma holds for $i,j \geq k+2$ and $i+j<t$ for some $t\geq 2k+6$, and take $i,j\geq k+3$ with $i+j=t$. 

Consider the graph $H$ obtained by the join of a clique $K$ of size $i-k-2$ and an independent set $J$ of size $j-1$. Clearly, $H$ is a cograph on $i+j-k-3$ vertices. Note that $H$ has no $k$-dense $i$-set since any set of size $i$ has to contain at least $k+2$ vertices from $J$. Also, $H$ has no $k$-sparse $j$-set since any set of size $j$ has to contain a vertex from $K$, which is adjacent to all the other $i-2\geq k+1$ vertices in the set. Therefore, we have $R_{k}^{\mathcal{CO}}(i,j)\geq i+j-k-2$. 

Now, take a cograph $G$ on $i+j-k-2$ vertices. We will prove that $G$ has either a $k$-dense $i$-set or $k$-sparse $j$-set. If every connected component of $G$ has size at most $k$, then clearly $G$ is $k$-sparse, implying $\alpha_k(G)=|G|=i+j-k-2\geq j$ and we are done. If $G$ has at least two connected component of size at least $k+1$, say $C_1$ and $C_2$, then we can take $k+1$ vertices from each of $C_1$ and $C_2$, which implies $G$ has a $k$-sparse $(2k+2)$-set. Since $2k+2\geq j$, the result follows. Therefore, we can assume $G$ has a unique connected component of size at least $k+1$, say $C$. Note that we have $|G-C|=\alpha_k(G-C)$ and so $\alpha_k(G)=\alpha_k(C)+|G-C|\geq k+1+|G-C|$ from Remark \ref{rem:ksparseunion}. Therefore, if $|G-C|\geq j-k-1$ then $G$ has a $k$-sparse $j$-set and we are done. Thus, assume $|G-C|\leq j-k-2$. Since $|G|=i+j-k-2$, this implies $|C|\geq i$. Now, if $C$ has a $k$-dense $i$-set, we are done. Otherwise, from Lemma \ref{lem:cographconnected}, there exists a subgraph $L$ of $C$ with $1\leq |L|\leq i-k-2$ such that each vertex of $L$ is adjacent to all vertices of $C-L$. Now, since $C$ has no $k$-dense $i$-set, $C-L$ has no $k$-dense $(i-|L|)$-set. Therefore, we have $|C-L|\leq R_{k}^{\mathcal{CO}}(i-|L|,\alpha_k(C)+1)-1$. If $\alpha_k(C)\geq j$ we are done, so assume $\alpha_k(C)\leq j-1$. We can verify as follows that the induction hypothesis is valid for $R_{k}^{\mathcal{CO}}(i-|L|,\alpha_k(C)+1)$. By Lemma \ref{lem:cographconnected}, we have $k+2\leq i-|L|$ and since $i\leq 2k+1$ and $|L|\geq 1$, we have $i-|L| \leq 2k+2$.  We also have $k+2\leq 1+\alpha_k(C)\leq 1+(j-1)\leq 2k+2$ and $i-|L|+\alpha_k(C)+1\leq i+j-1$ since $|L|\geq 1$. From the induction hypothesis, we have $R_{k}^{\mathcal{CO}}(i-|L|,\alpha_k(C)+1)=i-|L|+\alpha_k(C)+1-k-2$. Hence we get $|C|\leq |L|+i-|L|+\alpha_k(C)+1-k-2-1=i-k-2+\alpha_k(C)$. Then, by using Remark \ref{rem:ksparseunion}, we obtain $i+j-k-2=|G|=|C|+|G-C|\leq i-k-2+\alpha_k(C)+\alpha_k(G-C)=i-k-2+\alpha_k(G)$. This implies $\alpha_k(G)\geq j$ and we are done. \qed 
\end{proof}

\begin{lemma}\label{lem:cographmoduloremainder}
Let $\{x\}$ denote the value of the integer $x$ modulo $m$ for some $m\geq 2$. Hence, we have $\{x\}-\{y\}\leq \{x-y\}$.
\end{lemma}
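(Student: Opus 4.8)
The plan is to exploit the fact that $\{x\}-\{y\}$ and $\{x-y\}$ are congruent modulo $m$, and then to pin down the bounded multiple of $m$ by which they can differ. First I would write $x=q_xm+\{x\}$ and $y=q_ym+\{y\}$ with $\{x\},\{y\}\in\{0,1,\dots,m-1\}$, so that $x-y\equiv\{x\}-\{y\}\pmod m$; since also $x-y\equiv\{x-y\}\pmod m$, the integer $\delta:=\{x-y\}-\bigl(\{x\}-\{y\}\bigr)$ is a multiple of $m$.

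Next I would bound $\delta$. Because $\{x-y\}\in[0,m-1]$ while $\{x\}-\{y\}\in[-(m-1),\,m-1]$, we have $\delta\in[-(m-1),\,2m-2]$. The only multiples of $m$ lying in this interval are $0$ and $m$: indeed $-m<-(m-1)$ and $2m>2m-2$ rule out the other candidates, while $m\le 2m-2$ holds precisely because $m\ge 2$. In either case $\delta\ge 0$, that is, $\{x\}-\{y\}\le\{x-y\}$, which is the claim (and for $m=1$ every term is $0$, so the inequality is trivial).

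There is essentially no obstacle here; the only point requiring a little care is the elementary interval arithmetic that eliminates the negative and large multiples of $m$, which is exactly where the standing hypothesis $m\ge 2$ is used. One could alternatively argue by cases on whether $\{x\}\ge\{y\}$, examining whether a ``borrow'' occurs when reducing $x-y$ modulo $m$, but the congruence argument above is shorter and I would present that one.
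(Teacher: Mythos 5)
Your proof is correct. It differs from the paper's argument in a small but genuine way: the paper writes $x=ma+s$, $y=mb+t$ and then computes $\{x-y\}$ explicitly by a case split on whether $s\geq t$ or $s<t$, concluding that $\{x-y\}-(\{x\}-\{y\})$ equals $0$ or $m$ respectively. You instead observe that this difference $\delta$ is congruent to $0$ modulo $m$ and then trap it in the interval $[-(m-1),\,2m-2]$, whose only multiples of $m$ are $0$ and $m$; this reaches the same conclusion (in fact the same two possible values of $\delta$) without splitting into cases. The congruence-plus-interval argument is marginally slicker and makes transparent exactly where $m\geq 2$ is needed (to admit $m$ as a possible value of $\delta$ is irrelevant to the inequality, but to exclude $-m$ you only need $m\geq 1$, so the hypothesis is not actually load-bearing for the stated inequality); the paper's case analysis is more concrete and hands you the exact value of the discrepancy, which is in the same spirit as how the lemma is later applied. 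You correctly flag the case-analysis route as an alternative --- that alternative is precisely the paper's proof.
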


\begin{proof}
Write $x=ma+s$ and $y=mb+t$ for some $0\leq s,t< m$. Then, we have the equality $x-y=m(a-b)+(s-t)$ and so $$\{x-y\}=\begin{cases}
s-t, & \text{ if }s\geq t\\
m+(s-t), & \text{ if } s< t
\end{cases}$$
Since $\{x\}-\{y\}=s-t$, we get $$\{x-y\}-\Big(\{x\}-\{y\}\Big)=\begin{cases}
0, & \text{ if }s\geq t\\
m, & \text{ if }s< t
\end{cases} \,\text{ and so the result follows. }$$ \qed
\end{proof}

\begin{lemma}\label{lem:cographmoduloremainder2}
Let $\{x\}$ denote the value of the integer $x$ modulo $m$ for some $m\geq 2$. Then, $1+\dfrac{ab-\{a\}\{b\}}{m}\geq 1+b$ for all $a,b\in\mathbb{N}$ provided that $a\geq m$.
\end{lemma}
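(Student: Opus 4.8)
The plan is to strip the statement down to an elementary inequality between $a-m$, $\{a\}$ and $\{b\}$. Since $m\geq 2>0$, the claimed inequality $1+\dfrac{ab-\{a\}\{b\}}{m}\geq 1+b$ is equivalent, after subtracting $1$ and multiplying through by $m$, to $ab-\{a\}\{b\}\geq mb$, which rearranges to $b(a-m)\geq \{a\}\{b\}$. So it suffices to establish this last inequality for all $a,b\in\mathbb{N}$ with $a\geq m$.

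Next I would record two easy facts about the residue notation. First, $0\leq \{b\}\leq b$, since the residue of $b$ modulo $m$ never exceeds $b$ itself. Second, $0\leq \{a\}\leq a-m$ whenever $a\geq m$; this is the only point needing a word of justification: writing $a=mq+\{a\}$ with $q\geq 1$ (indeed $q\geq 1$ precisely because $a\geq m$), we get $a-m=m(q-1)+\{a\}\geq \{a\}$ as $q-1\geq 0$.

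Finally I would chain these bounds: $b(a-m)\geq \{b\}(a-m)\geq \{b\}\{a\}=\{a\}\{b\}$, where the first step uses $b\geq \{b\}$ together with $a-m\geq 0$, and the second uses $\{b\}\geq 0$ together with $a-m\geq \{a\}$. This gives $b(a-m)\geq\{a\}\{b\}$ and hence the lemma.

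I do not expect a genuine obstacle here; the only mild care needed is in the boundary situations, and they are all benign. If $a=m$ then $\{a\}=0$ and the inequality $b(a-m)\geq \{a\}\{b\}$ reads $0\geq 0$; if $b=0$ it again reads $0\geq 0$; and the bound $a-m\geq\{a\}$ used in the main step remains valid in every case with $a\geq m$.
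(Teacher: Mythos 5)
Your proposal is correct and follows essentially the same route as the paper: both reduce the claim to the inequality $(a-m)b\geq\{a\}\{b\}$ and deduce it from $b\geq\{b\}\geq 0$ together with $a-m\geq\{a\}\geq 0$ (the paper phrases the latter as $a-m\geq\{a-m\}=\{a\}$). Your explicit justification of $a-m\geq\{a\}$ via $a=mq+\{a\}$ with $q\geq 1$ is a welcome extra word of care, but the argument is the same.
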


\begin{proof}
For any $t\in\mathbb{N}$ with $t-m\geq 0$, we have the following by using the facts $t\geq \{t\}$ and $\{t\}=\{t-m\}$.
\begin{eqnarray*}
	1+\dfrac{ab-\{a\}\{b\}}{m}=1+b+\dfrac{(a-m)b-\{a\}\{b\}}{m}
	&=&1+b+\dfrac{(a-m)b-\{a-m\}\{b\}}{m}\\
	&\geq& 1+b\,\, \text{ and we are done.} 
\end{eqnarray*}\qed
\end{proof}

The following will be used in Theorem \ref{thm:cographkdefective} to show the existence of an extremal graph, thus proving the lower bound.

\begin{lemma}\label{lem:cographinductionextremal}
Let $i,j\geq k+2$. Then, $R_{k}^{\mathcal{CO}}(i,j)\geq (i-1)+R_{k}^{\mathcal{CO}}(i,j-k-1)$.
\end{lemma}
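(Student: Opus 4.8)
The plan is to prove this lower bound by an explicit extremal construction: I will exhibit a cograph on $(i-1)+R_{k}^{\mathcal{CO}}(i,j-k-1)-1$ vertices that contains neither a $k$-dense $i$-set nor a $k$-sparse $j$-set. By the very definition of $R_{k}^{\mathcal{CO}}(i,j-k-1)$, there is a cograph $G'$ on $R_{k}^{\mathcal{CO}}(i,j-k-1)-1$ vertices with no $k$-dense $i$-set and no $k$-sparse $(j-k-1)$-set; in particular $\alpha_k(G')\le j-k-2$. To this I will glue, as a disjoint union, a gadget $H$ on exactly $i-1$ vertices, setting $G=G'\sqcup H$. Since a disjoint union of cographs is a cograph, $G$ is a cograph of the required order $(i-1)+R_{k}^{\mathcal{CO}}(i,j-k-1)-1$.

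The gadget must add exactly $k+1$ to the maximum $k$-sparse set (pushing the sparse threshold from $j-k-1$ up to $j$) while creating no new $k$-dense $i$-set. The naive choice $H=K_{i-1}$ handles the sparse side but fails the dense side in the regime $i\le 2k$, because a small clique already present in $G'$ can combine, across the two components, with clique vertices of $H$ to form a $k$-dense $i$-set. The correct gadget is $H=K_{i-k-2}+\bar K_{k+1}$, the join of a clique on $i-k-2$ vertices with an independent set on $k+1$ vertices (this is exactly an extremal graph for $R_k^{\mathcal{CO}}(i,k+2)$). One checks directly that $|H|=i-1$ and $\alpha_k(H)=k+1$: the independent part is a $k$-sparse set of size $k+1$, while any $k$-sparse set with $t$ clique- and $s$ independent-vertices satisfies $t\le k$ and $t+s\le k+1$. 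Hence, by Remark \ref{rem:ksparseunion}, $\alpha_k(G)=\alpha_k(G')+\alpha_k(H)\le (j-k-2)+(k+1)=j-1$, so $G$ has no $k$-sparse $j$-set.

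The heart of the argument is the dense side, where the structure of $H$ is essential. Suppose $D$ is a $k$-dense $i$-set. It cannot lie inside $G'$ (no $k$-dense $i$-set) nor inside $H$ (only $i-1$ vertices), so it meets both components; write $D_1=D\cap V(G')$ and $D_2=D\cap V(H)$, both nonempty. As $G'$ and $H$ are distinct components, every vertex of $D_1$ misses all of $D_2$, and every clique-vertex of $H$ lying in $D_2$ is adjacent to all of $H$, hence misses exactly $|D_1|$ vertices of $D$, forcing $|D_1|\le k$ whenever $D_2$ meets the clique. I then split into two cases. If $D_2$ contains an independent-set vertex $w$, then $w$ misses all of $D_1$ together with the other independent-set vertices of $D_2$, so $|D_1|+s-1\le k$ where $s=|D_2\cap\bar K_{k+1}|$; combined with $|D_2\cap K_{i-k-2}|\le i-k-2$ this gives $|D|=|D_1|+s+|D_2\cap K_{i-k-2}|\le(k+1)+(i-k-2)=i-1$, a contradiction. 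If instead $D_2\subseteq K_{i-k-2}$, then $|D_2|\le i-k-2$, whence $|D_1|=i-|D_2|\ge k+2>k$, contradicting $|D_1|\le k$. Either way no such $D$ exists.

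Having produced a cograph with no $k$-dense $i$-set and no $k$-sparse $j$-set on $(i-1)+R_k^{\mathcal{CO}}(i,j-k-1)-1$ vertices, I conclude $R_k^{\mathcal{CO}}(i,j)\ge (i-1)+R_k^{\mathcal{CO}}(i,j-k-1)$. The main obstacle is precisely the dense side: the clique gadget is inadequate when $i\le 2k$, and the key insight is that the threshold gadget $K_{i-k-2}+\bar K_{k+1}$ simultaneously pins $\alpha_k$ at $k+1$ and blocks every cross-component dense set. The degenerate cases ($i=k+2$, where $H$ reduces to an independent set, and $j=k+2$, where $G'$ is the empty graph) fall under the same argument and should be verified in passing, but are immediate.
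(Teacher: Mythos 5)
Your proof is correct and takes essentially the same approach as the paper: the same gadget (the join of a clique on $i-k-2$ vertices with an independent set of size $k+1$, called $T$ in the paper), the same disjoint union with an extremal graph for $R_{k}^{\mathcal{CO}}(i,j-k-1)$, the same additivity of $\alpha_k$ over components for the sparse side, and the same case split on whether the would-be dense set meets the independent part of the gadget. One parenthetical nit: your claim ``$t\le k$'' can fail (when $s=0$ and $i-k-2\ge k+1$, a set of $k+1$ clique vertices is $k$-sparse with $t=k+1$), but the bound you actually use, $t+s\le k+1$, is correct, so nothing breaks.
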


\begin{proof}
Let $H$ be a cograph on $R_{k}^{\mathcal{CO}}(i,j-k-1)-1$ vertices which has no $k$-dense $i$-set and no $k$-sparse $(j-k-1)$-set. Let also $T$ be the graph consisting of the join of a clique $K$ of size $i-k-2$ and an independent set $J$ of size $k+1$. We call $U$ the disjoint union of $T$ and $H$. Note that $U$ is a cograph on $(i-k-2)+(k+1)+R_{k}^{\mathcal{CO}}(i,j-k-1)-1=(i-2)+R_{k}^{\mathcal{CO}}(i,j-k-1)$ vertices. Observe that we have $\alpha_k(T)=k+1$ since any set in $T$ of size at least $k+2$ has a vertex from $K$. Hence, $\alpha_k(U)=\alpha_k(H)+\alpha_k(T)\leq (j-k-2)+(k+1)=j-1$. On the other hand, take a set $I$ in $U$ with $|I|=i$. Let $|I\cap J|=a$, $|I\cap H|=b$ and $|I\cap K|=i-a-b$. Firstly, we have $a+b\geq k+2$ since $|K|=i-k-2$. Now, if $a\geq 1$, any vertex $v \in I\cap J$ misses $a-1$ vertices from $I\cap J$ and $b$ vertices from $I\cap H$, so $v$ misses in total $a+b-1\geq k+1$ vertices in $I$. Therefore, $a\geq 1$ implies $I$ is not $k$-dense. If $a=0$, then we have $b\geq k+2$. Moreover, since $H$ has no $k$-dense $i$-set, we get $i-a-b\geq 1$. Now, any vertex $w\in I\cap K$ misses $b\geq k+2$ vertices in $I$, so $I$ is not $k$-dense. As a result, $U$ has no $k$-dense $i$-set and no $k$-sparse $j$-set, which implies $R_{k}^{\mathcal{CO}}(i,j)\geq (i-1)+R_{k}^{\mathcal{CO}}(i,j-k-1)$. \qed
\end{proof}

We are now ready to present the main result of this section, completing all defective Ramsey numbers in cographs.

\begin{theorem}\label{thm:cographkdefective}
	Let $i,j\geq k+2$. Then,	$$\displaystyle{R_k^{\mathcal{CO}}(i,j)=1+\dfrac{(i-1)(j-1)-\{i-1\}\{j-1\}}{k+1}}$$ where $\{x\}$ denotes the value of the integer $x$ modulo $k+1$. 
\end{theorem}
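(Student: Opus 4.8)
The plan is to prove, by strong induction on $i+j$, the single statement $R_k^{\mathcal{CO}}(i,j)=f(i,j)$ for all $i,j\ge k+2$, where $f(i,j):=1+\frac{(i-1)(j-1)-\{i-1\}\{j-1\}}{k+1}$, establishing the lower and upper bounds at the same step. The base cases are $\min\{i,j\}=k+2$ (Remark \ref{rem:cographkplustwo}) and $k+3\le i,j\le 2k+2$ (Lemma \ref{lem:cographsmalliandj}); since $\{i-1\}=i-k-2$ when $i\le 2k+2$, a one-line computation checks $f(i,j)=i+j-k-2$ there, so $f$ agrees with both. Two structural facts drive the induction. First, $f$ is symmetric in $i,j$ and, as cographs are self-complementary, $R_k^{\mathcal{CO}}(i,j)=R_k^{\mathcal{CO}}(j,i)$, so up to swapping $i$ and $j$ I may assume $j\ge 2k+3$ in the inductive step. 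Second, using $\{j-k-2\}=\{(j-1)-(k+1)\}=\{j-1\}$ one verifies the algebraic identity $f(i,j)=(i-1)+f(i,j-k-1)$, and $j\ge 2k+3$ guarantees $j-k-1\ge k+2$ stays in range.

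For the lower bound I would simply invoke Lemma \ref{lem:cographinductionextremal}, which gives $R_k^{\mathcal{CO}}(i,j)\ge (i-1)+R_k^{\mathcal{CO}}(i,j-k-1)$. By the induction hypothesis $R_k^{\mathcal{CO}}(i,j-k-1)=f(i,j-k-1)$, and the identity above turns this into $R_k^{\mathcal{CO}}(i,j)\ge f(i,j)$.

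The upper bound is the heart of the matter, and I would isolate it as a sublemma: \emph{if $G$ is a disconnected cograph on $f(i,j)$ vertices with no $k$-dense $i$-set, then $\alpha_k(G)\ge j$}. Granting this, the full upper bound follows, because an arbitrary cograph $G$ on $f(i,j)$ vertices is either disconnected (apply the sublemma) or connected, in which case $\overline G$ is disconnected with $|\overline G|=f(i,j)=f(j,i)$, and the sublemma applied to $\overline G$ with the pair $(j,i)$ yields a $k$-dense $j$-set or $k$-sparse $i$-set of $\overline G$, i.e. a $k$-sparse $j$-set or $k$-dense $i$-set of $G$. To prove the sublemma, write the components as $C_1,\dots,C_t$ ($t\ge 2$) with $a_l=\alpha_k(C_l)$, so $A:=\alpha_k(G)=\sum_l a_l$ by Remark \ref{rem:ksparseunion}. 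Assume $A\le j-1$ for contradiction; then each $a_l\le j-2$. Each $C_l$ has no $k$-dense $i$-set and no $k$-sparse $(a_l+1)$-set, hence $|C_l|\le R_k^{\mathcal{CO}}(i,a_l+1)-1=f(i,a_l+1)-1$: for a ``large'' component ($|C_l|\ge k+1$, whence $a_l\ge k+1$, so $a_l+1\ge k+2$) this is the induction hypothesis, and for a ``small'' component ($|C_l|\le k$, so $a_l=|C_l|$ and $\{a_l\}=a_l$) it is the direct computation $|C_l|=a_l\le a_l\lfloor (i-1)/(k+1)\rfloor=f(i,a_l+1)-1$. Summing gives $f(i,j)=|G|\le\sum_l\big(f(i,a_l+1)-1\big)=\frac{(i-1)A-\{i-1\}\sum_l\{a_l\}}{k+1}$.

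The hard part is turning this into a contradiction, and the whole difficulty sits in the modular cross-term. Setting $\delta=(j-1)-A\ge 0$, I would write
\[
f(i,j)-\sum_l\big(f(i,a_l+1)-1\big)=1+\frac{(i-1)\delta-\{i-1\}\big(\{j-1\}-\sum_l\{a_l\}\big)}{k+1},
\]
and control the cross-term $\{i-1\}\big(\{j-1\}-\sum_l\{a_l\}\big)$ using two observations: $\sum_l\{a_l\}\ge\{A\}$ (the sum is nonnegative and congruent to $A$ mod $k+1$), and Lemma \ref{lem:cographmoduloremainder}, which gives $\{j-1\}-\{A\}\le\{(j-1)-A\}=\{\delta\}$. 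Hence $\{j-1\}-\sum_l\{a_l\}\le\{\delta\}$, so the cross-term is at most $\{i-1\}\{\delta\}\le k\{\delta\}\le k\delta$, and therefore $(i-1)\delta-\{i-1\}\big(\{j-1\}-\sum_l\{a_l\}\big)\ge (k+1)\delta-k\delta=\delta\ge 0$. Consequently the displayed difference is at least $1$, forcing $\sum_l(f(i,a_l+1)-1)<f(i,j)=|G|$, which contradicts $|G|\le\sum_l(f(i,a_l+1)-1)$. Thus $A\ge j$, proving the sublemma and, with it, the theorem. (Lemma \ref{lem:cographmoduloremainder2} plays the ancillary role of guaranteeing $f(i,j)\ge j$, which is exactly the case where $G$ is entirely $k$-sparse and so $A=|G|=f(i,j)\ge j$ is never in the assumed regime $A\le j-1$.)
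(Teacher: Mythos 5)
Your proposal is correct and follows essentially the same route as the paper's proof: strong induction on $i+j$, the lower bound via Lemma \ref{lem:cographinductionextremal} together with the identity $f(i,j)=(i-1)+f(i,j-k-1)$, and the upper bound by a componentwise analysis of a disconnected cograph (with Lemmas \ref{lem:cographmoduloremainder} and \ref{lem:cographmoduloremainder2} taming the modular cross-terms) followed by the complement trick in the connected case. The only cosmetic difference is that you sum over all components and track the slack $\delta=(j-1)-\alpha_k(G)$ explicitly via $\sum_l\{a_l\}\geq\{\alpha_k(G)\}$, whereas the paper splits off a single component $C$ and absorbs the slack by bounding $|G-C|\leq R_k^{\mathcal{CO}}(i,j-\alpha_k(C))-1$ through monotonicity; both yield the same contradiction.
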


\begin{proof}
We will prove by strong induction on $i+j$. Firstly, the lemma holds if $i=k+2$ or $j=k+2$ from Remark \ref{rem:cactuskplustwo}. Then, let us assume the lemma holds for $i,j\geq k+2$ and $i+j<t$ for some $t\geq 2k+6$, and take $i,j\geq k+3$ with $i+j=t$. Consider a cograph $G$ on $1+\dfrac{(i-1)(j-1)-\{i-1\}\{j-1\}}{k+1}$ vertices. We will prove that $G$ has either a $k$-dense $i$-set or a $k$-sparse $j$-set.

 Suppose $G$ is disconnected, take a connected component $C$ of $G$. If at least one of $C$ or $G-C$ has a $k$-dense $i$-set, then we are done. Thus, assume there is no $k$-dense $i$-set in $C$ nor in $G-C$. We note that $\alpha_k(G-C)=\alpha_k(G)-\alpha_k(C)$ by Remark \ref{rem:ksparseunion}. It follows that $|C|\leq R_{k}^{\mathcal{CO}}(i,\alpha_k(C)+1)-1$ and $|G-C|\leq R_{k}^{\mathcal{CO}}(i,\alpha_k(G)-\alpha_k(C)+1)-1$. Since $i\geq k+2$, by using induction hypothesis and Remark \ref{rem:generalsmallvaluesofiandj}, we have the following for any integer $1\leq x\leq j-1$.
  $$R_{k}^{\mathcal{CO}}(i,x)=\begin{cases}
1+\dfrac{(i-1)(x-1)-\{i-1\}\{x-1\}}{k+1}, & \text{ if } x\geq k+2 \\
x, & \text{ if } x\leq k+1.
\end{cases}$$ 
Since $i\geq k+2$, by Lemma \ref{lem:cographmoduloremainder2}, we have $R_{k}^{\mathcal{CO}}(i,x) \leq 1+\dfrac{(i-1)(x-1)-\{i-1\}\{x-1\}}{k+1}$ for any integer $x$ such that $1\leq x \leq j-1$. This implies $|C|\leq \dfrac{(i-1)\alpha_k(C)-\{i-1\}\{\alpha_k(C)\}}{k+1}$. Now, if $\alpha_k(G)\geq j$, we are done and so we can assume $\alpha_k(G)\leq j-1$. Then we have $\alpha_k(G-C)\leq j-1-\alpha_k(C)$. It follows that $|G-C|\leq R_{k}^{\mathcal{CO}}(i,j-\alpha_k(C))-1$. Then, since $i\geq k+2$, we conclude $|G-C|\leq \dfrac{(i-1)(j-1-\alpha_k(C))-\{i-1\}\{j-1-\alpha_k(C)\}}{k+1}$. Consequently, we have 
\begin{eqnarray*}
	|G|&=&|C|+|G-C|\\
	&\leq& \dfrac{(i-1)\alpha_k(C)-\{i-1\}\{\alpha_k(C)\}+(i-1)(j-1-\alpha_k(C))-\{i-1\}\{j-1-\alpha_k(C)\}}{k+1}\\
	&=& \dfrac{(i-1)(j-1)-\{i-1\}\big(\{j-1-\alpha_k(C)\}+\{\alpha_k(C)\}\big)}{k+1}.
\end{eqnarray*}
Now, since $|G|=1+\dfrac{(i-1)(j-1)-\{i-1\}\{j-1\}}{k+1}$, we see $$1+\dfrac{(i-1)(j-1)-\{i-1\}\{j-1\}}{k+1}\leq \dfrac{(i-1)(j-1)-\{i-1\}\big(\{j-1-\alpha_k(C)\}+\{\alpha_k(C)\}\big)}{k+1}$$
This implies $1\leq \dfrac{\{i-1\}\cdot\Big(\{j-1\}-\{\alpha_k(C)\}-\{j-1-\alpha_k(C)\}\Big)}{k+1}$. However, from Lemma \ref{lem:cographmoduloremainder}, we know $\{j-1\}-\{\alpha_k(C)\}\leq \{j-1-\alpha_k(C)\}$ and so $$1\leq \dfrac{\{i-1\}\cdot\Big(\{j-1\}-\{\alpha_k(C)\}-\{j-1-\alpha_k(C)\}\Big)}{k+1}\leq 0,$$which is a contradiction. As a result, we have $\alpha_k(G)\geq j$ and we are done.

So, suppose $G$ is connected. Since $G$ is a cograph, we know that $\overline{G}$ is a disconnected cograph on $1+\dfrac{(i-1)(j-1)-\{i-1\}\{j-1\}}{k+1}=1+\dfrac{(j-1)(i-1)-\{j-1\}\{i-1\}}{k+1}$ vertices. In the previous case we have proved that $\overline{G}$ has either a $k$-dense $j$-set or a $k$-sparse $i$-set, which implies $G$ has either a $k$-dense $i$-set or a $k$-sparse $j$-set.  This completes the proof of the upper bound.

\noindent Now, we need to show that $R_{k}^{\mathcal{CO}}(i,j)\geq 1+\dfrac{(i-1)(j-1)-\{i-1\}\{j-1\}}{k+1}$. If both of $i$ and $j$ are less than $2k+3$, then we reach our desired conclusion by Lemma \ref{lem:cographsmalliandj}. Therefore, assume $\max\{i,j\}\geq 2k+3$.

 If $j\geq 2k+3$, then by noting $\{j-1\}=\{j-k-2\}$, we have $R_{k}^{\mathcal{CO}}(i,j-k-1)=1+\dfrac{(i-1)(j-k-2)-\{i-1\}\{j-1\}}{k+1}$ from the induction hypothesis. Thus, from Lemma \ref{lem:cographinductionextremal}, we get $R_{k}^{\mathcal{CO}}(i,j)\geq (i-1)+ R_{k}^{\mathcal{CO}}(i,j-k-1) =1+\dfrac{(i-1)(j-1)-\{i-1\}\{j-1\}}{k+1}$.

 If $i\geq 2k+3$, then by noting $\{i-1\}=\{i-k-2\}$, we have $R_{k}^{\mathcal{CO}}(j,i-k-1)=1+\dfrac{(j-1)(i-k-2)-\{j-1\}\{i-1\}}{k+1}$ from induction hypothesis. Thus, from Lemma \ref{lem:cographinductionextremal}, we get $R_{k}^{\mathcal{CO}}(j,i)\geq (j-1)+ R_{k}^{\mathcal{CO}}(j,i-k-1) =1+\dfrac{(j-1)(i-1)-\{j-1\}\{i-1\}}{k+1}$. Since $R_{k}^{\mathcal{CO}}(i,j)=R_{k}^{\mathcal{CO}}(j,i)$, we are done. \qed

\end{proof}


\section{Defective versus classical Ramsey numbers}\label{sec:conj}

To compare defective Ramsey numbers with the classical Ramsey numbers, the authors in \cite{defectiveRamseyJohnChappell} conjectured that $R_k(k+i,k+j)-k\leq R(i,j)$ holds for all $i,j,k\geq 0$. We will examine this conjecture when restricted to graph classes studied in this paper and show that it holds in forests, cacti and cographs whereas it fails in bipartite graphs and split graphs.

\begin{proposition}
The inequality $R^{\mathcal G}_k(k+i,k+j)-k\leq R^{\mathcal G}(i,j)$ holds if ${\mathcal G}$ is the class of i) forests, ii) cacti or iii) cographs, and does not hold if ${\mathcal G}$ is the class of iv) bipartite graphs or v) split graphs.
\end{proposition}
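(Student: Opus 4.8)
The plan is to verify the three affirmative classes by substituting the closed-form formulas proved earlier and reducing each inequality to an elementary numerical estimate, and to refute the two remaining classes by exhibiting one explicit triple $(i,j,k)$ apiece. Write $R^{\mathcal G}=R_0^{\mathcal G}$ for the classical Ramsey number. One may assume $k\ge1$, since for $k=0$ the assertion is the identity $R_0^{\mathcal G}(i,j)\le R_0^{\mathcal G}(i,j)$; the degenerate ranges $\min\{i,j\}\le k+1$ and the base values in which a defective argument equals $k+2$ are handled directly by Remark \ref{rem:generalsmallvaluesofiandj} and Lemma \ref{lem:generalkplustwo}, so I would restrict attention to the generic ranges where the explicit formulas apply.

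For forests, Theorems \ref{thm:forest} and \ref{thm:forestgeneralRamsey} turn the claim into $j+\big\lfloor\frac{k+j-1}{k+1}\big\rfloor\le 2j-1$; since $\big\lfloor\frac{k+j-1}{k+1}\big\rfloor=1+\big\lfloor\frac{j-2}{k+1}\big\rfloor$, this reduces to $\big\lfloor\frac{j-2}{k+1}\big\rfloor\le j-2$, which is immediate. The cograph case is the cleanest of all: using $\{k+i-1\}=\{i-2\}$ and $\{k+j-1\}=\{j-2\}$ modulo $k+1$, Theorems \ref{thm:cographkdefective} and \ref{thm:cographgeneralRamsey} reduce the inequality, after clearing the denominator $k+1$ and simplifying, to
$$-k(i-2)(j-2)\le \{i-2\}\{j-2\},$$
which holds trivially because its left-hand side is nonpositive and its right-hand side nonnegative for $i,j\ge3$.

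For cacti I would split into the four subcases given by $k=1$ versus $k\ge2$ and $i=3$ versus $i\ge4$. When $i\ge4$, the left-hand side is at most $2j$ (from the parity formula of Theorem \ref{thm:cactus1sparse} when $k=1$, and from $\big\lceil\frac{j-1}{k}\big\rceil\le j-1$ in Theorem \ref{thm:cactus} when $k\ge2$), which is at most $3j-2=R_0^{\mathcal{CA}}(i,j)$ for $j\ge2$. When $i=3$, the monotonicity of Remark \ref{rem:defectiveinequalityforiandj} together with the exact value $R_1^{\mathcal{CA}}(4,\cdot)$ and Theorem \ref{thm:cactus} bounds the left-hand side by $2j-1\le\big\lfloor\frac{5(j-1)}{2}\big\rfloor+1=R_0^{\mathcal{CA}}(3,j)$, again for $j\ge2$. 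Crucially, only an upper bound on the defective number is ever needed, so the still-open values $R_k^{\mathcal{CA}}(k+3,\cdot)$ cause no difficulty. The main obstacle in the affirmative direction is precisely this bookkeeping for cacti---keeping the $k=1$ parity, the $i=3$ versus $i\ge4$ dichotomy, and the open values mutually consistent---but every subcase collapses to a floor/ceiling inequality that is slack for $j\ge2$.

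For the two negative classes a single instance suffices, and in each the failure is structural: in these classes the defective Ramsey number grows at least as fast as the classical one and carries a surplus that the uniform correction $-k$ cannot remove. For bipartite graphs take $k=2$, $i=5$, $j=3$: then $k+i=7\ge2k+3$ and $k+j=5\ge2k+1$, so Theorem \ref{thm:bipartitelargevalues} gives $R_2^{\mathcal{BIP}}(7,5)=9$, whence $R_2^{\mathcal{BIP}}(7,5)-2=7>5=R_0^{\mathcal{BIP}}(5,3)$. For split graphs take $k=2$, $i=j=4$: since $k+3\le6\le2k+2$, Theorem \ref{thm:splitdiagonal} gives $R_2^{\mathcal{SP}}(6,6)=3\cdot6-2\cdot2-4=10$, whence $R_2^{\mathcal{SP}}(6,6)-2=8>7=R_0^{\mathcal{SP}}(4,4)$.
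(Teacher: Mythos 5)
Your proof is correct, and its skeleton is the same as the paper's: substitute the closed-form defective formulas and reduce the three positive classes to elementary floor/ceiling inequalities, then refute the bipartite and split cases by explicit instances. The two write-ups differ in instructive ways, though. (i) Cacti: the paper's printed proof invokes only Theorem \ref{thm:cactus}, so it covers only $k\geq 2$ and shifted first argument $k+i\geq k+4$; you additionally treat $k=1$ via the parity formula of Theorem \ref{thm:cactus1sparse}, and the case $i=3$ via Remark \ref{rem:defectiveinequalityforiandj} together with the exact value $R_1^{\mathcal{CA}}(4,\cdot)$, correctly observing that only an \emph{upper} bound on the defective number is ever needed, so the open values $R_k^{\mathcal{CA}}(k+3,\cdot)$ cause no trouble. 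In this respect your case analysis is strictly more complete than what the paper prints. (ii) Counterexamples: the paper disproves the bipartite inequality with $k=1$ and $j\geq 20$, computing $R_{1}^{\mathcal{BIP}}(1+i,1+j)-1=2j=R^{\mathcal{BIP}}(i,j)+1$ (citing Theorem \ref{thm:bipartite1defective}), and the split inequality with the whole family $i,j\geq 2k+3$ via Corollary \ref{cor:splitsufficientlylarge}, where the surplus is exactly $k$; your single small instances $R_2^{\mathcal{BIP}}(7,5)-2=7>5=R^{\mathcal{BIP}}(5,3)$ (Theorem \ref{thm:bipartitelargevalues}) and $R_2^{\mathcal{SP}}(6,6)-2=8>7=R^{\mathcal{SP}}(4,4)$ (Theorem \ref{thm:splitdiagonal}) are equally valid, and they have the minor advantage of not leaning on the heavy extremal constructions behind the $1$-defective bipartite numbers. (iii) Forests and cographs: your reductions are exact identities, namely $\lfloor(j-2)/(k+1)\rfloor\leq j-2$ and $-k(i-2)(j-2)\leq\{i-2\}\{j-2\}$, whereas the paper runs one-sided estimate chains; the substance is identical (both ultimately rest on $(i-2)(j-2)\geq 0$ in the cograph case), but your forest reduction also covers $j=2$, where the paper's final estimate $(3j+1)/2\leq 2j-1$ requires $j\geq 3$.
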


\begin{proof}

i) For forests, we have $R_{k}^{\mathcal{FO}}(i,j)=j+\Big\lfloor\dfrac{j-1}{k+1}\Big\rfloor$ for $i\geq k+3$ and $j\geq k+2$ from Theorem \ref{thm:forest}. Thus, we get $R_{k}^{\mathcal{FO}}(k+i,k+j)-k=j+\Big\lfloor\dfrac{k+j-1}{k+1}\Big\rfloor$. Observe that $k\geq 1$ and $j\geq 3$ imply 
\begin{eqnarray*}
	j+\Big\lfloor\dfrac{k+j-1}{k+1}\Big\rfloor &\leq& j+1+\Big\lfloor\dfrac{j-1}{k+1}\Big\rfloor \\
	&\leq& j+1+\dfrac{j-1}{2}\\
	&=&\dfrac{3j+1}{2}\leq 2j-1 = R^{\mathcal{FO}}(i,j)
\end{eqnarray*}
where the last equality comes from Theorem \ref{thm:forestgeneralRamsey}.

ii) In cacti, we have $R_{k}^{\mathcal{CA}}(i,j)=\Big\lceil\dfrac{(k+1)(j-1)}{k}\Big\rceil$ for $i\geq k+4$ and $j,k\geq 2$ from Theorem \ref{thm:cactus}. Thus, by using $k\geq 2$, we get $$R_{k}^{\mathcal{CA}}(k+i,k+j)-k=1+\Big\lceil\dfrac{(k+1)(j-1)}{k}\Big\rceil\leq 1+3(j-1)=R^{\mathcal{CA}}(i,j)$$ from Theorem \ref{thm:cactusgeneralRamsey}. 

iii) For cographs, recall from Theorem \ref{thm:cographkdefective} that $R_{k}^{\mathcal{CO}}(i,j) = 1+\dfrac{(i-1)(j-1)-\{i-1\}\{j-1\}}{k+1}$ for $i,j\geq k+2$ with the notation used in Section \ref{sec:cographs}. Also, we have $\{x\}\geq 0$ by definition, and $i,j\geq 2$ implies $(i-2)(j-2)\geq 0$ and so $(i-1)(j-1)\geq i+j-3$. Thus,

\begin{eqnarray*}
R_{k}^{\mathcal{CO}}(k+i,k+j)-k&=& 1+\dfrac{(i+k-1)(j+k-1)-\{i+k-1\}\{j+k-1\}}{k+1}-k\\
&\leq& 1+\dfrac{(i+k-1)(j+k-1)}{k+1}-k\\
&=&1+\dfrac{k^2+k(i-1)+k(j-1)}{k+1}+\dfrac{(i-1)(j-1)}{k+1}-k\\
&=&1+\dfrac{k(i+j-3)}{k+1}+\dfrac{(i-1)(j-1)}{k+1}\\
&\leq&1+\dfrac{k(i-1)(j-1)}{k+1}+\dfrac{(i-1)(j-1)}{k+1}\\
&=&1+(i-1)(j-1)= R^{\mathcal{CO}}(i,j)
\end{eqnarray*}
\noindent where the last equality comes from Theorem \ref{thm:cographgeneralRamsey}. 

iv) In bipartite graphs, we have $R_{1}^{\mathcal{BIP}}(1+i,1+j)-1=2j=1+R^{\mathcal{BIP}}(i,j)$ from Theorem \ref{thm:bipartite1defective} and Theorem \ref{thm:bipartitegeneralRamsey} for $i\geq 4$ and $j\geq 20$.

v) In split graphs, we have $R_{k}^{\mathcal{SP}}(i,j)=i+j-1=R^{\mathcal{SP}}(i,j)$ for $i,j\geq 2k+3$ from Corollary \ref{cor:splitsufficientlylarge}. Then, we conclude $R_{k}^{\mathcal{SP}}(k+i,k+j)-k=R_{k}^{\mathcal{SP}}(i,j)+k=R^{\mathcal{SP}}(i,j)+k>R^{\mathcal{SP}}(i,j)$ for all $i,j\geq 2k+3$.
\end{proof}

\section{Conclusion}\label{sec:conclusion}

In this paper, we considered the computation of defective Ramsey numbers in various graph classes, namely forests ($\mathcal{FO}$), cacti ($\mathcal{CA}$), bipartite graphs ($\mathcal{BIP}$), split graphs ($\mathcal{SP}$) and cographs ($\mathcal{CO}$). Obtained results, conjectures and open questions mentioned in previous sections are summarized in Table \ref{table:summary}. 



\begin{table}[ht]
\large
\scalebox{0.5}{
		\begin{tabular}{||D||C||C||C||C||C||}
			\hline\hline
			 & \text{Conditions on $i$ and $j$}
			& $$k=1$$ & $$k=2$$ & $$k=3$$ & $$k\geq4$$ \\ 
			\hline
			$$R_k^{\mathcal{FO}}(i,j)$$ & \text{for all $i$ and $j$} &\multicolumn{4}{|C||}{j+\Big\lfloor \dfrac{j-1}{k+1}\Big\rfloor} \\ [-5 ex] 
			\hline
			
			\multirow{3}{*}{$R_k^{\mathcal{CA}}(i,j)$} & i=k+3 \text{ and } j\leq 2k+1 & \multicolumn{4}{|C||}{j-1+\Big\lceil\dfrac{j-1}{k}\Big\rceil} \\
			\cline{2-3} \cline{4-6}
			& i=k+3 \text{ and } j\geq 2k+2  & \multicolumn{3}{|C||}{j-1+\Big\lceil\dfrac{j-1}{k}\Big\rceil} & \text{OPEN} \\ [1.2ex]
			\cline{2-6}
			& i\geq k+4 & 
			\begin{cases} 2j-2 & \text{if $j$ is even}\\
				2j-1 & \text{if $j$ is odd}\end{cases} & \multicolumn{3}{|C||}{{j-1+\Big\lceil\dfrac{j-1}{k}\Big\rceil}} \\ 
			\hline
			
			\multirow{5}{*}{$R_k^{\mathcal{BIP}}(i,j)$} & \shortstack[l]{\quad\,\,$i=4$ \\ $j \in \{4,5,6 \}$ } & 2j-3 & \multicolumn{3}{|C||}{\multirow{4}{*}{ \shortstack[l]{ For $i\geq 2k+3$, we have \\
			$R_{k}^{\mathcal{BIP}}(i,j)=\begin{cases}
			2j-1-k, & \text{if } k+2\leq j\leq 2k.\\
			2j-1, & \text{if } j\geq 2k+1.
			\end{cases}$ \\ \\ \\ \\ 
			\vspace{1cm}
			\\
			\text{OPEN for} $2k+2\geq i\geq k+3$ \text{and} $j\geq k+2$.   } }} \\
			\cline{2-3}
			& \shortstack[l]{\,\,\,\,$i=4$ \\ $j \in \{3,7\}$ } & 2j-2 & \multicolumn{3}{|C||}{} \\
			\cline{2-3}
			& \shortstack[l]{\qquad\quad\,\,$i=4$ \\ $j \in \{10,11,12,18,19 \}$ } & \textbf{CONJ: }2j-1 & \multicolumn{3}{|C||}{}   \\
			\cline{2-3}
			&\shortstack[l]{\qquad\qquad\,\,$i=4$ \\ $j \in \{8,9,13,14,15,16,17 \}$ } & 2j-1 & \multicolumn{3}{|C||}{}   \\
			\cline{2-3}
			& i=4,\,j\geq 20 \text{ or } i\geq 5  & 2j-1 & \multicolumn{3}{|C||}{}   \\
			\hline
			
			\multirow{3}{*}{$R_k^{\mathcal{SP}}(i,j)$} & (i-k-2)(j-k-2)\geq (k+1)^2 & \multicolumn{4}{|C||}{i+j-1} \\ 
			\cline{2-6}
			& \shortstack[l]{$(i-k-2)(j-k-2)<(k+1)^2$ \\ \qquad \qquad \text{ and } $i=j$ }  & \multicolumn{4}{|C||}{3i-2k-4} \\
			\cline{2-6}
			& \shortstack[l]{$(i-k-2)(j-k-2)<(k+1)^2$ \\ \qquad \qquad \text{ and } $i\neq j$ } &\multicolumn{2}{|C||}{i+j-1-\Big\lceil\dfrac{(k+1)^2-(i-k-2)(j-k-2)}{\min\{i,j\}}\Big\rceil}  &\multicolumn{2}{C||}{\textbf{CONJ: } i+j-1-\Big\lceil\dfrac{(k+1)^2-(i-k-2)(j-k-2)}{\min\{i,j\}}\Big\rceil  } \\ 
			\hline
			
			$$R_k^{\mathcal{CO}}(i,j)$$ &\text{for all $i$ and $j$ } &\multicolumn{4}{|C||}{1+\dfrac{(i-1)(j-1)-\{i-1\}\{j-1\}}{k+1}} \\  
			\hline \hline
		\end{tabular} }
\caption{Summary of the results obtained in this paper and open questions.}\label{table:summary}
\end{table}

\noindent Apart from the conjectures and open questions formulated in this paper, one can study other graph classes from the same perspective. Whenever we are not likely to obtain a general formula in some graph class, one can also address the computation of small defective Ramsey numbers using efficient enumeration algorithms. Such a study has been initiated in \cite{1defectiveperfectTinazOylumJohn} for perfect graphs (denoted by $\mathcal{PG}$) for the computation of $R_1^{\mathcal{PG}}(5,5)$; further defective Ramsey numbers in $\mathcal{PG}$ for $k\geq 2$ can be considered in the same manner. Bipartite graphs and chordal graphs are other candidate graph classes for which efficient enumeration algorithms are likely to provide some small defective Ramsey numbers.


\end{document}